\numberwithin{equation}{section}
\numberwithin{equation}{section}
\def\R{\mathbb R}
\def\Z{\mathbb Z}
\def\P{\mathcal P}
\def\N{\mathbb N}
\def\CN{\mathcal N}
\def\E{\mathbb E}
\def\L{\log}
\def\LL{\log\log}
\def\LLL{\log\log\log}
\def\LLLL{\log\log\log\log}
\def\A{\mathcal A}
\def\B{\mathcal B}
\def\X{\mathcal X}
\def\Y{\mathcal Y}
\def\Q{\mathbb Q}
\def\ee{\varepsilon}
\def\gcd{\operatorname{gcd}}
\DeclareMathOperator\vol{Vol}
\DeclarePairedDelimiter\ceil{\lceil}{\rceil}
\DeclarePairedDelimiter\floor{\lfloor}{\rfloor}
\newcommand{\Mod}[1]{\;(\mathrm{mod}\;#1)}
\newtheorem{theorem}{Theorem}[section]
\newtheorem{lemma}[theorem]{Lemma}
\newtheorem{proposition}[theorem]{Proposition}
\newtheorem{corollary}[theorem]{Corollary}
\newtheorem{conjecture}[theorem]{Conjecture}
\theoremstyle{remark}
\newtheorem*{remark}{Remark}
\theoremstyle{definition}
\newtheorem{definition}[theorem]{Definition}
\theoremstyle{remark}
\numberwithin{equation}{section}
\newcommand{\upperz}{2}
\newcommand{\Lpower}{2}
\begin{document}
\begin{frontmatter}[classification=text]

\title{On Product Sets of Arithmetic Progressions} 


\author[maxxu]{Max Wenqiang Xu\thanks{Supported by the Cuthbert C. Hurd Graduate Fellowship in Mathematics, Stanford}}
\author[yunkunzhou]{Yunkun Zhou\thanks{Supported by NSF GRFP Grant DGE-1656518}}

 	\begin{abstract}
We prove that the size of the product set of any finite arithmetic progression $\mathcal{A}\subset \mathbb{Z}$ satisfies
	\[|\mathcal A \cdot \mathcal A| \ge \frac{|\mathcal A|^2}{(\log |\mathcal A|)^{2\theta +o(1)} } ,\]
where $2\theta=1-(1+\log\log 2)/(\log 2)$ is the constant appearing in the celebrated Erd\H{o}s multiplication table problem. This confirms a conjecture of Elekes and Ruzsa from about two decades ago.
		
If instead $\mathcal{A}$ is relaxed to be a subset of a finite arithmetic progression in integers with positive constant density, we prove that
\[|\mathcal A \cdot \mathcal A | \ge \frac{|\mathcal A|^{2}}{(\log |\mathcal A|)^{2\log 2- 1 + o(1)}}. \]
This solves the typical case of another conjecture of Elekes and Ruzsa on the size of the product set of a set $\mathcal{A}$ whose sumset is of size  $O(|\mathcal{A}|)$. 

Our bounds are sharp up to the $o(1)$ term in the exponents. We further prove asymmetric extensions of the above results. 
	\end{abstract}
\end{frontmatter}

\section{Introduction}

	The celebrated Erd\H{o}s multiplication table problem asks to estimate how many numbers can be represented as the product of two positive integers which are at most $N$. It is a classical result \cite{Erdos55} that there are $o(N^{2})$ such numbers, which can be proved by simply considering the typical number of prime factors of an integer $n\le N$. Erd\H{o}s \cite{Erd60} first determined the answer up to a $(\log N)^{o(1)}$ factor:  
	\begin{equation}\label{eqn: delta}
	    \Big|[N] \cdot [N]\Big| = \frac{N^{2}}{(\log N)^{2\theta+o(1)}},	\end{equation} 
	where $[N]=\{1,\ldots,N\}$, $\theta = 1- \frac{1+\log \log 4}{\log 4}$ (which implies that $2\theta=1-\frac{1+\log\log 2}{\log 2}$), and $\A\cdot \A$ is defined to be $\{aa': a, a'\in \A\}$ for any set $
	    \A$. 
	The finer-order term was improved later by Tenenbaum \cite{Tenenbaum}. Remarkably, Ford \cite{Ford08} has determined the quantity up to a constant factor:
	\begin{equation}\label{eqn: ford}
	     \Big|[N] \cdot [N]\Big| \asymp \frac{N^{2}}{(\log N)^{2\theta}(\log \log N)^{3/2}}. 
	\end{equation}
	There is also work on generalizations of the Erd\H{o}s multiplication table problem to higher dimensions: see \cite{ Kou14} for references.

In this paper, we are interested the same problem but with $[N]$ replaced by an arbitrary arithmetic progression $\A$ of $N$ integers. In some cases the size of the product set can be as large as the trivial upper bound $\binom{N+1}{2}$, which is of order $N^2$. One example that achieves this bound is $\{1+kd:k\in [N]\}$ where $d > 2N$. It is interesting to know whether the size of the product set of any arithmetic progression with given length $N$ can be substantially smaller than the bound in \eqref{eqn: delta}. Elekes and Ruzsa \cite{ElekesRuzsa} conjectured that it cannot. In this paper, we prove their conjecture.


\begin{theorem}\label{cor:  AA 86}
Let $\A\subseteq\Z$ be a finite arithmetic progression and $\theta = 1- \frac{1+\log \log 4}{\log 4}$. Then 
\[|\A \cdot \A| \geq  |\mathcal A|^2(\log |\mathcal A|)^{-2\theta- o(1)}  . \]
\end{theorem}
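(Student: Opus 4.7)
The plan is to reduce to the case that $\A$ sits in $[1, N^{O(1)}]$, and then to bound, for a typical product $m \in \A \cdot \A$, the multiplicity $r(m) := \#\{(n_1, n_2) \in \A^2 : n_1 n_2 = m\}$ by $(\log N)^{2\theta + o(1)}$ via an adaptation of Ford's work on divisors in short intervals to arithmetic progressions. Since $\sum_m r(m) = N^2$, such a bound on $r$ over the typical pairs forces $|\A \cdot \A| \ge N^2(\log N)^{-2\theta - o(1)}$.

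Writing $\A = \{a + kd : 0 \le k < N\}$ and dividing by $\gcd(a, d)$ (which preserves $|\A \cdot \A|$ and the length $N$), I would assume $\gcd(a, d) = 1$ and $a, d \ge 0$, handling signs by a cheap case analysis since at least half of $\A$ lies on one side. The identity
\[ (a + k_1 d)(a + k_2 d) = a^2 + (k_1 + k_2)\, ad + k_1 k_2\, d^2 \]
produces two easy cases. When $d > 2N$, the coprimality $\gcd(a, d) = 1$ combined with reduction modulo $d^2$ recovers $k_1 + k_2$ and hence $k_1 k_2$ from the product; when $a \ge N^2 d$, the three monomial pieces lie in well-separated numerical ranges and can be extracted by floor division. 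In either case $|\A \cdot \A| \ge \binom{N+1}{2}$, so it remains to handle $d \le 2N$ and $a \le N^2 d$, where $\A \subseteq [1, O(N^3)]$.

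For the main case, I would follow the Hardy--Ramanujan--Erd\H{o}s--Ford philosophy. Pick a threshold $y = N^{o(1)}$ and set $\omega_y^{(d)}(n) = \#\{\ell \text{ prime}: \ell \le y,\ \ell \nmid d,\ \ell \mid n\}$. A Turán--Kubilius inequality applied to the AP $a \pmod d$ shows that $\omega_y^{(d)}(n)$ concentrates around $M_y := \sum_{\ell \le y,\ \ell \nmid d} 1/\ell$ on all but $o(N)$ of the $n \in \A$; with $y$ chosen appropriately one can arrange $M_y = (1+o(1)) \log\log N$. For a typical pair $(n_1, n_2) \in \A^2$, the product $m = n_1 n_2$ then has $\omega_y^{(d)}(m) \approx 2 \log\log N$. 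Now $r(m)$ is at most twice the number of divisors of $m$ lying in $[a, a + (N-1)d]$ and congruent to $a \pmod d$, and an adaptation of Ford's estimates for $H(x, y, z)$ (the count of integers $\le x$ with a divisor in $(y, z]$) to divisors in arithmetic progressions should give $r(m) \le (\log N)^{2\theta + o(1)}$ on typical $m$, completing the proof.

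The hard part is this divisor-in-AP bound uniformly in $d$. When $d$ has many small prime factors (e.g.\ a primorial), elements of $\A$ are rough integers whose $\omega$-statistics are shifted, $M_y$ is depressed, and the Hardy--Ramanujan heuristic underlying Ford's work must be recalibrated; I expect this to need either a case split on the small-prime content of $d$ or a Turán--Kubilius estimate fully uniform in $d$. Propagating the congruence $t \equiv a \pmod d$ through the Ford machinery is a further technical obstacle: a natural device is to decompose each candidate divisor $t$ into its $d$-coprime part and its part built from primes dividing $d$, reducing the congruence to a discrete constraint on one factor while Ford's counting argument handles the other.
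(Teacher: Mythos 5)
Your reduction steps at the beginning are in the right spirit (and your algebraic tricks for $d > 2N$ and $a \ge N^2 d$ are correct), but they are much weaker than what the paper actually needs: the paper's Lemma~3.2 shows by a direct counting argument that $E_\times(\A) \le 2|\A|^2 + 4L^3 a^{-1}(1 + \log L)$, which disposes of the case $a \gg L\log L$, leaving a far narrower range $dL \le a \ll L\log L$ for the analytic work, whereas your threshold $a \ge N^2 d$ leaves essentially the whole problem open.

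The genuine gap, though, is in your ``main case.'' You want to show that for a typical \emph{pair} $(n_1,n_2)\in\A^2$ one has $r(n_1n_2) \le (\log N)^{2\theta + o(1)}$ and then divide. This claim is in fact false. Already for $\A = [N]$ one has $E_\times([N]) = \sum_m r(m)^2 \asymp N^2\log N$ (parametrize quadruples $n_1n_2=n_3n_4$ by $(ga,cb,gb,ca)$ with $\gcd(a,b)=1$ and sum), so the mean of $r(n_1n_2)$ over random pairs is $\asymp \log N$, vastly larger than $(\log N)^{2\theta}$ since $2\theta \approx 0.086$. In other words, the size-biased distribution of $r$ is heavy; ``typical product'' (uniform over $\A\cdot\A$) and ``typical pair'' are not interchangeable, and your sentence ``Since $\sum_m r(m) = N^2$, such a bound forces\dots'' requires the pair statement, not the product statement. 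A Tur\'an--Kubilius restriction to $\omega(n) \approx \log\log N$ keeps $(1-o(1))N$ elements and does not fix this: with that restriction one can only hope for $E_\times(\A')\ll |\A|^2 (\log|\A|)^{2\log 2-1+o(1)}$ (this is exactly the paper's Theorem~1.7, the dense-subset case), which would give the weaker exponent $2\log 2 - 1 \approx 0.386$, not $2\theta$.

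To reach $2\theta$ one must discard much more than the Hardy--Ramanujan outliers. The paper's mechanism (Theorem~1.6 and Proposition~5.4) restricts to the set $\CN_k(\A;\log 4,\beta)$ of square-free $n\in\A$ with \emph{exactly} $k = \lfloor \frac{\log\log L}{\log 4}\rfloor + O(\sqrt{\log\log L})$ prime factors \emph{and} the Smirnov-type size condition $\log\log p_j(n) \ge (\log 4)j - \beta$ for all $j$. This subset has density only $(\log N)^{-\theta - o(1)}$ in $\A$, and that loss is precisely what is traded for the optimal energy bound $E_\times(\A')\ll |\A'|^2$ (Propositions~6.1 and~6.3 via a random refinement). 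Your sketch never chooses the non-obvious value $k\approx \frac{\log\log N}{\log 4}$ (the Erd\H{o}s optimization), nor imposes the Smirnov condition, so it cannot reach $2\theta$. Finally, the analytic tool you would really need is not Ford's $H(x,y,z)$ estimates (which count integers possessing a divisor in an interval) but Shiu's theorem for short sums of multiplicative functions in APs, which is what the paper uses to control $\sum 2^{-\omega(n)}$ over the residue class $a\bmod d$ uniformly in $d$; this sidesteps the $d$-uniformity issues you flag at the end.
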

The strongest lower bound we can prove is $|\mathcal A|^2(\log |\mathcal A|)^{-2\theta} (\log \log |\A|)^{-7-o(1)}$ (Theorem~\ref{thm: best}), which is sharp up to a power of $\log \log |A|$.

We also prove an asymmetric version of the conjecture, which strengthens Theorem~\ref{cor:  AA 86}.

\begin{theorem}\label{cor:  AB 86}
Let $\A, \B \subseteq\Z$ be two finite arithmetic progressions with lengths $2 \leq |\B|\le |\A|$ and $\theta = 1- \frac{1+\log \log 4}{\log 4}$. Then
\[|\A \cdot \B| \geq  {|\A||\B|}{(\log |\A|)^{-\theta - o(1)}(\log |\B|)^{-\theta}     } , \]
where the $o(1)$ terms go to zero as $|\A|$ tends to infinity.
\end{theorem}

\begin{remark}
The two theorems above are sharp up to a $(\log |\A|)^{o(1)}$ factor. The proof we present gives the error term in the form $\exp(O(\sqrt{\log \log |\A|}\LLL |\A|))$. In fact, a stronger bound up to a $(\log \log |\A|)^{O(1)}$ factor can be obtained. We leave details of the proof to the stronger error term to Appendix \ref{sec:Smirnov-stronger}. 
\end{remark}


A particular structural property of arithmetic progressions $\A$  is that it has small sumset. The celebrated sum-product conjecture \cite{ES83} states that for any $\A \subseteq \Z$, 
\begin{equation}\label{eqn: sum product}
    \max\{|\A+\A|, |\A\cdot \A|\} \gg   |\A|^{2-o(1)}, 
\end{equation}
where $\A+\A = \{a+a': a, a' \in \A\}$ and $\ll$ is to denote up to a multiplicative constant.
There has been much progress towards this conjecture (see \cite{RS20} for the current record), though it is still wide open. One extremal case of the sum-product conjecture is the case where one of the sumset and product set is very small, in which, the conjecture is known to be true. Indeed, 
Chang \cite{Chang} proved that 
\[|\A \cdot \A|\ll |\A| \implies  |\A+\A|\gg  |\A|^{2}, \]
which is sharp up to a multiplicative constant factor. See \cite{MRSS, RSS2020} for discussions in other settings. 

We are interested in the other extremal case where $|\A + \A|/|\A|$ is bounded, which seems to be much more challenging. In contrast to Chang's result \cite{Chang}, even if the doubling number (i.e. $|\A+\A|/|\A|$) is at most $2$, the product set can still be smaller than $|\A|^{2}$ by a polylogarithmic factor (see \eqref{eqn: ford}). 
Nathanson and Tenenbaum proved \cite{NT99} that if $|\A+\A|<3|\A|-4$, then $|\A\cdot \A|\gg |\A|^{2}/(\log |\A|)^{2}$. 
This result was later generalized by Chang \cite{Chang-unpub} to $h$-fold product sets. Finally, Elekes and Ruzsa \cite{ElekesRuzsa} proposed the following conjecture.  

\begin{conjecture}[Elekes and Ruzsa \cite{ElekesRuzsa}]\label{conj: ER}
Let $\A \subseteq\Z$ be a finite set. Then the following holds. 
\[ |\A +\A| \ll |\A| \implies |\A \cdot \A| \ge  |\A|^2(\log |\A|)^{1-2\log 2 - o(1)} . \]
\end{conjecture}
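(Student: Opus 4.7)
The plan is to combine Freiman's theorem with an ``anatomy of integers'' argument in the style of Ford and Tenenbaum. First, I would apply the quantitative Freiman theorem to the hypothesis $|\A+\A|\ll|\A|$, placing $\A$ inside a generalized arithmetic progression $P$ of bounded dimension $d$ with $|P|\ll|\A|$. This reduces the problem to bounding $|\A\cdot\A|$ for a dense subset $\A$ of such a $P$.

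The \emph{typical case} is $d=1$, i.e.\ $P$ is an ordinary AP. Factoring out the $\gcd$ of the first term and the common difference and passing to an appropriate coset, this reduces essentially to $\A\subseteq[N]$ with $|\A|\ge\delta N$ for some constant $\delta>0$; this is precisely the setting resolved by the second main result announced in this paper's abstract. The natural object to control is the multiplicative energy
\[E^{\times}(\A) = \sum_n r_\A(n)^2, \qquad r_\A(n) = |\{(a,b)\in\A^2 : ab=n\}|,\]
together with the restricted divisor count $\tau_N(n)=|\{d\mid n:\max(1,n/N)\le d\le N\}|$. By Cauchy--Schwarz $|\A\cdot\A|\ge|\A|^4/E^{\times}(\A)$, so the conjectural bound reduces to showing
\[E^{\times}(\A)\le |\A|^2(\log|\A|)^{2\log 2-1+o(1)}.\]
Following the Ford--Tenenbaum philosophy, I would stratify $\A$ by $\omega(a)$, discard atypical $\omega$-values via Hardy--Ramanujan, and for the typical range use normal concentration of $\log d$ over divisors of $n$ to count divisor pairs $(d,n/d)$ both of which can lie in $\A$. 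The exponent $2\log 2-1$ then emerges from each side of a divisor pair independently contributing a Ford--Hall--Tenenbaum factor of $(\log N)^{\log 2 - 1/2}$.

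The main obstacle is controlling $E^{\times}(\A)$ for \emph{adversarial} $\A$: for a random density-$\delta$ subset the energy concentrates around its mean, but the conjecture demands a uniform bound, so one has to rule out adversaries that cluster on smooth numbers, on numbers sharing a fixed large divisor, or on any multiplicatively structured sub-family. A careful level-set decomposition together with Erd\H{o}s-style inequalities for divisors in short intervals should power this step and complete the 1D case. The step I cannot carry out with the tools above is the passage from $d=1$ to higher-dimensional $P$: when the Freiman GAP has dimension $\ge 2$, elements of $\A$ no longer admit a clean divisor-interval interpretation and no analogue of Ford's theorem is available for intervals inside a multi-dimensional box. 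This is exactly the non-typical part of the Elekes--Ruzsa conjecture that the paper leaves open, and I expect resolving it would require a genuinely new multiplicative tool adapted to lattice points in a box rather than to integers.
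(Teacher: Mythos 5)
The displayed statement is Conjecture~\ref{conj: ER}, which the paper does \emph{not} prove in full: it is resolved only when $\A$ is a dense subset of an ordinary arithmetic progression (Theorem~\ref{cor: AA2log2-1}) or when the difference-set doubling is below $4$ (Corollary~\ref{cor: 2log2-1: doubling 4}). You correctly identify that the higher-dimensional GAP case is the genuine open obstruction and that the paper does not address it, so at the level of scope your assessment matches the paper's.

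Within the $d=1$ case, however, there is a real gap. You assert that after dividing by $\gcd(a,d)$ and ``passing to an appropriate coset'' the problem ``reduces essentially to $\A\subseteq[N]$ with $|\A|\ge\delta N$.'' That reduction does not exist, and the paper's main novelty is precisely that it cannot be made. For $\P=\{a+id:0\le i<L\}$ with $\gcd(a,d)=1$, the elements lie in the window $[a,a+dL)$ and in a fixed residue class mod $d$; no dilation or translation takes this to an initial segment $[N]$, and the relevant divisor-in-interval statistics are different. The paper dispatches the regime $a\gg L\log L$ by an elementary lattice-counting argument (Lemma~\ref{lem:large-a}, Corollary~\ref{cor:large-a}), and in the complementary range $dL\le a<L\log L$ it works directly in the coset, applying Shiu's theorem (Theorem~\ref{lem:Shiu}) to the multiplicative function $2^{-\omega}$ over short intervals of a residue class, after parameterizing energy quadruples by tuples $(x_1,x_2;y_1,y_2)$ with each $x_iy_j\in\A'$ and using $\omega(x_i)+\omega(y_j)\le T$ for all four products to justify inserting the weight $2^{-\omega}$. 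Your intuition that $2\log2-1$ emerges from $\omega$-concentration is exactly right, but the mechanism is not an independent Ford--Hall--Tenenbaum factor per side of a divisor pair; it is Shiu's theorem in arithmetic progressions, which is the AP-specific tool your outline omits. The special case $\P=[N]$ that you propose to reduce to was already handled by Pomerance--S\'ark\"ozy \cite{PS90}; the content of Theorem~\ref{cor: AA2log2-1} is exactly the part that your reduction step tries to skip.
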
 Elekes and Ruzsa \cite{ElekesRuzsa} showed that if $|\A+\A|\ll |\A|$, then $|\A\cdot \A| \gg |\A|^{2}(\log |\A|)^{-1}$ when $\A$ is a finite set of integers.
As a corollary of Solymosi's remarkable result \cite{Solymosi09}, the same implication holds when $\A$ is a finite set of reals. We remark that their proofs did not use specific properties of the integers, and we improve their bounds by taking the arithmetic information of integers into account. Indeed, the $2\log 2-1$ exponent has a natural arithmetic interpretation. By the Sathe-Selberg formula \cite{Selberg}, the number of $n\le N$ with $\Omega(n) = (2+o(1))\log \log N$ is about $N/(\log N)^{2\log 2 -1 + o(1)}$, where $\Omega(n)$ is the number of prime factors of $n$. Further we claim that the bound in Conjecture~\ref{conj: ER} is optimal up to $o(1)$ factor in the exponent. One simple example is to take $\A \subset [N]$ containing set of all numbers $n$ with $\Omega(n) = (1+o(1))\log \log N$. By the Hardy-Ramanujan theorem one has $|\A|= (1+o(1))N$. Since $A \cdot A$ is contained in the set of integers with $\Omega(n) = (2+o(1))\log \log N$, combining this with the previously mentioned application of Sathe-Selberg's formula, the claim follows.

We believe that to get the conjectured exponent $2\log 2 -1$, one has to take the arithmetic information into account. We also remark that for similar arithmetic reasons, the constant $2\log 2-1$ appears in recent work \cite{dMT, Ma, SoundXu}.

By the Freiman-Ruzsa theorem \cite{Freiman73, Ruzsa94}, every set with constant doubling number is a dense subset of a generalized arithmetic progression of constant dimension. A natural case is that $\A$ is a dense subset of an arithmetic progression. In fact, in some sense this is also the typical case \cite{camp20, campos2021typical}. 
In this paper, we prove that the Elekes-Ruzsa conjecture is true in this case.

\begin{theorem}\label{cor: AA2log2-1}
Let $\delta \in (0,1]$ and  $\A$ be a subset of an arithmetic progression $\P \subseteq\Z$ with $|\A| \geq \delta |\P|$, then
\[|\A \cdot \A| \gg_{\delta} |\A|^{2}(\log |\A|)^{1-2\log 2 - o(1)}.\]
\end{theorem}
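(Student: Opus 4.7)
The plan is to reduce to the case $\A \subseteq [N]$ a dense subset of an interval, and then bound the multiplicative energy of an appropriately restricted subset.

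For the reduction, write $\P = \{a_0 + kd : 0 \leq k < M\}$ and let $g = \gcd(a_0, d)$. Setting $\A' = \A/g$, we have $|\A' \cdot \A'| = |\A \cdot \A|$ and $\gcd(a_0/g, d/g) = 1$, so we may assume $\gcd(a_0, d) = 1$. For $d = 1$, $\A$ is a dense subset of an interval. For $d \geq 2$, the elements of $\P$ lie in a reduced residue class modulo $d$; here I would use Sathe--Selberg-type counts in arithmetic progressions (together with the large-$d$ asymptotic $(a_0 + kd)(a_0 + \ell d) \sim k\ell d^2$ when $d \gg M$) to reduce to the interval case. In any event we may suppose $\A \subseteq [N]$ with $|\A| \geq \delta' N$.

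Next I would apply a Hardy--Ramanujan filter. Setting $k_0 = \lfloor \log\log N \rfloor$ and $L = L(N)$ slowly tending to infinity (say $L = (\log\log N)^{3/4}$), let $\A^* := \{a \in \A : |\Omega(a) - k_0| \leq L\}$; then $|\A^*| \geq (1 - o(1))|\A|$. Every $m \in \A^* \cdot \A^*$ satisfies $|\Omega(m) - 2k_0| \leq 2L$, and by Sathe--Selberg's formula, the set of such integers $m \leq N^2$ has cardinality at most $N^2(\log N)^{-(2\log 2 - 1) + o(1)}$, which is the size of the claimed bound.

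The core step is a multiplicative-energy estimate
\[
E(\A^*) := \#\{(a_1, a_2, a_3, a_4) \in (\A^*)^4 : a_1 a_2 = a_3 a_4\} \leq |\A|^{2}(\log N)^{2\log 2 - 1 + o(1)},
\]
whereupon Cauchy--Schwarz yields $|\A \cdot \A| \geq |\A^*|^{4}/E(\A^*) \geq |\A|^{2}(\log N)^{-(2\log 2 - 1) - o(1)}$. Using the parameterisation $(a_1, a_2, a_3, a_4) = (ps, qt, qs, pt)$ with $\gcd(p, q) = 1$, one has $E(\A^*) = \sum_{\gcd(p, q) = 1} N(p, q)^{2}$ where $N(p, q) := \#\{s : ps, qs \in \A^*\}$. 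The $\Omega$-restriction confines $\Omega(s)$ to an interval of length $O(L)$ around $k_0 - \Omega(p)$, so Sathe--Selberg in arithmetic progressions controls each $N(p, q)$, and summing dyadically over $(p, q)$ gives the main contribution.

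The hardest point is a final $\log N$ savings in the energy bound. The crude inequality $\tau_{\A^*}(m) \leq \tau(m) \leq 2^{\Omega(m)} \leq (\log N)^{2\log 2}$ combined with $|\A \cdot \A| \geq |\A|^{2}/\max_m \tau_{\A^*}(m)$ only yields $|\A \cdot \A| \geq |\A|^{2}/(\log N)^{2\log 2}$, off from the target by a factor of $\log N$. Closing this gap requires exploiting that for $m \leq N^2$ with $\Omega(m) \approx 2\log\log N$, the number of divisor pairs $(a, b) \in [N]^2$ with $ab = m$ is on average only $(\log N)^{2\log 2 - 1}$, not the worst-case $(\log N)^{2\log 2}$. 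To recover this, I would further restrict $\A^*$ to integers with a balanced prime factorisation (no prime factor exceeding $a^{1-\eta}$) and apply Ford--Tenenbaum-type estimates on integers with divisors in short intervals; since the balanced class has positive density in $[N]$, this additional filtering costs only a $1 - o(1)$ factor in density.
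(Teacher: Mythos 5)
Your high-level strategy---reduce via Cauchy--Schwarz to a multiplicative energy bound, restrict to elements with a typical number of prime factors, and parameterize the quadruples $a_1 a_2 = a_3 a_4$ by pulling out a gcd---matches the paper's. But the proposal has two gaps, and the first is serious.

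\textbf{The reduction to $\A \subseteq [N]$ dense is not valid.} After dividing out $\gcd(a_0, d)$, a dense subset of an arithmetic progression $\{a_0 + kd : 0 \leq k < M\}$ is not at all a dense subset of an interval $[N]$, and no appeal to Sathe--Selberg ``in arithmetic progressions'' converts it into one. The paper instead localizes to a dyadic window (so all elements lie in $[a, 2a)$), observes that if $a$ is much larger than the AP length $L$ (specifically $a \gg L\log L$) then $E_\times(\A) = O(L^2)$ by an elementary gcd-counting argument (Lemma~3.2), and thereby reduces to a genuinely short-interval situation: a dense subset of $\{a+id : 0 \leq i < L\}$ with $dL \leq a < L\log L$ and $d \leq O(\sqrt{\log L})$. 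That is, the elements are all of comparable size (within a factor $2$), lie in a fixed residue class modulo a small modulus $d$, and occupy a window of relative width $dL/a$ which can be as small as $1/\log L$. This is the essential case, and it cannot be ``supposed'' to be $[N]$: the density of such a set inside $[1, 2a]$ is $\Theta(1/\log L)$, not $\Theta(1)$, so the Pomerance--S\'ark\"ozy result you'd be invoking simply does not apply. The paper handles it with Shiu's theorem (mean values of $2^{-\omega}$ over arithmetic progressions in short intervals), which is precisely the tool adapted to this regime.

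\textbf{The final $\log N$ savings is identified but not delivered.} You correctly observe that the naive divisor bound $2^{\Omega(m)} \leq (\log N)^{2\log 2}$ overshoots by a factor $\log N$, and that the correct heuristic is the average number of ``balanced'' divisor pairs. But your proposed fix---restricting to integers with no large prime factor and invoking Ford--Tenenbaum estimates on divisors in short intervals---is sketched, not executed, and it is unclear how those estimates transfer to the AP/short-interval case you would actually be facing. The paper gets the savings differently and cleanly: it first reduces to square-free elements (via Pigeonhole on the common square factor), so that $\omega(x_i y_j) = \omega(x_i) + \omega(y_j)$, then filters on $\omega(n) \leq T = \log\log N + (\log\log N)^{2/3}$. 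The square-free structure lets one insert the weight $2^{-\omega(x_1)-\omega(x_2)-\omega(y_1)-\omega(y_2)+2T} \geq 1$ into the sum over quadruples, and then Shiu's theorem applied to each inner sum $\sum 2^{-\omega(\cdot)}$ over the relevant short AP produces exactly the $(\log N)^{-1}$ savings (one factor of $(\log N)^{-1/2}$ from each of two independent $y$-sums), leaving $2^{2T}(\log N)^{-1} = (\log N)^{2\log 2 - 1 + o(1)}$. Without the square-free reduction your $\Omega$-based filter does not multiplicativize cleanly, and without Shiu's theorem the short-AP mean values are not controlled.

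In short: the overall architecture (energy bound, prime-factor filter, Cauchy--Schwarz, gcd parameterization) is sound and parallels the paper, but the reduction step conceals the true difficulty (short intervals and APs, not $[N]$), and the claimed savings at the end rests on an unproved appeal to results that are not obviously applicable in the required setting.
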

We remark that Pomerance and S\'ark\"{o}zy \cite{PS90} proved a special case of Theorem~\ref{cor: AA2log2-1} $(\P= [N])$.
We also prove a more general asymmetric version. 
\begin{theorem}\label{cor: AB2log2-1}
Let $\delta \in (0,1]$ and $\A, \B $ be two subsets of two finite arithmetic progressions $\P_1, \P_2 \subseteq\Z$, and $|\A| \ge \delta |\P_1|$, $|\B| \ge \delta |\P_2|$, then
\[|\A \cdot \B| \gg_{\delta} |\A||\B|(\log |\A|)^{\frac{1}{2}-\log 2 - o(1)} (\log |\B|)^{\frac{1}{2}-\log 2 - o(1)}.\]
\end{theorem}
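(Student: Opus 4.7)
I would reduce Theorem~\ref{cor: AB2log2-1} to the symmetric result of Theorem~\ref{cor: AA2log2-1} via two applications of Cauchy--Schwarz, with the multiplicative energy
\[
  E^*(\X,\Y) := \bigl|\{(x_1, y_1, x_2, y_2) \in \X^2 \times \Y^2 : x_1 y_1 = x_2 y_2\}\bigr|
\]
serving as the bridge. The first step is the standard Cauchy--Schwarz on the representation function $r(n) = |\{(a,b) \in \A \times \B : ab = n\}|$, which combined with the identities $\sum_n r(n) = |\A||\B|$ and $\sum_n r(n)^2 = E^*(\A, \B)$ gives
\[
  |\A \cdot \B| \ge \frac{(|\A||\B|)^2}{E^*(\A, \B)}.
\]

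Next, I partition the quadruples counted by $E^*(\A, \B)$ according to the common ratio $q = a_1/a_2 = b_2/b_1$. Writing $s_\X(q) := |\{(x, x') \in \X^2 : x = q x'\}|$, one has the identity $E^*(\A, \B) = \sum_q s_\A(q)\, s_\B(q)$ over positive rationals $q$, and a second Cauchy--Schwarz yields
\[
  E^*(\A, \B) \le \sqrt{E^*(\A, \A)\, E^*(\B, \B)}.
\]
Together the two displays reduce the asymmetric product-set bound to upper bounds on the symmetric energies $E^*(\A, \A)$ and $E^*(\B, \B)$.

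The final input is the energy form of Theorem~\ref{cor: AA2log2-1}, namely
\[
  E^*(\A, \A) \ll_\delta |\A|^2 (\log |\A|)^{2\log 2 - 1 + o(1)}
\]
and likewise for $\B$, which is the natural output of a Sathe--Selberg-type analysis: one restricts to the bulk of $\A$ where $\Omega(a) \approx \log\log |\A|$, so that most products $a_1 a_2$ lie in the set of integers with $\Omega(m) \approx 2\log\log |\A|$ whose size is controlled by Sathe--Selberg, and one bounds the second moment of the divisor counts thereon. Substituting this estimate and its analogue for $\B$ into the preceding inequalities produces
\[
  |\A \cdot \B| \gg_\delta |\A||\B|\, (\log|\A|)^{1/2 - \log 2 - o(1)}(\log|\B|)^{1/2 - \log 2 - o(1)},
\]
as required. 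The main difficulty I foresee is confirming that the argument for Theorem~\ref{cor: AA2log2-1} genuinely produces the energy upper bound and not merely the product-set lower bound (the two are equivalent only up to the Cauchy--Schwarz gap); if a direct extraction fails, the fallback is to re-run the Sathe--Selberg count asymmetrically for $E^*(\A, \B)$, tracking the split $\Omega(a) \approx \log\log|\A|$ and $\Omega(b) \approx \log\log|\B|$, which should give the same quantitative bound by the same mechanism.
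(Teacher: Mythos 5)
Your proof plan follows the same route as the paper: reduce to an upper bound on multiplicative energy and apply Cauchy--Schwarz twice (this is exactly the paper's Lemma~\ref{lem: CS}, used together with Theorem~\ref{thm: A energy 2log2-1}). Both Cauchy--Schwarz steps you sketch are correct, including the ratio-representation decomposition $E^*(\A,\B)=\sum_q s_\A(q)s_\B(q)$.

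There is, however, one genuine imprecision that must be fixed. The energy inequality you take as input, $E^*(\A,\A)\ll_\delta |\A|^2(\log|\A|)^{2\log 2-1+o(1)}$, is \emph{false} for the full set $\A$. Already for $\A=[N]$ one has $E_\times([N])\asymp N^2\log N$, which is far larger than $N^2(\log N)^{2\log 2-1+o(1)}\approx N^2(\log N)^{0.386}$; the energy is dominated by the rare elements with abnormally many prime factors, and no Sathe--Selberg count on the bulk will tame their contribution to $E_\times$ of the whole set. What the paper actually proves (Theorem~\ref{thm: A energy 2log2-1}) is the weaker but sufficient statement that there is a subset $\A'\subseteq\A$ with $|\A'|\gg_\delta|\A|$ and $E_\times(\A')\ll_\delta|\A|^2(\log|\A|)^{2\log 2-1+o(1)}$; the chain of inequalities then begins from $|\A\cdot\B|\ge|\A'\cdot\B'|$ and runs with $\A',\B'$ in place of $\A,\B$. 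You do allude to this restriction in passing (``one restricts to the bulk of $\A$''), so your intent is right, but your displayed energy bound must refer to such a large subset rather than to $\A$ itself; without that, the stated input is simply untrue and the argument does not close. With that correction your plan coincides with the paper's proof.
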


Although Theorem~\ref{cor: AA2log2-1} does not solve Conjecture~\ref{conj: ER} completely, it can be used to prove the case for doubling number up to $4$ (in terms of difference set). In this case, Eberhard, Green and Manners \cite[Theorem 6.4]{EGM14} proved that $\A$ must be a subset of an arithmetic progression with positive density, and thus the following corollary can be  deduced from Theorem~\ref{cor: AA2log2-1} immediately.
\begin{corollary}\label{cor: 2log2-1: doubling 4}
Let $\ee>0$. 
Let $\A \subseteq\Z$ be a finite set with 
$
|A-A|\le (4-\ee)|A|
$, then 
\[|\A \cdot \A| \gg_{ \ee} |\A|^{2} (\log |\A|)^{1-2\log2 -o(1)}.  \]
\end{corollary}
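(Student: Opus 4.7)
The plan is to deduce Corollary~\ref{cor: 2log2-1: doubling 4} as an immediate consequence of Theorem~\ref{cor: AA2log2-1}, using the structural input from the Eberhard--Green--Manners theorem to convert the difference-set hypothesis into the density-in-a-progression hypothesis of Theorem~\ref{cor: AA2log2-1}.

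First, I would invoke \cite[Theorem 6.4]{EGM14}, which states that if $\A \subseteq \Z$ is finite with $|\A - \A| \leq (4-\ee)|\A|$, then there exists an arithmetic progression $\P \subseteq \Z$ containing $\A$ with $|\P| \leq C(\ee) |\A|$; equivalently, $|\A| \geq \delta |\P|$ for some $\delta = \delta(\ee) > 0$. This is exactly the hypothesis required by Theorem~\ref{cor: AA2log2-1}. Note that the threshold $4$ is natural here because below it the EGM result gives a one-dimensional progression, whereas above that threshold one only gets generalized progressions of higher dimension via Freiman--Ruzsa, which would not directly fit our framework.

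Next, I would apply Theorem~\ref{cor: AA2log2-1} to this $\A$ and $\P$, with the constant $\delta(\ee)$ from the previous step. This immediately yields
\[
|\A \cdot \A| \gg_{\delta(\ee)} |\A|^2 (\log |\A|)^{1 - 2\log 2 - o(1)},
\]
and absorbing $\delta(\ee)$ into the $\ee$-dependence gives the claimed bound $|\A \cdot \A| \gg_{\ee} |\A|^2 (\log |\A|)^{1 - 2\log 2 - o(1)}$.

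There is essentially no obstacle: the proof is a two-line deduction combining a black-box structural theorem with our main theorem for dense subsets of progressions. The only subtlety worth flagging is that the hypothesis is phrased in terms of $|\A-\A|$ rather than $|\A+\A|$, since the EGM structural result is sharp precisely for the difference set (with constant $4$); the analogous statement for $|\A+\A|$ below some threshold would follow by the same argument once an appropriate structural result is invoked, but that is not needed here.
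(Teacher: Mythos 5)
Your proposal is exactly the paper's argument: invoke Eberhard--Green--Manners \cite[Theorem 6.4]{EGM14} to conclude that a set with $|\A-\A|\le(4-\ee)|\A|$ is a positive-density subset of an arithmetic progression, then apply Theorem~\ref{cor: AA2log2-1}. Nothing to add.
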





In Theorem~\ref{cor: AB2log2-1}, one can relax the condition that $\A, \B$ are dense subsets of arithmetic progressions $\P_1, \P_2$, to the condition that $\A$ intersects an arithmetic progression $\P_1$ (with sizes $|\P_1| = \Theta( |\A|)$) with positive density, and a similar condition holds for $\B$ and $\P_2$. Such $\A$ and $\B$ may have large sumsets. 

Our Theorem~\ref{cor:  AB2log2-1} also easily implies the following observation. Consider the restricted sumsets $$\A+_{G}\B: = \{a+b: (a,b)\in G)\}, \quad |G|\ge (1-\ee)|\A||\B|,$$
where $\ee$ is a small absolute constant. If the restricted sumset is small, then one can get similar lower bounds on $|\A \cdot \B|$ as in Theorem~\ref{cor: AB2log2-1}. The proof simply consists of using the structural results in \cite{Lev, SX19} to conclude that such set $\A, \B$ must overlap with some arithmetic progressions $\P_1, \P_2$ heavily, and then applying Theorem~\ref{cor:  AB2log2-1} to conclude the proof. We leave the details for the interested reader.

We prove Theorems~\ref{cor:  AB 86} and \ref{cor: AB2log2-1} by bounding the multiplicative energy from above.

\begin{definition}[Multiplicative energy]\label{def: energy}
Let $\A, \B$ be two finite subsets of integers. The multiplicative energy between $\A, \B$ is defined as 
\[ E_\times(\A, \B):= \left| \{(a_1, a_2, b_1, b_2)\in \A \times \A \times \B \times \B: a_1 b_1= a_2b_2\} \right| .\]
When $\A=\B$, we write $E_\times(\A):= E_\times(\A, \A)$, which is called the multiplicative energy of $\A$. 
\end{definition}
 
The first main theorem is about the multiplicative energy of any finite arithmetic progression. 
\begin{theorem}\label{thm: A energy 86} 
Let $\A \subseteq\Z$ be a finite arithmetic progression. Then there exists a subset $\A'\subseteq \A$ with size $|\A'|\ge |\A|(\L|\A|)^{-\theta-o(1)}$ such that $E_{\times } (\A') \ll |\A'|^{2}.$
\end{theorem}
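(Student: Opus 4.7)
The plan is to construct $\A' \subseteq \A$ whose prime factorizations look ``generic'' in a precise sense dictated by the Erd\H{o}s multiplication-table constant $\theta$, and then bound $E_\times(\A')$ via the anatomy-of-integers technology of Erd\H{o}s, Tenenbaum, and Ford.

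First I would reduce to a canonical arithmetic progression. Writing $\A = \{a+kd : 0 \le k < N\}$ with $N = |\A|$, uniform rescaling by $\gcd(a,d)^{-1}$ preserves both $|\A|$ and $E_\times(\A)$, so one may assume $\gcd(a, d) = 1$; in particular every element of $\A$ is coprime to $d$. Then I would construct $\A'$ as follows: partition the primes not dividing $d$ into dyadic levels $P_j = (2^{2^j}, 2^{2^{j+1}}]$, and for $a \in \A$ set $\omega_j(a) = |\{p \in P_j : p \mid a\}|$; take $\A'$ to be those $a$ whose joint profile $(\omega_j(a))_j$ lies in a prescribed ``typical'' region near its mean, and which are additionally ``Ford-good'' in the sense that no multiplicative short interval contains too many divisors of $a$. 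Using a Selberg--Delange type estimate for integers in $a + d\Z$ with a prescribed joint prime profile, the density of such $a$ is $(\log N)^{-\theta - o(1)}$; the exponent $\theta$ arises exactly as in Ford's sharp bound for the multiplication table, because the profile is arranged so that $a$ is unlikely to admit a divisor in the critical size-range.

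To bound $E_\times(\A')$, observe that for $n = a_1 b_1 = a_2 b_2$ with all four factors in $\A'$, the profile of $n$ satisfies $\omega_j(n) = \omega_j(a_i) + \omega_j(b_i) \approx 2\log 2$ for every $j$, so $n$ lies in a set of cardinality $\asymp (Nd)^2/(\log N)^{2\log 2 - 1 + o(1)}$ by Sathe--Selberg. For such $n$, each representation $n = ab$ with $a, b \in \A'$ requires a divisor of $n$ whose logarithm is near $\tfrac{1}{2}\log n$; by the Ford--Tenenbaum $\Delta$-function estimate, applied uniformly in $d$, the typical such $n$ has only $O(1)$ such divisors. Summing $d_{\A'}(n)^2$ then yields $E_\times(\A') \ll |\A'|^2$, as required.

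The main obstacle will be establishing the energy bound uniformly in $d$. When $d$ has many small prime divisors, the distribution of $\omega_j$ over $\A$ is shifted relative to the classical one for $[N]$, and both the Selberg--Delange asymptotics and the Ford-type $\Delta$ bounds must be controlled in this regime. This is precisely where the arithmetic information specific to integers (as opposed to a mere sumset hypothesis) has to enter the argument, and where the proof must go beyond the classical multiplication-table analysis.
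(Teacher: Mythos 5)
Your high-level plan — pass to a subset $\A'$ defined by a prime-factorization profile, establish $|\A'|\ge |\A|(\log|\A|)^{-\theta-o(1)}$, and bound $E_\times(\A')$ by anatomy-of-integers estimates — matches the paper's overall strategy, and the reduction to $\gcd(a,d)=1$ is indeed used. But there are two genuine gaps.

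First, your description of $\A'$ is self-contradictory. You take $(\omega_j(a))_j$ in a region ``near its mean'', yet assert the density is $(\log N)^{-\theta-o(1)}$; these cannot both hold, since elements near the mean profile have density $1-o(1)$ by Hardy--Ramanujan. The paper instead imposes the Ford extremal profile: $\A'=\CN_k(\A;\log 4,\beta)$ consists of square-free $n$ with $\omega(n)=k=(1+o(1))\frac{\LL L}{\log 4}$ (about a $\tfrac{1}{2\log 2}$-fraction of the typical count) together with the cumulative constraint $\LL p_j(n)\ge j\log 4-\beta$. This is a large-deviation event, and its density $(\log L)^{-\theta-o(1)}$ is proved not by invoking Selberg--Delange (which, for joint profile conditions over general progressions $a+d\Z$, is far from off-the-shelf), but by a direct reciprocal-sum count: fix the first $k-1$ primes, estimate $\sum 1/(p_1\cdots p_{k-1})$ via a Mertens/Smirnov computation (Lemma~\ref{lem:sum-of-reciprocal}), and count the last prime in the residual progression by Siegel--Walfisz (Lemma~\ref{lem:primes-in-AP}). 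Your appeal to a ``Selberg--Delange type estimate with a prescribed joint prime profile'' uniformly in $d$ is asking for an input the paper deliberately avoids needing.

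Second, and more seriously, your energy bound via the $\Delta$-function is a different and substantially harder route. You would need $\Delta(n)=O(1)$ for all $n$ counted (not just typical $n$), since $E_\times(\A')=\sum_n d_{\A'}(n)^2\le \sum_n d_{\A'}(n)\,\Delta(n)$ and the heavy tail of $\Delta$ must be controlled — uniformly in $d$ — which you do not do. The paper does not use $\Delta$ at all. It instead parametrizes multiplicative quadruples as $(x_1,x_2;y_1,y_2)$ with $x_iy_j\in\A'$, applies Shiu's theorem for short-interval averages of $\lambda^{\omega(\cdot)}\xi^{\omega(\cdot,t)}$ over progressions, and optimizes $\lambda,\xi$ (Proposition~\ref{thm: energy bound}). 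This yields $E_\times(\A'')\ll L^2(\log L)^{-2\theta+o(1)}$ rather than $\ll|\A''|^2$ directly; only after a random sub-selection (Lemma~\ref{lem:random-selection}) does one extract $\A'\subseteq\A''$ with $E_\times(\A')\ll|\A'|^2$. In short: the conceptual picture you describe is right, but the specific machinery you propose (typical profile + Selberg--Delange + $\Delta$-function, all uniform in $d$) leaves the hard parts open, precisely where the paper substitutes more elementary, uniformity-friendly tools.
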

A stronger version in terms of the $o(1)$ term is achieved in Appendix \ref{sec:Smirnov-stronger}. The second main theorem is about the multiplicative energy of a dense subset of any finite arithmetic progression. 
\begin{theorem}\label{thm: A energy 2log2-1}
Let $\delta \in (0,1]$ and let $\A $ be a subset of a finite arithmetic progression $\P\subseteq\Z$ such that $|\A| \ge \delta |\P| $. Then there exists a subset $\A'\subseteq \A$ with size $|\A'|\gg_\delta |\A|$ such that
\[ E_{\times}(\A') \ll_{\delta} |\A|^{2}(\log |\A|)^{2\log 2 -1+o(1)}. \]
\end{theorem}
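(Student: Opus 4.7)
The plan is to pass to a large subset $\A' \subseteq \A$ whose elements have a typical number of prime factors (in the Hardy--Ramanujan sense), and then bound the multiplicative energy via a ratio decomposition whose entries are controlled by the Sathe--Selberg formula.

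First I would normalize: writing $\P = \{a_0 + id : 0 \le i < |\P|\}$ and $g := \gcd(a_0, d)$, I use the scale-invariance $E_\times(gS) = E_\times(S)$ to reduce to the case $\gcd(a_0, d) = 1$, so every element of $\P$ is coprime to $d$. Setting $N := \max(\A)$, define
\[ \A' := \{a \in \A : |\Omega(a) - \log\log N| \le \psi(N)\sqrt{\log\log N}\} \]
for a function $\psi(N)\to\infty$ slowly (say $\psi(N) = \log\log\log N$). By a Hardy--Ramanujan type concentration for integers in arithmetic progressions (applicable because $\gcd(a_0,d)=1$), the atypical set has density $o(1)$ in $\P$, so $|\A'| \ge (1-o(1))|\A| \gg_\delta |\A|$, and every $a \in \A'$ has $\Omega(a) = \log\log N + O(\psi(N)\sqrt{\log\log N})$.

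Next, to bound $E_\times(\A')$, parametrize each quadruple $(a_1, a_2, b_1, b_2) \in (\A')^4$ with $a_1 b_1 = a_2 b_2$ by writing $a_1/a_2 = u/v$ in lowest terms and $a_1 = su$, $a_2 = sv$, $b_2 = tu$, $b_1 = tv$. This yields the ratio decomposition
\[ E_\times(\A') = \sum_{\gcd(u, v) = 1} N(u, v)^2, \qquad N(u, v) := |\{s \in \N : su,\, sv \in \A'\}|. \]
The diagonal term $(u,v) = (1,1)$ contributes exactly $|\A'|^2$, which is within the target. For off-diagonal pairs, the $\Omega$-restriction on $\A'$ forces $\Omega(u) \approx \Omega(v)$ and $\Omega(s) \approx \log\log N - \Omega(u)$; combined with the AP-induced congruence $s \equiv a_0 u^{-1} \pmod{d}$, the Sathe--Selberg formula for integers in residue classes yields
\[ N(u, v) \ll \frac{|\A'|}{\max(u, v)}\,(\log N)^{f(1-\alpha) - 1 + o(1)}, \qquad \alpha := \frac{\Omega(u)}{\log\log N}, \]
with $f(\lambda) := \lambda(1-\log\lambda)$. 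A careful dyadic summation over $\max(u, v)$ and the common $\Omega$-value then delivers $\sum_{(u,v)\ne (1,1)} N(u,v)^2 \ll_\delta |\A|^2(\log|\A|)^{2\log 2 - 1 + o(1)}$; the exponent $2\log 2 - 1$ emerges as the supremum of the resulting exponential functional, attained at $\alpha = 1/2$, in agreement with the heuristic that $\A'\cdot \A'$ is concentrated on integers with $\Omega \approx 2\log\log N$.

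The main obstacle is this final optimization: extracting the precise exponent $2\log 2 - 1$ from the competition between the Sathe--Selberg density gain (from pinning $\Omega(s)$) and the Sathe--Selberg count of valid coprime $(u,v)$ at each scale requires careful bookkeeping, in particular exploiting both $\gcd(u, v) = 1$ and the AP-residue restriction on $s$ to prevent over-counting (most delicately for small $\alpha$, where the naive Sathe--Selberg bound is loose). A secondary technical point is that Hardy--Ramanujan and Sathe--Selberg must hold uniformly across residue classes modulo $d$; this is routine when $|\P|$ is not too small relative to $d$, and the opposite extreme can be handled by an elementary rescaling argument.
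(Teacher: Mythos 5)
Your high-level strategy is the same as the paper's (Sections 3--4): pass to a positive-density subset with a typical number of prime factors, decompose multiplicative quadruples via $u/v = $ the paper's $y_1/y_2$ with $\gcd(u,v)=1$, and control the resulting counts by the distribution of prime divisors over short AP-blocks. The concrete gap is in the central estimate
\[ N(u,v) \ll \frac{|\A'|}{\max(u,v)}\,(\log N)^{f(1-\alpha)-1+o(1)}. \]
What it actually asks for is an upper bound on the number of $s$ lying in a \emph{short interval} $[x,x+y)$ with $y \asymp |\P|\cdot d/\max(u,v) \ll x$, lying in a residue class modulo $d$, and with $\Omega(s)$ pinned near $(1-\alpha)\log\log N$. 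The Sathe--Selberg formula, even its variants for residue classes, is proved for the full range $[1,x]$ and does not deliver the needed uniformity over short intervals. What this step really needs, and what the paper uses, is Shiu's theorem on mean values of multiplicative functions over short AP-intervals (Theorem 4.3 of the paper), applied through a Rankin-type weighting such as $1 \le 2^{\,T-\omega(n)}$ when $\omega(n)\le T$, or $z^{\Omega(s)-K}$ with $0<z\le 1$ when $\Omega(s)\le K$; that is exactly the mechanism behind the paper's Theorem 4.2. Your "secondary technical point" about uniformity modulo $d$ is in fact not the sticking point; the shortness of the interval is.

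There are further, more bookkeeping-level, gaps. (a) The paper first passes to square-free elements so that $\omega$ becomes additive; you use $\Omega$, which is completely additive and so sidesteps this, but you then need Shiu's hypotheses for the family $z^{\Omega(n)}$. For $z$ bounded away from $1$ from above, $z^{\Omega(n)}$ is not $n^{o(1)}$ (take $n=2^{k}$), so the upper-tail Hardy--Ramanujan step has to restrict to $z=1+o(1)$ and be checked carefully. (b) The reductions in Section 3 of the paper (pass to positive elements, divide by the $\gcd$ of $a$ and $d$, restrict to a dyadic sub-progression so $a>dL$, and dispatch $a\gg L\log L$ by elementary counting, Lemma 3.2) are not cosmetic: they are what makes $u/v$ close to $1$, makes $\min(u,v)\ge a/(dL)$, and puts the parameters in the range where Shiu applies. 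These are more than "an elementary rescaling argument." (c) In the dyadic optimization, the exponent in your bound on $N(u,v)$ is really in terms of $\log(a/\max(u,v))$, not $\log N$, and the effective $\alpha$ varies with the scale of $\max(u,v)$; the exponent $2\log2-1$ emerges only after the scale integration contributes an extra factor, so "supremum attained at $\alpha = 1/2$" is an oversimplification of a genuinely two-parameter optimization. The paper avoids this entirely by choosing a single scale-agnostic weight $2^{-\omega(\cdot)}$, slightly lossy pointwise but uniform in the scale, which is what makes the final summation (via $\sum_k k^{-1}$) clean. None of these points is unfixable, but each one is real work, and the cleanest way to close them is to run the argument through Shiu + Rankin as in the paper rather than through Sathe--Selberg.
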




\subsection{Proof ideas}
Our main focus will be on Theorem~\ref{thm: A energy 86} and Theorem~\ref{thm: A energy 2log2-1}, and all the rest of our results will follow from them fairly straightforwardly (See Section~\ref{Sec: deduction to energy}). 

Theorem~\ref{thm: A energy 86} and Theorem~\ref{thm: A energy 2log2-1} are well understood when the arithmetic progression is just the first $N$ integers (See \cite{Ten84,Erd60,ford2008sharp} for the special case of Theorem~\ref{thm: A energy 86} and  \cite{PS90} for the special case of Theorem~\ref{thm: A energy 2log2-1}.) The classical proofs of both special cases involve estimates on $\pi_k(x)$ (the number of integers up to $x$ with $k$ distinct prime factors) and tools like Shiu's Theorem on bounding mean values of multiplicative functions. The original arguments cannot be directly generalized because these tools are not applicable to general arithmetic progressions. Our main novelty is the deduction of the general case in the case where the parameters of the arithmetic progression lie in a certain range such that all classical tools are applicable. The deduction is completed by proving the desired upper bounds for multiplicative energies in the other cases via elementary counting arguments. After the deduction steps, we carefully employ and adapt the method developed in \cite{ford2008sharp, Erd60, Ford08, FordExtremal18} to conclude the proof of both Theorem~\ref{thm: A energy 86} and Theorem~\ref{thm: A energy 2log2-1}.



\subsection{Organization}
We organize the paper as follows. 
In Section~\ref{Sec: deduction to energy} we prove Theorems~\ref{cor:  AA 86}, \ref{cor:  AB 86}, \ref{cor: AA2log2-1} and \ref{cor: AB2log2-1} assuming Theorems~\ref{thm: A energy 86} and \ref{thm: A energy 2log2-1}. This is done by applying Cauchy-Schwarz type arguments (see Lemma~\ref{lem: CS}).
The remaining sections are devoted to proving the main Theorems~\ref{thm: A energy 86} and \ref{thm: A energy 2log2-1}. To prove Theorem~\ref{thm: A energy 2log2-1}, we first do a sequence of reduction steps in Section~\ref{sec:reduction} as a preparation to proving the essential case of Theorem~\ref{thm: A energy 2log2-1} 
in Section~\ref{sec:square-free}. 
We study the number of elements with a fixed number of prime factors in arithmetic progressions in Section~\ref{sec: Smirnov} and crucially use it to complete the proof of Theorem~\ref{thm: A energy 86} in Section \ref{Sec: main thm}. Finally in Section~\ref{sec:concluding} we make conjectures about both problems.

\subsection{Notations} For two functions $f, g: \R\to \R$, we write $f\ll g, g\gg f, g= \Omega(f)$ or $f = O(g)$ if there exists a positive constant $C$ such that $f\le C g$, and we write $f \asymp g$ or $f=\Theta(g)$ if $f\ll g$ and $g \gg f$. We write
$f=o(g)$ if $f(x) \le \epsilon g(x)$ for any $\epsilon>0$ when $g(x)$ is sufficiently large. We write $\ll_\delta$ or $\gg_{\delta}$ if the implicit constant depends on $\delta$.
Throughout the paper, $\theta$ always denotes the constant $1- \frac{1+\log \log 4}{\log 4}\approx 0.043...$. For two sets $\A$ and $\B$, we write $\A\B = \A\cdot \B = \{ab:a\in \A, b\in \B\}$. When $\A = \{a\}$ is a singleton, we write $a\B = \{a\}\cdot\B$. For positive integer $n$, $\omega(n)$ denotes the number of distinct prime factors of $n$ and $\phi(n)$ is the Euler's totient function. All logarithms are base $e$.


\section{Reduction to multiplicative energy estimates}\label{Sec: deduction to energy}
In this section, we show that Theorems~\ref{thm: A energy 86} and \ref{thm: A energy 2log2-1} imply all the other main results. 
The implications follow from the Cauchy-Schwarz inequality (see Lemma~\ref{lem: CS}). The proof of Lemma~\ref{lem: CS} is standard (e.g., see \cite{TaoVu} Corollary 2.10 for an additive version): we include it here for completeness.

\begin{lemma}\label{lem: CS}
Let $\A, \B$ be nonempty sets of nonzero integers. Then we have
\begin{enumerate}
    \item\label{item:CS-1} $|\A\cdot \B|  \ge \frac{|\A|^{2}|\B|^2}{E_\times (\A, \B)}.$
    \item\label{item:CS-2} $ E_{\times}(\A,\B) \le \sqrt{E_{\times}  (\A) E_{\times} (\B) }$.
\end{enumerate}
\end{lemma}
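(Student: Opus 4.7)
The plan is to prove both parts by expressing the relevant quantities as $L^2$-type sums of representation functions and then invoking Cauchy--Schwarz.

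For part \eqref{item:CS-1}, I would define the multiplicative representation function
\[ r(n) := \#\{(a,b)\in \A\times\B : ab=n\}, \]
supported on $\A\cdot\B$. Two immediate identities are $\sum_{n} r(n)=|\A||\B|$ and $\sum_{n} r(n)^{2}=E_{\times}(\A,\B)$, the latter because each solution to $a_{1}b_{1}=a_{2}b_{2}$ is counted exactly once when expanding the square. Applying Cauchy--Schwarz to the sum over $n\in\A\cdot\B$,
\[ |\A|^{2}|\B|^{2} = \Bigl(\sum_{n\in\A\cdot\B} r(n)\Bigr)^{\!2} \le |\A\cdot\B|\,\sum_{n} r(n)^{2} = |\A\cdot\B|\cdot E_{\times}(\A,\B), \]
which rearranges to the claimed bound.

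For part \eqref{item:CS-2}, the key observation is to reparameterize multiplicative energy by \emph{ratios} rather than products. Noting that $a_{1}b_{1}=a_{2}b_{2}$ is equivalent (since the integers are nonzero) to $a_{1}/a_{2}=b_{2}/b_{1}$, I would define
\[ t_{\A}(q) := \#\{(a_{1},a_{2})\in \A\times\A : a_{1}=qa_{2}\}, \qquad t_{\B}(q) := \#\{(b_{1},b_{2})\in \B\times\B : b_{1}=qb_{2}\}, \]
for $q\in\Q^{\times}$. Then $E_{\times}(\A)=\sum_{q} t_{\A}(q)^{2}$ and similarly for $\B$, while summing over the common ratio $q$ gives
\[ E_{\times}(\A,\B) = \sum_{q} t_{\A}(q)\, t_{\B}(q). \]
A single application of Cauchy--Schwarz to this inner product yields
\[ E_{\times}(\A,\B) \le \Bigl(\sum_{q} t_{\A}(q)^{2}\Bigr)^{\!1/2}\Bigl(\sum_{q} t_{\B}(q)^{2}\Bigr)^{\!1/2} = \sqrt{E_{\times}(\A)\,E_{\times}(\B)}. \]

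There is no real obstacle here; the only points requiring mild care are (a) verifying on both sides that the reparameterization in terms of ratios really gives the stated counting identity for $E_{\times}(\A,\B)$ (which is why the hypothesis of nonzero integers is used, so that division is unambiguous), and (b) checking that the supports in the Cauchy--Schwarz step are compatible. Both are routine bookkeeping.
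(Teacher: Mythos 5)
Your proof is correct and follows essentially the same two-step Cauchy--Schwarz argument as the paper: part (1) via the product representation function $r$, and part (2) via the ratio representation functions $t_\A, t_\B$, which coincide with the paper's $r_{\A/\A}, r_{\B/\B}$. The only microscopic point glossed over (and which you flag as "routine bookkeeping") is that the equivalence $a_1b_1=a_2b_2 \iff a_1/a_2 = b_2/b_1$ produces the ratio $b_2/b_1$ rather than $b_1/b_2$, so one should note that swapping the roles of $b_1, b_2$ identifies $\sum_q t_\A(q)t_\B(q)$ with $E_\times(\A,\B)$; the paper handles this by observing $a_1/a_2 = b_1/b_2 \iff a_1b_2 = a_2b_1$, which is the same count after a permutation of coordinates.
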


\begin{proof}\
For any two sets of integers $\mathcal{X}, \mathcal{Y} \subset \Z\backslash \{0\} $, we define
\[ r_{\X \cdot \Y}(m): = | (x, y)\in \X \times \Y: xy = m  |, \]
and similarly define define ratio representation functions $r_{\X/\Y} (m)$ for $m\in \Q$. 
By Definition~\ref{def: energy} and the Cauchy-Schwarz inequality, 
\[|\A \cdot \B | \cdot E_{\times } (\A, \B) = \sum_{m\in \A\cdot\B}1 \cdot  \sum_{m\in \A\cdot\B}r_{\A\cdot\B}(m)^{2} \ge  \left(\sum_{m\in \A\cdot\B} r_{\A\cdot\B}(m) \right)^{2} = |\A|^{2}|\B|^{2}.
\]
This gives \eqref{item:CS-1}. By Definition~\ref{def: energy}, 
and noting that $a_1a_2 =a_1'a_2'$ is equivalent to
$\frac{a_1}{a_1'} = \frac{a_2'}{a_2}$, we have
\[E_{\times}(\A)
=  \left|\{ (a_1, a_2; a_1', a_2')\in \A^{4}: a_1a_2 =a_1'a_2' \} \right| = \sum_{m\in \Q}r_{\A/\A}(m)^2.
\]
We have a symmetric equation for $\B$. By the Cauchy-Schwarz inequality, we have
\[ E_{\times}(\A) E_{\times}(\B) = \left(\sum_{m\in \Q}r_{\A/\A}(m)^{2}\right) \cdot 
\left(\sum_{m\in \Q}r_{\B/ \B}(m)^{2}\right) \ge \left(  \sum_{m\in \Q} r_{\A/\A}(m)r_{\B/ \B}(m) \right)^{2}.
\]
The right hand side is the square of the following counting
\[\left|\{ (a_1, a_2; b_1, b_2)\in \A \times \A \times \B \times \B:  a_1/a_2 = b_1/b_2  \} \right | = E_{\times}(\A, \B) . \]
The equality is because $a_1/a_2 = b_1/b_2$ is equivalent to $a_1b_2=a_2b_1$. Hence we have \eqref{item:CS-2}.
\end{proof}

Assuming Theorems~\ref{thm: A energy 86} and \ref{thm: A energy 2log2-1}, we prove all the theorems regarding product sets (Theorems~\ref{cor:  AA 86}, \ref{cor:  AB 86}, \ref{cor: AA2log2-1} and \ref{cor: AB2log2-1}) via Lemma~\ref{lem: CS}. Before we proceed to the proof, we remark that the condition in Lemma~\ref{lem: CS} that $\A, \B$ do not contain $0$ is a mild technical restriction, as deleting one single element changes the size of product sets by at most  $O(|\A| + |\B|)$, and changes multiplicative energy by at most $O(|\A||\B|)$, which are negligible for our purpose. 

\begin{proof}[Proof of Theorems~\ref{cor:  AA 86} and \ref{cor:  AB 86} assuming Theorem~\ref{thm: A energy 86}]
By Theorem~\ref{thm: A energy 86}, there exist subsets $\A'\subseteq \A$ and $\B'\subseteq \B$ with sizes $|\A'|\gg  |\A|(\L|\A|)^{-\theta-o(1)},  |\B'| \gg  |\B|(\L|\B|)^{-\theta-o(1)},$
such that $E_{\times}(\A')\ll |\A'|^2, E_{\times} (\B')\ll |\B'|^2.$
These energy bounds give
\[E_{\times} (\A', \B') \le \sqrt{E_{\times}(\A') E_{\times} (\B')} \ll |\A'||\B'|,   \]
where the first inequality is due to \eqref{item:CS-2} of Lemma~\ref{lem: CS}. By \eqref{item:CS-1} of Lemma~\ref{lem: CS} we conclude that
\[ |\A\cdot \B|\ge |\A'\cdot \B'|\ge \frac{|\A'|^{2}|\B'|^{2}}{E_{\times}(\A',\B')} \ge \frac{|\A||\B|}{(\log |\A|)^{\theta + o(1)}(\log |\B|)^{\theta + o(1)}    }.
\]
We thereby prove Theorem~\ref{cor:  AB 86}. Taking $\B = \A$ in Theorem~\ref{cor:  AB 86} gives Theorem~\ref{cor:  AA 86}. 
\end{proof}

\begin{proof}[Proof of Theorems~\ref{cor: AA2log2-1} and \ref{cor: AB2log2-1} assuming Theorem~\ref{thm: A energy 2log2-1}]
By applying Theorem~\ref{thm: A energy 2log2-1} to $\A$ and $\B$, we get subsets $\A'\subseteq \A$ and $\B'\subseteq \B$ with sizes $|\A'|\gg_{\delta} |\A|,  |\B'|\gg_{\delta} |\B|,$
and energy bounds
\[ E_{\times}(\A') \ll_{\delta} |\A|^{2}(\log |\A|)^{2\log 2 -1+o(1)},\quad 
E_{\times}(\B') \ll_{\delta} |\B|^{2}(\log |\B|)^{2\log 2 -1+o(1)}.
\]
By applying \eqref{item:CS-2} of Lemma~\ref{lem: CS} we get 
\[
E_{\times}(\A, \B) \ll_{\delta} |\A||\B| (\log |\A|)^{\log 2 -\frac{1}{2} +o(1)}(\log |\B|)^{\log 2 -\frac{1}{2} +o(1)} .
\]
Now Theorem~\ref{cor: AB2log2-1} follows by \eqref{item:CS-1} of Lemma~\ref{lem: CS}.
Taking $\B=\A$ in Theorem~\ref{cor: AB2log2-1} gives Theorem~\ref{cor: AA2log2-1}.  
\end{proof}

\section{Proof of Theorem~\ref{thm: A energy 2log2-1}: reduction to the essential case}\label{sec:reduction}
In this section, we want to reduce Theorem~\ref{thm: A energy 2log2-1} to the following case, which is proved in the Section~\ref{sec:square-free}.

\begin{restatable}{theorem}{reduced}\label{thm:reduced}
Let $\delta \in (0, 1]$. Let $\P = \{a+id:  0\leq i < L\}$ be an arithmetic progression with common difference $d$ and length $L$, with $L, d > 0$, $L\log L > a \geq dL$, and $\gcd(a, d) = 1$. If $\A\subseteq \P$ is a subset of size at least $\delta L$ containing only square-free elements and $L$ is sufficiently large with respect to $\delta$, then there exists a subset $\A'\subseteq \A$ with $|\A'|\gg |\A|$ such that
\begin{equation}\label{eqn:thm-reduced}
   E_\times(\A') \ll L^2(\log L)^{2\log2 -1+ o(1)}.
\end{equation}
\end{restatable}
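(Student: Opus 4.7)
The plan is to adapt the Pomerance--S\'ark\"ozy analysis of multiplicative energy for $\A\subseteq[N]$ with $|\A|\gg N$ to the arithmetic progression setting. The key inputs are a Tur\'an--Kubilius-type concentration estimate for $\omega(n)$ on integers in the AP $\P$, and a Shiu-type mean-value theorem for multiplicative functions along arithmetic progressions. The hypotheses $a\ge dL$ and $a<L\log L$ are precisely what ensures these tools apply uniformly throughout: together they force $d<\log L$, so the modulus is small relative to $L$, and every element of $\P$ satisfies $\log\log n = (1+o(1))\log\log L$.

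First, I would restrict to an anatomically typical subset
\[
\A' = \{n\in\A : |\omega(n) - \log\log L|\le\psi(L)\}, \qquad \psi(L):=(\log\log L)^{2/3}.
\]
A Tur\'an--Kubilius inequality for the AP $\P$ shows $|\A\setminus\A'| = o(L)$, so $|\A'|\ge |\A|/2$ for $L$ large.

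Next, I would parametrize solutions to $a_1a_2=b_1b_2$ with $(a_1,a_2,b_1,b_2)\in(\A')^4$ via $k=\gcd(a_1,b_1)$, $u=a_1/k$, $v=b_1/k$ (so $\gcd(u,v)=1$), which forces $a_2=v\ell$, $b_2=u\ell$ for a unique positive $\ell$. This yields
\[
E_\times(\A') = \sum_{\substack{u,v\ge 1\\\gcd(u,v)=1}} D(u,v)^2, \qquad D(u,v):=\#\{k:ku,kv\in\A'\},
\]
with the trivial $u=v=1$ term contributing $|\A'|^2$. For general $(u,v)$, the constraints $ku,kv\in\P\subseteq\{n:\gcd(n,d)=1\}$ force $\gcd(uv,d)=1$ and $u\equiv v\pmod d$; square-freeness of $\A'$ forces $\gcd(k,uv)=1$; and the bound on the diameter of $\P$ (from $a\ge dL$) gives $u/v\in[1/2,2]$. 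The valid $k$'s then form a subset of an AP modulo $d$ of length $\ll L/\max(u,v)$, and the $\omega$-window on $ku,kv$ restricts $\omega(k)$ to a window of width $O(\psi(L))$ near $\log\log L-\omega(u)$. I would bound $D(u,v)$ by applying Shiu's theorem in arithmetic progressions to the indicator of this $\omega$-window. Stratifying the outer sum by $s:=\omega(u)\approx\omega(v)$ and inserting the Sathe--Selberg asymptotic for the count of square-free integers with exactly $s$ prime factors, the optimum $s$ (balancing the Gaussian-type decays attached to $u,v$ and to $k$) produces exactly the exponent $2\log 2-1$, matching the Sathe--Selberg density of integers with $\Omega\approx 2\log\log L$. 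This gives $E_\times(\A')\ll L^2(\log L)^{2\log 2-1+o(1)}$, as required.

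The main technical obstacle is the uniformity of the Shiu-type estimate across the full range of $(u,v)$, especially when $\max(u,v)$ is nearly $L$ (so the AP of valid $k$'s becomes short) and uniformly over the admissible residue classes modulo $d$. The hypotheses $a\ge dL$ and $a<L\log L$ are calibrated to keep the AP of valid $k$'s long enough relative to $d$ and to ensure $\log\log k\approx\log\log L$ uniformly, so that the anatomy-of-integers asymptotics apply with uniform error; ranges where $\max(u,v)$ is too large to be handled by Shiu are absorbed by the trivial bound $D(u,v)\le L/\max(u,v)+O(1)$, whose contribution to the energy is already small.
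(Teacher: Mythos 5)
Your proposal is essentially correct and uses the same core strategy as the paper: restrict to a dense $\omega$-typical subset $\A'$, parametrize solutions of $a_1a_2=b_1b_2$ by the GCD (your $(k,\ell,u,v)$ is exactly the paper's $(x_1,x_2,y_1,y_2)$), read off the coprimality, congruence-mod-$d$, interval, and $u/v\in[1/2,2]$ constraints exactly as you list them, and feed the resulting short-AP sums into Shiu's theorem.

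The one genuine difference is how the $\omega$-restriction is exploited downstream. You propose to stratify by $s=\omega(u)$ and insert Sathe--Selberg densities of square-free integers with exactly $s$ prime factors, then optimize over $s$ --- essentially the Pomerance--S\'ark\"ozy route for $[N]$. The paper instead uses a Rankin-type weighting: since $\omega(x_i)+\omega(y_j)=\omega(x_iy_j)\le T$ for all $i,j$ by square-freeness, it inserts the bound $1\le\prod_{i,j}(1/\sqrt2)^{\omega(x_iy_j)-T}$, which converts the $\omega$-indicator into the multiplicative weight $2^{-\omega(\cdot)}$ on each of $x_1,x_2,y_1,y_2$; Shiu plus Mertens then gives $(\log)^{-1/2}$ per weighted sum, and the prefactor $2^{2T}\asymp(\log L)^{2\log2}$ supplies the rest of the exponent $2\log 2-1$. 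The Rankin route is a cleaner implementation here for two reasons: Shiu's theorem handles multiplicative weights, not indicators, so a stratified count along an AP in a short interval with $\omega(\cdot)$ in a prescribed window would itself require either a Rankin step or a uniform Sathe--Selberg theorem in progressions and short intervals; and the Rankin inequality needs only the one-sided cutoff $\omega\le T$, so the preliminary reduction follows from a single Shiu application rather than a full two-sided Tur\'an--Kubilius concentration estimate. Otherwise your argument and the paper's coincide, including the split between a Shiu regime and a trivial regime for small GCD (large cofactor), which in the paper is the cut $x_1\lessgtr ea^2/L^2$.
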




We start by giving an upper bound on the multiplicative energy in some special cases. When $\gcd(a, d) = 1$, we have the following observation on the multiplicative energy.

\begin{lemma}\label{lem:large-a}
        Let $\P = \{a + id: 0\leq i < L\}$ be an arithmetic progression with $\gcd(a, d) = 1$ and $a > 0, d>0$. For any $\A\subseteq \P$, we have $E_\times(\A) \leq 2|\A|^2 + 4\frac{L^3}{a}(1+\L L).$
\end{lemma}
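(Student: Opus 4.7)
The equation $(a + i_1 d)(a + j_1 d) = (a + i_2 d)(a + j_2 d)$ expands to $a(s_1 - s_2) = d(p_2 - p_1)$ with $s_\ell = i_\ell + j_\ell$ and $p_\ell = i_\ell j_\ell$. Since $\gcd(a, d) = 1$, there exists an integer $k$ with $s_1 - s_2 = dk$ and $p_2 - p_1 = ak$. The plan is to split based on whether $k = 0$. When $k = 0$, the pairs $(i_1, j_1)$ and $(i_2, j_2)$ share the same sum and product, so $\{i_1, j_1\} = \{i_2, j_2\}$ as multisets, contributing at most $2|\A|^2$ ordered quadruples in $\A^4$. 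It then remains to bound the $k \neq 0$ contribution, which I handle inside $\P^4$ since $\A \subseteq \P$.

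For $k \neq 0$, I would first rewrite the equation in the factored form $(i_1 - i_2)\, y_1 = (j_2 - j_1)\, x_2$, where $x_2 = a + i_2 d$ and $y_1 = a + j_1 d$. Setting $g = \gcd(x_2, y_1)$, $\alpha = x_2/g$, and $\beta = y_1/g$, the coprimality of $(\alpha, \beta)$ forces $(i_1 - i_2,\, j_2 - j_1) = (\alpha t, \beta t)$ for some $t \in \Z$. This sets up a bijection between quadruples and triples $(i_2, j_1, t) \in [0, L-1]^2 \times \Z$ with $i_2 + \alpha t,\, j_1 + \beta t \in [0, L-1]$, where the $k = 0$ quadruples correspond exactly to $t = 0$ or $i_2 = j_1$ (the latter forces $\alpha = \beta = 1$). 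For each $(i_2, j_1)$ with $i_2 \neq j_1$, the number of admissible nonzero $t$ is at most $(L-1)/\max(\alpha, \beta) = (L-1) g / \max(x_2, y_1)$. Summing over such $(i_2, j_1)$ and exploiting the symmetry $i_2 \leftrightarrow j_1$ bounds the $k \neq 0$ count by $2(L-1) \sum_{y < x,\; x, y \in \P} \gcd(x, y)/x$.

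To bound $\sum_{y < x,\; x, y \in \P} \gcd(x, y)/x$, I would use the identity $\gcd(a + id, a + jd) = \gcd(a + jd, i - j)$ (which follows from $\gcd(a + jd, d) = \gcd(a, d) = 1$), reindex with $k = i - j \geq 1$, and use $1/(a + (j+k)d) \leq 1/a$ to reduce to estimating $(1/a) \sum_k \sum_j \gcd(a + jd, k)$. Writing $\gcd(a + jd, k) = \sum_{e \mid \gcd(a + jd, k)} \phi(e)$, swapping sums, and noting that $e \mid a + jd$ has a solution in $j$ (occupying a single residue class modulo $e$) only when $\gcd(e, d) = 1$, bounds the inner sum by $L f(k) + k$ with $f(k) = \sum_{e \mid k} \phi(e)/e$. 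Since $\sum_{k \leq L} f(k) \leq L \sum_{e \leq L} \phi(e)/e^2 \leq L(1 + \L L)$, this yields $\sum_{y < x} \gcd(x, y)/x \ll L^2 (1 + \L L)/a$, which assembles into the desired bound $4 L^3 (1 + \L L)/a$ on the $k \neq 0$ contribution. The main delicate step is the bijective parameterization together with the $\gcd$ splitting; once this is in place, the divisor-sum estimate is standard, and the crude inequality $1/(a + (j+k)d) \leq 1/a$ is harmless because $k \neq 0$ already forces $a \leq (L-1)^2$ and $d \leq 2(L-1)$.
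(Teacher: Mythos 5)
Your proof is correct, and it genuinely takes a different route from the paper's. The paper writes the energy equation as $a_1 a_2 = a_3 a_4$ with $a_i \in \A$, extracts $x_1 = \gcd(a_1, a_3)$ to produce a coprime pair, obtains the parameterization $a_1 = x_1 y_1$, $a_3 = x_1 y_2$, $a_2 = x_2 y_2$, $a_4 = x_2 y_1$, and then, for each fixed $x_1$, bounds the number of $(x_2, y_1, y_2)$ using two facts specific to the arithmetic progression: all four products lie in the short window $[a, a+dL)$ (constraining the ratios $x_2/x_1$ and $y_2/y_1$), and all of $x_i, y_j$ are coprime to $d$ and hence confined to a single residue class mod $d$. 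Summing $\lfloor x_1 L/a\rfloor \cdot (L/x_1)^2 \leq L^3/(a x_1)$ over $x_1 < L$ yields $(4L^3/a)(1+\log L)$. Your approach instead works in index space: you linearize the equation to $(i_1 - i_2)y_1 = (j_2 - j_1)x_2$, extract $g = \gcd(x_2, y_1)$ (a \emph{different} gcd than the paper's), set up a bijection with triples $(i_2, j_1, t)$, and bound the off-diagonal count by a gcd sum $2(L-1)\sum_{y<x}\gcd(x,y)/x$, which you estimate via $\gcd(a+id, a+jd) = \gcd(a+jd, i-j)$, the divisor identity $\gcd = \sum_{e\mid\cdot}\phi(e)$, and a harmonic-sum bound. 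Both arguments hinge on a coprime factorization and on $\gcd(a,d)=1$, but the paper's version avoids the divisor-sum machinery entirely by exploiting the window-plus-congruence structure directly, whereas yours is the more classical analytic-number-theory route. Worth noting: the paper's parameterization by $(x_1, x_2; y_1, y_2)$ is reused verbatim (and crucially) in the proofs of Theorem 4.2 and Proposition 6.4, where the dyadic sum over $x_1$ feeds cleanly into Shiu's theorem; your gcd-sum framing would not extend as directly to those weighted counts. As a minor remark, the number-theoretic constraints you mention at the end ($a \leq (L-1)^2$, $d \leq 2(L-1)$ when $k\neq 0$) are not actually needed, since the bound $1/(a + (j+k)d) \leq 1/a$ holds unconditionally for $j+k \geq 0$ and $d>0$; and your final constant, tracked carefully, comes out to $3$ rather than $4$, so you in fact obtain a slightly stronger inequality.
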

\begin{proof}
   The number of solutions $(a_1, a_2; a_3, a_4)\in \A^4$ to $a_1a_2 = a_3a_4$ with $\{a_1, a_2\} = \{a_3, a_4\}$ is at most $2|\A|^2$. We next estimate the number of off diagonal solutions.
    
We begin by parameterizing solutions $(a_1, a_2; a_3, a_4)$. Write $x_1 = \gcd(a_1, a_3)$ and $a_1 = x_1y_1$,  $a_3 = x_1y_2$. As a result, $a_2/y_2 = a_4/y_1$, which we denote as $x_2$. Therefore we derive a tuple $(x_1, x_2; y_1, y_2)$ such that $x_iy_j\in \A$ for all $i, j\in \{1, 2\}$. As $\{a_1, a_2\}\ne \{a_3, a_4\}$, one has
$x_1\ne x_2$ and $y_1\ne y_2$. By symmetry we may assume that $x_1 < x_2$ and $y_1 < y_2$ and thereby decrease the number of such tuples by a factor of $4$, i.e.   $E_\times(\A) - 2|\A|^2$ is at most $4|X|$ where
     \[X := \{(x_1, x_2; y_1, y_2): x_iy_j\in \A\;~~\forall \; i, j\in \{1, 2\}, 0 < x_1 < x_2, 0 < y_1 < y_2\}.\]

   Fix a choice of $x_1$. Since elements in $\A$ are included in the interval $[a, a+dL)$, one has that $\frac{x_2}{x_1} = \frac{x_2y_1}{x_1y_1} < 1+\frac{dL}{a}$. Together with $x_2 > x_1$, it implies $x_2\in (x_1, x_1+x_1dL/a)$. Since all elements in $\A$ are coprime to $d$, $x_i$ and $y_j$ are also coprime to $d$. As a result, $x_1y_1\equiv a \equiv x_2y_1 \pmod d$ implies that $x_1$ and $x_2$ are both congruent to $ay_1^{-1}$ mod $d$, and thus the number of choices for $x_2$ is at most $\lfloor\frac{x_1L}{a}\rfloor$. 
    
    Similarly,  $y_i\in [\frac{a}{x_1}, \frac{a}{x_1}+\frac{dL}{x_1})$ as $x_1y_i\in [a, a+dL)$. By the fact $y_1 < y_2$ and $ y_1 \equiv y_2 \mod d$, there is no valid choice for $(y_1, y_2)$ if $\frac{L}{x_1} \leq 1$. If $\frac{L}{x_1} > 1$, then the number of choices for $(y_1, y_2)$ is at most 
    \[\frac{1}{2}\left\lceil\frac{L}{x_1}\right\rceil\cdot \left(\left\lceil\frac{L}{x_1}\right\rceil - 1\right) \leq \frac{L^2}{x_1^2}.\]
    
    In summary, any such valid tuple satisfies $\frac{a}{L}\leq x_1 < L$. Once $x_1$ is fixed, the number of such tuples is at most $\frac{L^3}{ax_1}$. By summing over all choices of $x_1$, we conclude that \[E_\times(\A) \leq 2|\A|^2 + 4\sum_{x_1 < L}\frac{L^3}{ax_1} \leq 2|\A|^2 + \frac{4L^3}{a}(1+\L L).\]
\end{proof}

The above lemma gives the following corollary immediately. 
\begin{corollary}\label{cor:large-a}
        Let $\P = \{a + id: 0\leq i < L\}$ be an arithmetic progression with $\gcd(a, d) = 1$ and $a > 0, d>0$. Suppose $\A\subseteq \P$. If $a = \Omega(L\log L)$, then $E_\times(\A) = O(L^2)$.
\end{corollary}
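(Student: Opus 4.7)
The plan is to apply Lemma~\ref{lem:large-a} directly and read off what the hypothesis $a = \Omega(L\log L)$ yields. Lemma~\ref{lem:large-a} furnishes
\[E_\times(\A) \leq 2|\A|^2 + \frac{4L^3}{a}(1 + \log L),\]
so there are only two terms to control. The first summand is handled immediately: since $\A \subseteq \P$ and $|\P| = L$, we have $|\A| \leq L$, and hence $2|\A|^2 \leq 2L^2 = O(L^2)$.

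For the second summand, the hypothesis $a = \Omega(L\log L)$ supplies a constant $c > 0$ with $a \geq cL\log L$ for all sufficiently large $L$. Substituting into Lemma~\ref{lem:large-a} gives
\[\frac{4L^3(1+\log L)}{a} \leq \frac{4L^2(1+\log L)}{c\log L} = O(L^2),\]
and adding the two estimates yields $E_\times(\A) = O(L^2)$, as claimed.

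I do not anticipate any real obstacle; the corollary is essentially a restatement of Lemma~\ref{lem:large-a} in the regime where its error term becomes comparable to the trivial bound $|\A|^2 \leq L^2$. In fact the threshold $a \asymp L\log L$ is forced: balancing the two summands of Lemma~\ref{lem:large-a} via $L^3(\log L)/a \asymp L^2$ recovers exactly the condition $a \asymp L\log L$, so Corollary~\ref{cor:large-a} captures the precise range in which the lemma's bound is already strong enough.
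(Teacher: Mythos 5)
Your proof is correct and is exactly the argument the paper has in mind: the paper states the corollary follows "immediately" from Lemma~\ref{lem:large-a}, and your two-line verification (trivially bounding $2|\A|^2 \le 2L^2$ and using $a \gg L\log L$ to make the error term $O(L^2)$) is the intended reasoning.
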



Our next step is to reduce the problem to the case that the elements are all square free. 
\begin{lemma}[Square-free reduction]\label{lem: squarefree}
         Let $\P = \{a+id: 0\leq i < L\}$ be an arithmetic progression with $\gcd(a, d) = 1$ and $a, d > 0$. 
         Suppose $ a+dL < \frac{\delta^2}{9}L^2$ and $d\le L$. If $\A$ is an arithmetic progression with density $\delta$, then we can find a subset $\B_0\subseteq \A$ such that $|\B_0| \gg_{\delta} |\A|$, and all elements in $\B_0$ have the same largest square factor. In particular, after dividing all elements in $\B_0$ by their common largest square factor, the resulting set, denoted by $\B$, contains only square-free elements.
\end{lemma}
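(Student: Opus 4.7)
The plan is to partition $\A$ according to the largest square factor of each of its elements and to show by a short pigeonhole argument that one class already contains a $\delta$-dependent positive fraction of $\A$. For each positive integer $s$, let $\A_s = \{n\in \A : s^2 \text{ is the largest square dividing } n\}$; the $\A_s$ form a partition of $\A$, so it is enough to show that $\sum_{s>S}|\A_s| \le |\A|/2$ for some $S = O_\delta(1)$, after which pigeonhole applied to the at most $S$ remaining classes produces a single $\A_s$ of size $\gg_\delta |\A|$ to serve as $\B_0$.

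The first step is to bound, for each $s$, the number of elements of $\P$ divisible by $s^2$. If $\gcd(s,d)>1$, then the assumption $\gcd(a,d)=1$ forces $\P\cap s^2\Z = \emptyset$, because any prime dividing $\gcd(s,d)$ would have to divide $a$. Otherwise $d$ is invertible modulo $s^2$, so the indices $i\in[0,L)$ with $s^2\mid a+id$ form a single residue class modulo $s^2$, giving at most $L/s^2+1$ such indices. Moreover, since every element of $\P$ is at most $a+dL < \tfrac{\delta^2}{9}L^2$, only $s < \tfrac{\delta}{3}L$ can contribute at all to the partition.

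Summing the bound $|\{n\in\P : s^2\mid n\}|\le L/s^2+1$ over $S < s \le \delta L/3$ gives
\[
\sum_{s>S}|\A_s| \;\le\; \sum_{S<s\le \delta L/3}\left(\frac{L}{s^2}+1\right) \;\le\; \frac{L}{S}+\frac{\delta L}{3}.
\]
Choosing $S=\lceil 6/\delta\rceil$ makes the right-hand side at most $\delta L/2\le |\A|/2$. Hence at least half of $\A$ lies in $\bigcup_{s\le S}\A_s$, and pigeonhole produces some $s\le S$ with $|\A_s|\ge |\A|/(2S)\gg_\delta |\A|$. Setting $\B_0:=\A_s$ for this $s$ and $\B:=\{n/s^2 : n\in\B_0\}$ finishes the proof, since by definition every $n\in\B_0$ has $s^2$ as its exact largest square divisor, so $n/s^2$ is square-free.

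I do not foresee a real obstacle here: the only quantitative input that is genuinely used is the hypothesis $a+dL < \tfrac{\delta^2}{9}L^2$, which simultaneously restricts the range of $s$ that can contribute and lets the $O(1)$ per-$s$ slack in the AP count be absorbed into $\delta L/3$. The coprimality assumption $\gcd(a,d)=1$ is what makes the clean $L/s^2+1$ count available, and the bound $d\le L$ in the statement plays no role in this step (it is presumably used elsewhere in the reduction). The only care needed is that the constant $\delta^2/9$ is small enough that $L/S+\delta L/3 \le \delta L/2$ can be solved by some $S = O(1/\delta)$ independent of $L$.
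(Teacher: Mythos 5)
Your proof is correct and takes essentially the same approach as the paper: you partition by the largest square factor, use the coprimality $\gcd(a,d)=1$ to show each $s^2$ divides at most $L/s^2+1$ elements of $\P$, bound the total contribution of large $s$ (which the paper isolates as a separate Lemma~\ref{lem:large-sqs} with threshold $T=\lceil 3/\delta\rceil$), and then pigeonhole over the finitely many remaining square classes. The constants differ slightly ($S=\lceil 6/\delta\rceil$ versus the paper's $T=\lceil 3/\delta\rceil$) but the argument is the same.
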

To prove this, we use the Pigeonhole Principle. We first show that most elements in $\P$ with relatively small $a$ and $d$ are not divisible by large perfect squares.
\begin{lemma}\label{lem:large-sqs}
        Let $\P = \{a+id: 0\leq i < L\}$ be an arithmetic progression with $\gcd(a, d) = 1$ and $a, d > 0$. Let $T > 0$ be a positive integer. The number of elements in $\P$ divisible by a square greater than $T^2$ is at most $\sqrt{a+dL} + \frac{L}{T}$.
\end{lemma}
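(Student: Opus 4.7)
The plan is to apply a union bound over all integers $s$ with $s^2 > T^2$ such that $s^2$ divides some element of $\P$. First I would restrict the range of relevant $s$: any element of $\P$ is at most $a + dL - d + 1 \le a + dL$, so $s^2 \le a+dL$ gives $s \le \sqrt{a+dL}$; together with $s > T$, this yields at most $\sqrt{a+dL}$ admissible values of $s$.

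Next I would exploit the coprimality hypothesis $\gcd(a,d)=1$ to count solutions per $s$. If some prime $p$ divided both $s$ and $d$, then from $s^2 \mid a+id$ and $p \mid id$ one would get $p \mid a$, contradicting $\gcd(a,d)=1$; hence $\gcd(s^2, d)=1$. The congruence $a+id \equiv 0 \pmod{s^2}$ then has exactly one solution in $i$ modulo $s^2$, so the number of $i \in [0,L)$ with $s^2 \mid a+id$ is at most $\lfloor L/s^2 \rfloor + 1 \le L/s^2 + 1$.

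Summing the per-$s$ bound and using the standard tail estimate $\sum_{s > T} 1/s^2 \le \int_T^{\infty} dx/x^2 = 1/T$ alongside the count of admissible $s$ from the first step, I would get
\[
\sum_{T < s \le \sqrt{a+dL}} \Bigl(\frac{L}{s^2} + 1\Bigr) \;\le\; \frac{L}{T} + \sqrt{a+dL},
\]
which is exactly the claimed bound. There is no real obstacle here; the argument is essentially a clean union bound, and the only subtlety is the coprimality step, which uses $\gcd(a,d)=1$ to guarantee that each square $s^2$ in the relevant range divides at most $L/s^2 + 1$ elements of $\P$ rather than behaving erratically.
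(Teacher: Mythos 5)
Your proof is correct and follows essentially the same approach as the paper: restrict to $T < s \le \sqrt{a+dL}$, use $\gcd(a,d)=1$ to conclude $\gcd(s,d)=1$ so the congruence $a+id\equiv 0\pmod{s^2}$ has at most one solution per block of $s^2$ consecutive indices, and then sum the resulting bound $L/s^2+1$ and use the tail estimate $\sum_{s>T}1/s^2\le 1/T$. The only cosmetic difference is that the paper phrases the per-$s$ count via $t^2\mid(i_1-i_2)$ and $\lceil L/t^2\rceil$ rather than uniqueness of the solution mod $s^2$, which is the same calculation.
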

\begin{proof}
    Let $n = \lfloor\sqrt{a+dL}\rfloor$. Clearly if an element in $\P$ is divisible by $t^2$, then $t\leq n$ (or otherwise $t^2$ is larger than any element in $\P$). We claim that for each $t$, the number of elements in $\P$ divisible by $t^2$ is at most $\lceil\frac{L}{t^2}\rceil$. Since $\gcd(a, d) = 1$,  if $t^2$ divides $a+i_1d$ and $a+i_2d$, then $\gcd(t, d) = 1$. This implies that $t^2|(i_1-i_2)$, so $t^2$ divides at most one element in every $t^2$ consecutive elements in $\P$. We conclude that the number of elements in $\P$ divisible by a square greater than $T^2$ is at most
    \[\sum_{t=T+1}^{n}\left\lceil\frac{L}{t^2}\right\rceil \leq \sum_{t=T+1}^{n}\left(1 + \frac{L}{t^2} \right)\leq n + L\sum_{t=T+1}^{n}\frac{1}{t^2} \leq n+\frac{L}{T}.\]
    We conclude with the desired bound by noting that $n \leq \sqrt{a+dL}$.
\end{proof}
\begin{proof}[Proof of Lemma~\ref{lem: squarefree}]
As $a+dL < \frac{\delta^2}{9}L^2$, by taking $T = \lceil\frac{3}{\delta}\rceil$ in Lemma~\ref{lem:large-sqs}, there are at least $\frac{\delta}{3}L$ elements left in $\A$ that are not divisible by squares greater than $T^2$. By the Pigeonhole Principle, there exists some $1\leq t \leq T$ such that there are at least $\frac{\delta L}{3T} \geq \frac{\delta^2}{18}L$ elements in $\A$ that are divisible by $t^2$ and are square-free after divided by $t^2$. We shall let $\B_0$ be the set of such elements.
\end{proof}

\begin{proof}[Proof of Theorem~\ref{thm: A energy 2log2-1} assuming Theorem~\ref{thm:reduced}]
First, we may assume that $L \geq |\A|$ is sufficiently large with respect to $\delta$, or otherwise we may choose $\A' = \A$ and we have $E_\times(\A) \ll |\A|^4 \ll_\delta 1$.

We finish proof by combining all results from above. Our setup is the following. Let $\P = \{a+id: 0\leq i < L\}$ be an arithmetic progression with common difference $d$ and length $L$. Let $\A\subseteq \P$  with density $\frac{|\A|}{|\P|} \geq \delta$ for some constant $\delta \in (0, 1)$. We begin by noticing the following simple observation.  
\subsection*{Observation} Let $\phi$ be a non-trivial dilation operation (multiply by a nonzero constant) on the set $\P$, then the operation $\phi$ preserves the multiplicative energy of any subset of $\P$.  
If Theorem~\ref{thm: A energy 2log2-1} holds for $(\A_0, \P_0)$ where $\phi(\A_0)\subseteq \A$ and arithmetic progression $\phi(\P_0) \subseteq \P$ with $|\A_0|\gg_{\delta} |\A|$, then the conclusion must also hold for $(\A, \P)$.

Based on the above observation, we do a set of reduction steps to reduce to $(\A', \P')$. At the same time, we trace the change of density from $\delta$ to $\delta'$ in each step and $L'$ to $L$ in each step.

\noindent
\textbf{Step 1: Assume all elements are positive.} 
Without loss of generality, we may assume that at least $\frac13$ of the elements in $\A$ are positive, or we shall replace $\A$ by $-\A$. Let $\A_+ = \A\cap \Z_{>0}$ be the set of positive elements.
Then it is sufficient to prove the statement for $\A_{+}$, i.e., one can find a subset of $\A_{+}$ with small multiplicative energy $\ll L^{2}(\log L)^{1-2\log2+o(1)}$. 

In this step, we set $\A_1 = \A_+$. It is clear that by doing this, the density $\delta_1 > \delta/3$. We also make the enclosing arithmetic progression contain only positive elements. This does not decrease the density $\delta_1$, while the length of the enclosing arithmetic progression becomes $L_1 : = |\P \cap \Z_{>0}| \geq |\A_1| = \Omega(\delta L)$.
In summary, we can find an arithmetic progression $\P_1 = \{a_1+id_1: 0 \leq i < L_1\}$ and a subset $\A_1\subseteq \P_1$ such that $E_\times(\A) \geq E_\times(\A_1)$, $a_1, d_1 > 0$, and $L_1 = \Omega(\delta L)$, $\delta_1 = \frac{|\A_1|}{|\P_1|} \geq \delta/3$.

\noindent
\textbf{Step 2: Divide all elements by GCD.}
We may divide all elements in $\A_1$ by $\gcd(a_1, d_1)$, giving set $A_2$.
The density and the length do not change $\delta_2=\delta_1>\delta/3$ and $L_2 = L_1 = \Omega(\delta L)$. 
The new common difference is $d_2 = \frac{d_1}{\gcd(a_1, d_1)}$ and the initial term becomes $a_2 = \frac{a_1}{\gcd(a_1, d_1)}$.
Moreover this map $(\times \gcd(a_1, d_1))$ from subsets of $\A_{2}$ to subsets of $\A_1$ preserves multiplicative energy.

\noindent
\textbf{Step 3: Assume all elements lie in dyadic interval \texorpdfstring{$[x,2x)$}{[x, 2x)}.}
Let $I_t:= \{a_2+id_2:L_2/2^{t+1}\leq i < L_2/2^t\}$ for $t\ge 0$
and consider the dyadic partition (ignore the single element $\{a\}$)
\[\{a_2+id_2:0 <  i < L_2\} = \bigcup_{t\ge  0}I_t .\] 
We may set $t_0 = O(1 + \log \frac{1}{\delta_2})$ such that $\sum_{t\ge  t_0} |I_t| \le \delta_2 (L_2-1) /2$. It implies that
\[\left|\A_2 \cap \bigcup_{t < t_0} I_t \right|\ge \delta_2 (L_2-1)/2.  \]
This means that $\A_2$ contains at least $\delta_2/2$ fraction of elements in $\bigcup_{t < t_0} I_t$. In particular there exists some $0\leq t_1 < t_0$ such that
\[|\A \cap I_{t_1} | \ge \frac{\delta_2}{2}|I_{t_1}| .  \]
Note that $I_{t_1} = \{a_2 + id_2: L_2/2^{t_1+1} \leq i < L_2/2^{t_1}\}$ can be written as an arithmetic progression $\P_3 = \{a_3+id_3:0\leq i < L_3\}$ with $a_3 \geq a_2 + d_2L_2/2^{t_1+1}$, $d_3 = d_2$, and $L_3 \leq L_2/2^{t_1+1}$. We conclude that we can find sets $\A_3,\P_3$ such that $\P_3 = \{a_3+ i d_3:0\le i < L_3\}$ with $a_3 > d_3L_3$, $L_3 = \Omega(L_2/2^{t_0}) = \delta_2^{O(1)}L_2$ and $\A_3= \A_2\cap \P_3$ with $|\A_3| \geq \frac{\delta_2}{2}L_3 = : \delta_3 L_3$. 
Thus, by the observation made at beginning, it suffices to study $(\A_3, \P_3)$.

\noindent
\textbf{Step 4: Eliminate extremal cases.} After step $3$, we may assume that $d_3L_3 < a_3$. Applying Lemma~\ref{lem:large-a},
we may assume that $d_3L_3 < a_3 < L_3 \log L_3$, otherwise we directly have $E_{\times}(\A_3) \ll_{\delta} |\A_3|^{2}$.

\noindent
\textbf{Step 5: Assume all elements are square-free.}
After reduction in step 2, the condition in Lemma~\ref{lem: squarefree} is satisfied and by applying Lemma~\ref{lem: squarefree}
we get a set $\A_5$ with size at least $\frac{\delta_3^{2}}{18}L_3$ containing only square-free elements. Moreover, there is some constant $T = O(1/\delta)$ such that $T^2\A_5\subseteq \A_3$ and $\gcd(T, d_3) = 1$. Clearly $\A_5$ is also contained in an arithmetic progression of length $L_5 = \Theta(L_3/T^2) = \Omega(\delta^2L_3)$. Also note that $L_5 \leq L_3$, so we have $\delta_5 \geq \frac{\delta_3^{2}}{18}$. 
 By applying the observation made at the beginning again, it is valid to make the deduction. 
 
\noindent
\textbf{Summary.}
Combing the five reduction steps above, we conclude that either we find a subset $\A'\subseteq \A$ with $|\A'|\gg_{\delta} |\A|$ and $E_{\times}(\A')\ll_{\delta} |\A|^{2}$ (which is more than what we need), or we get a set $\B=\A_5$ containing only square-free integers, with density $\delta_5 = \Omega(\delta^{2})$ in an arithmetic progression $\P' = \{a'+id':0\le i < L'\}$ satisfying that $L'\L L' > a' > d'L'>0$ where $L' = L_5 = \Omega( \delta^{O(1)} L)$. Applying Theorem~\ref{thm:reduced} on $\B$, there exists a subset $\B'\subseteq \B$ with size $|\B'|\gg |\B|$ such that
\[E_{\times}(\B') \ll |\B|^{2}(\log |\B|)^{2\log 2-1+o(1)}.\]
Moreover, it is guaranteed that there is a nonzero integer $m$ such that $m\B:= \{m\}\cdot \B \subseteq \A$.
Then $\A' = m\B'$ is a subset of $\A$ of size at least $\delta_5 L' \gg_{\delta} |\A|$, and
\[E_{\times }(\A') = E_{\times}(\B') \ll |\B|^{2}(\log |\B|)^{2\log 2-1+o(1)}\ll_{\delta}  |\A|^{2}(\log |\A|)^{2\log 2-1+o(1)},\]
as desired.\end{proof}


    

\section{Proof of Theorem~\ref{thm:reduced}:  the essential case}\label{sec:square-free}
In this section we prove Theorem~\ref{thm:reduced}. Let us restate the theorem for convenience.
\reduced*

In fact we prove the following more explicit and stronger statement.
\begin{theorem}\label{thm:reduced stronger}
For any $\delta, \epsilon \in (0, 1]$, there exists $L_0 = L_0(\delta, \epsilon)$ such that the following holds. Let $\P = \{a+id: i\in 0\leq i < L\}$ be an arithmetic progression with common difference $d$ and length $L$, with $L > L_0, d > 0$, $L\log L > a \geq dL$, and $\gcd(a, d) = 1$. If $\A\subseteq \P$ is a subset of size at least $\delta L$ containing only square-free elements, then there exists a subset $\A'\subseteq \A$ with $|\A'|\geq |\A|/2$ such that

\begin{equation}\label{eqn:thm-reduced-stronger}
 E_\times(\A') \ll L^2 \left( 1+\frac{L}{a}(\log L)^{2\log 2-1 + \epsilon} \right).
\end{equation}
\end{theorem}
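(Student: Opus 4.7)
The plan is to refine the counting argument of Lemma~\ref{lem:large-a}, which bounded the off-diagonal contribution to $E_\times(\A)$ by $O((L^3/a)\L L)$, by replacing the $\L L$ factor with $(\L L)^{2\log 2 - 1 + \epsilon}$. The saving will come from two structural features of $\A$: its elements are square-free, and (after a mild restriction) have a concentrated prime-factor count $\omega$ of size $\LL a$. The diagonal term $2|\A|^2 = O(L^2)$ from Lemma~\ref{lem:large-a} already matches the first summand in the target bound, so the problem reduces to controlling the off-diagonal count.

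First I would pass to a subset $\A' \subseteq \A$ of size at least $|\A|/2$ consisting of those $n \in \A$ with $|\omega(n) - \LL a| \le \psi(L)$, where $\psi$ grows slowly (for instance $\psi(L) = K\sqrt{\LL L}$ with $K = K(\epsilon)$ large). This uses a Tur\'an--Kubilius / Shiu-type concentration for $\omega$ on the arithmetic progression $\P$, applicable uniformly thanks to the hypotheses $L\L L > a \ge dL$ and $\gcd(a, d) = 1$. Re-running the parameterisation of Lemma~\ref{lem:large-a} on $\A'$, the off-diagonal contribution is indexed by tuples $(x_1, x_2; y_1, y_2)$ with $x_1 < x_2$, $y_1 < y_2$, and $x_i y_j \in \A'$ for all $i, j \in \{1, 2\}$. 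Because $\A'$ contains only square-free integers, $\gcd(x_i, y_j) = 1$ for every pair, so $\omega(x_i y_j) = \omega(x_i) + \omega(y_j)$; comparing the $\omega$-values across the four products forces $\omega(x_1) = \omega(x_2) =: k$ and $\omega(y_1) = \omega(y_2) =: \ell$, with $k + \ell \in [\LL a - \psi(L), \LL a + \psi(L)]$. This rigidity is the structural heart of the argument.

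I would then stratify the off-diagonal count by $(k, \ell)$. For fixed $(k, \ell)$ and fixed $x_1$ with $\omega(x_1) = k$: the number of admissible $x_2$ is at most $Lx_1/a$ as in Lemma~\ref{lem:large-a}, while the admissible $y$'s lie in the interval $[a/x_1, (a+dL)/x_1)$, in the residue class $a x_1^{-1} \pmod d$, and satisfy $\omega(y) = \ell$. Shiu's theorem (or the short-interval Selberg--Sathe formula in an arithmetic progression) bounds the count of such $y$ by
\[ M_\ell(x_1) \ll \frac{L}{x_1}\cdot \frac{(\LL L)^{\ell-1}}{(\ell-1)!\,\L(a/x_1)}, \]
so the pair $(y_1, y_2)$ contributes at most $M_\ell(x_1)^2/2$. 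Summing first over $x_1$ with $\omega(x_1) = k$ via the standard bound $\sum_{x_1\le L,\,\omega(x_1)=k}1/x_1 \ll (\LL L)^k/k!$ (partial summation applied to Sathe--Selberg), and then over $(k, \ell)$ with $k+\ell$ concentrated at $\LL L$, a Stirling / saddle-point analysis consolidates the resulting multi-sum to $(\L L)^{2\log 2 - 1 + o(1)}$, which is absorbable into $\epsilon$ for $L$ sufficiently large.

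The principal obstacle will be to make the Shiu-type bound on $M_\ell(x_1)$ valid uniformly across the full range of $(x_1, \ell, d)$. When $x_1$ is near $L$ the effective interval $[a/x_1, (a+dL)/x_1)$ collapses to length $\asymp d$, at the edge of Shiu's regime; in this range one probably needs either a direct sieve upper bound for $\omega(y) = \ell$ in a short progression, or a truncation of $x_1$ at a slightly smaller level with the remainder handled by a cruder bound. Careful accounting of the Stirling error terms is also required so that the $o(1)$ exponent from the saddle-point analysis genuinely fits inside $\epsilon$; this is the point where the choice of $\psi$, and possibly a two-scale decomposition of the $\omega$-restriction in the first step, must be made with care.
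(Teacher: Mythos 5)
Your overall template matches the paper's: restrict to a subset where $\omega$ is controlled, exploit square-freeness so $\omega$ is additive across the factorization $x_iy_j$, and feed Shiu-type estimates into the parameterisation from Lemma~\ref{lem:large-a}. However there are two concrete gaps, one of which is a genuinely false step.

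The ``rigidity'' claim is incorrect as stated. You restrict $\A'$ to $n$ with $|\omega(n)-\LL a|\le\psi(L)$ and then assert that, because the four products $x_iy_j$ all lie in $\A'$, the equalities $\omega(x_1)=\omega(x_2)$ and $\omega(y_1)=\omega(y_2)$ are ``forced.'' They are not: from $\omega(x_iy_j)=\omega(x_i)+\omega(y_j)\in[\LL a-\psi,\LL a+\psi]$ you only deduce $|\omega(y_1)-\omega(y_2)|\le 2\psi$ and $|\omega(x_1)-\omega(x_2)|\le 2\psi$. So the stratification must run over a $2\psi\times2\psi$ window of pairs $(k,\ell)$ rather than over a single diagonal, which re-introduces a factor $\asymp\psi^2\asymp\LL L$ that your Stirling/saddle-point consolidation would then need to explicitly cancel or absorb. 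The paper sidesteps stratification entirely: it only imposes the one-sided bound $\omega(n)\le T=\LL(a+dL)+(\LL(a+dL))^{2/3}$, which gives $\omega(x_1)+\omega(x_2)+\omega(y_1)+\omega(y_2)\le 2T$ for every off-diagonal tuple, and inserts the Rankin-type weight $2^{-\omega(x_1)-\omega(x_2)-\omega(y_1)-\omega(y_2)+2T}\ge 1$. The factor $2^{2T}=(\log L)^{2\log2+o(1)}$ produces the exponent directly, while each weighted sum $\sum 2^{-\omega(\cdot)}$ over a short progression is handled by a single application of Shiu. No saddle point is needed.

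The second gap is the range of $x_1$, which you explicitly flag as ``the principal obstacle'' but do not resolve. The paper's definition of $X$ includes the extra symmetry $x_1\le y_1$ (at the cost of an $8$ in place of $4$ in $E_\times(\A')\le 2|\A'|^2+8|X|$), and since $x_1^2\le x_1y_1<a+dL$ this caps $x_1<\sqrt{a+dL}\lesssim\sqrt{2L\log L}$. Over that range the effective $y$-interval $[a/x_1,(a+dL)/x_1)$ has length $dL/x_1\gtrsim dL/\sqrt{a+dL}$, which comfortably satisfies the hypotheses of Shiu's theorem (both $k<y^{1-\alpha}$ and $x^\beta<y$ with $\alpha=\beta=1/2$). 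Without that symmetry, $x_1$ can grow to roughly $L$, where the $y$-interval collapses to length $\asymp d$ and Shiu's theorem is genuinely inapplicable; your suggested ``truncation with a cruder bound'' would have to recover exactly this trick.

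Finally, a smaller but worth-noting point: the paper further splits off the range $x_1\le ea^2/L^2$ (the set $X_1$) where the number of admissible $x_2$ is zero or at most one, bounded directly by $\sum_{x_1}L^3/(ax_1)\ll L^2$. This keeps $\log x_1$ bounded below when applying Shiu to $X_2$; your sketch leaves $x_1$ starting at $1$, where $(\log x_1)^{-1/2}$ in your $M_\ell$ estimate is singular.
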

 As $a \geq dL \geq L$, \eqref{eqn:thm-reduced-stronger} implies \eqref{eqn:thm-reduced}. Thus, Theorem~\ref{thm:reduced} follows from Theorem~\ref{thm:reduced stronger}.
 To prove Theorem~\ref{thm:reduced stronger}, we need the following result of Shiu \cite{Shiu80}.
\begin{theorem}[Theorem 1 in \cite{Shiu80}]\label{lem:Shiu}
		Let $f(n)$ be a non-negative multiplicative function such that $f(p^{\ell}) \le A_1^{\ell}$ for some positive constant $A_1$ and for any $\ee>0$, $f(n)\le A_2 n^{\ee}$ for some $A_2=A_2(\ee)$. Let $\alpha, \beta \in (0,1/2]$, integer $a$ satisfying $\gcd(a, k) = 1$. Then as $x\to \infty$ we have
		\[\sum_{ \substack{x-y \leq n < x\\ n \equiv a \Mod k}} f(n) \ll \frac{y}{\phi(k)}\frac{1}{\log x} \exp\left (\sum_{p\leq x, p\nmid k} \frac{f(p)}{p} \right), \]
		provided that $k<y^{1-\alpha}$ and $x^{\beta} < y < x$, where the implicit constant depends only on $A_1, A_2, \alpha, \beta$ and the summation on the right hand side is taken over prime $p$.
\end{theorem}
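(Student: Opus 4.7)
The plan is to refine the elementary counting of Lemma~\ref{lem:large-a} by restricting to a subset of $\A$ with typical prime-factor structure and then evaluating the resulting multiplicative sum via Shiu's theorem (Lemma~\ref{lem:Shiu}). I would begin exactly as in the proof of Lemma~\ref{lem:large-a}, parameterizing off-diagonal multiplicative quadruples contributing to $E_\times(\A)$ by tuples $(x_1,x_2,y_1,y_2)$ with $0<x_1<x_2$, $0<y_1<y_2$, and $x_iy_j\in\A$ for all $i,j\in\{1,2\}$. The arithmetic hypotheses $\gcd(a,d)=1$ and $x_iy_j\in[a,a+dL)$ force $x_1\equiv x_2\pmod d$, $y_1\equiv y_2\pmod d$, and $x_2/x_1,\,y_2/y_1<1+dL/a$. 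The hypothesis $dL\le a<L\log L$ gives $d\le \log L$, comfortably inside the range $k<y^{1-\alpha}$ in which Shiu's theorem applies (with $y=dL$). The diagonal quadruples contribute $O(L^2)$, so it suffices to bound the off-diagonal count $|X|$.

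Next, I pass to a subset $\A'\subseteq\A$ of size $\ge|\A|/2$ whose elements have typical anatomy: $\omega(n)=(1+o(1))\log\log L$ and the divisors of $n$ are not atypically ``clumped'' in short multiplicative intervals in the sense of Ford and Tenenbaum. A Tur\'an--Kubilius-type variance estimate, provable by applying Lemma~\ref{lem:Shiu} to $f(n)=(\omega(n)-\log\log L)^2$, gives that the exceptional set has density $o_\delta(1)$; further Ford-type restrictions on divisor distribution likewise remove only $o(|\A|)$ additional elements. This construction is where the quantitative threshold $L_0(\delta,\epsilon)$ enters.

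The heart of the argument is to bound $|X|$ (restricted to $\A'$) by a sum of the form $(L^2/a)\sum_{n\in\A'}g(n)$, where the prefactor $L^2/a$ comes from the multiplicative interval widths already present in the proof of Lemma~\ref{lem:large-a}, and $g$ is a multiplicative function weighting each $n$ by the number of factorizations $n=x_1y_1$ whose multiplicative shifts $x_1y_2$ and $x_2y_1$ also land in $\A$. The point is that while $n$ has $2^{\omega(n)}$ square-free factorizations, only a $(\log L)^{-\eta}$ fraction typically give rise to valid shifts---an arithmetic-progression analog of Ford's theorem on the set $H(x,y,z)$ of integers with a divisor in a short interval---so that $g$ has local behavior making Shiu's theorem yield
\[
\sum_{n\in\A'}g(n)\;\ll\;\frac{dL}{\phi(d)}\cdot(\log L)^{2\log 2-1+\epsilon}\;\ll\;L(\log L)^{2\log 2-1+\epsilon}.
\]
Combining with the interval-width prefactor and the diagonal contribution yields $E_\times(\A')\ll L^2+(L^3/a)(\log L)^{2\log 2-1+\epsilon}$, which is the bound claimed in Theorem~\ref{thm:reduced stronger}.

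The main obstacle will be constructing the multiplicative function $g$ and verifying the local bound that makes Shiu's theorem output the precise exponent $2\log 2-1$ rather than the cruder $1$ coming from a naive $f=2^\omega$. This requires faithfully porting Ford's anatomy-of-integers analysis (in which the related constant $\theta=1-(1+\log\log 2)/\log 2$ arises from entropy considerations on the distribution of $\omega(n)$) to arithmetic progressions with modulus $d\le\log L$; the square-freeness of $\A$ keeps the combinatorics clean by forcing $\gcd(x_iy_j,d)=1$ and giving exactly $2^{\omega(n)}$ factorizations per $n\in\A'$. The $\epsilon$ loss in the exponent absorbs the $(\log\log L)^{O(1)}$ technicalities inherent in the Tur\'an--Kubilius and Ford bounds.
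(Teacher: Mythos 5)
Your proposal does not prove the statement at hand. The statement is Shiu's theorem itself (Theorem~1 of \cite{Shiu80}): a Brun--Titchmarsh-type upper bound for the mean value of a non-negative multiplicative function $f$ over a short segment of an arithmetic progression, $\sum_{x-y\le n<x,\ n\equiv a\;(\mathrm{mod}\;k)}f(n)\ll \frac{y}{\phi(k)\log x}\exp\bigl(\sum_{p\le x,\,p\nmid k}f(p)/p\bigr)$. In the paper this result is imported by citation and then used as a black box. What you have sketched instead is an argument for Theorem~\ref{thm:reduced stronger} (the multiplicative-energy bound for dense square-free subsets of arithmetic progressions): you parameterize off-diagonal energy quadruples, restrict to elements of typical anatomy, and then \emph{invoke} Lemma~\ref{lem:Shiu} to evaluate the resulting multiplicative sums. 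As an argument for the quoted statement this is circular — you are assuming the very estimate you were asked to establish — and as a matter of content it addresses a different theorem of the paper altogether.

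A genuine proof of the statement would have to follow the lines of Shiu's original analysis, none of which appears in your sketch: write each $n$ in the segment as $n=n_1n_2$ with $n_1$ its $z$-smooth part for a suitable threshold $z$; dispose of the integers whose smooth part is large (or which have a large prime power divisor) by Rankin's trick and bounds for smooth numbers, using $f(n)\le A_2 n^{\ee}$ to control the weights; and for the main portion fix the smooth part $m$, bound $f(n)\le f(m)f(n/m)$-type contributions using $f(p^{\ell})\le A_1^{\ell}$, and count the cofactors in the progression by a sieve (Brun--Titchmarsh) upper bound, which is where the factor $\frac{y}{\phi(k)\log x}$ and the constraints $k<y^{1-\alpha}$, $x^{\beta}<y<x$ enter; summing $f(m)/m$ over smooth $m$ coprime to $k$ then produces the Euler-product factor $\exp\bigl(\sum_{p\le x,\,p\nmid k}f(p)/p\bigr)$. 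Separately, even read as a sketch of Theorem~\ref{thm:reduced stronger}, your middle step — constructing a multiplicative function $g$ that counts factorizations with valid shifts and has the local behavior needed to extract the exponent $2\log 2-1$ from Shiu's bound — is precisely the part that is asserted rather than carried out; the paper instead gets this exponent by restricting to elements with $\omega(n)\le \log\log(a+dL)+(\log\log(a+dL))^{2/3}$ and weighting each quadruple by $2^{-\omega(x_1)-\omega(x_2)-\omega(y_1)-\omega(y_2)+2T}$, which reduces everything to Shiu's theorem applied to the single function $2^{-\omega(\cdot)}$.
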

Let $\omega(n)$ be the number of distinct prime factors of $n$.
Consider the family of nonnegative multiplicative functions $M = \{z^{\omega(n)}: z\in (0, \upperz]\}$. Note that Theorem~\ref{lem:Shiu} is applicable to any $f\in M$ with a universal choice of $A_1$ and $A_2$, by observing that $f(p^\ell) \leq \upperz$, and $f(n) \leq \upperz^{\omega(n)} = n^{O(1/\log\log n)}$. To evaluate the summation inside the exp on the right hand side, we need the following estimate.
\begin{theorem}[Merten's estimate, e.g. see {\cite[Theorem 1.10]{Tenenbaum}}]\label{thm:merten}
For $x \geq 2$,
$\sum_{p\leq x}\frac{1}{p} = \LL x + O(1).$
\end{theorem}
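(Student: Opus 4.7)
The plan is to derive Mertens' estimate from the weighted prime sum $\sum_{p\le x}\frac{\log p}{p} = \log x + O(1)$ (often called Mertens' first theorem) by partial summation. I would first establish this weighted estimate and then convert to the statement about $\sum 1/p$.

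For the weighted sum, my approach is the classical one of evaluating $\log(n!)$ in two ways. On the one hand, Stirling's formula gives $\log(n!) = n\log n - n + O(\log n)$. On the other hand, Legendre's formula yields
\[
\log(n!) = \sum_{p\le n}\Bigl(\sum_{k\ge 1}\bigl\lfloor n/p^{k}\bigr\rfloor\Bigr)\log p
= n\sum_{p\le n}\frac{\log p}{p} + E(n),
\]
where the error $E(n)$ from replacing $\lfloor n/p\rfloor$ by $n/p$ is bounded by $\sum_{p\le n}\log p\ll n$ via a Chebyshev-type estimate, and the contribution from the higher powers $k\ge 2$ is bounded by $n\sum_{p}\frac{\log p}{p(p-1)}\ll n$. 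Comparing the two evaluations and dividing by $n$ gives $\sum_{p\le n}\frac{\log p}{p}= \log n + O(1)$, and extending from integer $n$ to real $x\ge 2$ is trivial.

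For the transition to $\sum 1/p$, I would apply Abel summation with the weight $f(t)=1/\log t$. Writing $T(t)=\sum_{p\le t}\frac{\log p}{p}$, so that $T(t)=\log t + O(1)$, partial summation gives
\[
\sum_{2\le p\le x}\frac{1}{p} \;=\; \frac{T(x)}{\log x}\;+\;\int_{2}^{x}\frac{T(t)}{t(\log t)^{2}}\,\d t.
\]
The boundary term equals $1 + O(1/\log x)$. Substituting $T(t)=\log t + O(1)$ into the integral yields
\[
\int_{2}^{x}\frac{\d t}{t\log t} \;+\; O\!\left(\int_{2}^{x}\frac{\d t}{t(\log t)^{2}}\right)
\;=\; \log\log x - \log\log 2 + O(1).
\]
Combining, $\sum_{p\le x}\frac{1}{p} = \log\log x + O(1)$, as desired.

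The only delicate point is bookkeeping the Chebyshev-type estimate $\sum_{p\le n}\log p = O(n)$, which itself is elementary via the middle binomial coefficient $\binom{2n}{n}$: every prime $p\in(n,2n]$ divides $\binom{2n}{n}\le 4^n$, giving $\sum_{n<p\le 2n}\log p\le 2n\log 2$, and iterating/telescoping yields the desired bound. No step is substantive in difficulty, so I expect no genuine obstacle beyond careful tracking of error terms in the two applications of partial summation.
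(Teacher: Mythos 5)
Your proof is correct: it is the classical argument (Mertens' first theorem via the two evaluations of $\log(n!)$ with a Chebyshev bound, followed by partial summation), which is essentially the proof given in the cited reference. The paper itself offers no proof of this statement — it simply quotes it from Tenenbaum's book — so there is nothing further to compare.
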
 

Using these, we estimate the number of elements in $\P$ with a large number of distinct prime factors.

\begin{lemma}
        Let $\P = \{a+id: 0\leq i < L\}$ be an arithmetic progression satisfying $L^\Lpower > a + dL > a > 0$ and $\gcd(a, d) = 1$. As $a+dL\to\infty$, if $t > 0$ satisfies $t = o(\log\log (a+dL))$, then
        \[|\{n\in \P: \omega(n) \geq \log\log(a+dL) + t\}| \ll \frac{dL}{\phi(d)}\exp\left(-\left(\frac{1}{2}+o(1)\right)\frac{t^2}{\log\log (a+dL)}\right).\]
\end{lemma}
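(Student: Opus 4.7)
The plan is to apply Markov's inequality with the multiplicative function $z^{\omega(n)}$ for a carefully chosen $z\in(1,\upperz]$. For any $z>1$ and any threshold $K$,
\[
|\{n\in\P:\omega(n)\ge K\}|\le z^{-K}\sum_{n\in\P}z^{\omega(n)},
\]
so the task reduces to bounding $\sum_{n\in\P}z^{\omega(n)}$ via Shiu's theorem, then choosing $z$ optimally.

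First I would verify the hypotheses of Shiu's theorem (Theorem~\ref{lem:Shiu}) for $f(n)=z^{\omega(n)}$ applied to $\P$, written as $\{n\in[a,a+dL):n\equiv a\pmod d\}$, i.e.\ with parameters $k=d$, $y=dL$, $x=a+dL$. The bounds $f(p^\ell)\le\upperz$ and $f(n)\le n^{O(1/\LL n)}$ let one invoke Shiu with constants $A_1,A_2$ uniform in $z\in(1,\upperz]$. The assumption $a+dL<L^{\Lpower}$ forces $d<L$, so the conditions $k<y^{1-\alpha}$ and $x^\beta<y<x$ reduce to $d^\alpha<L^{1-\alpha}$ and $L^{2\beta-1}<d$, both of which hold for any fixed $\alpha,\beta$ slightly below $1/2$ once $L$ is large enough.

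Combining Shiu's theorem with the Mertens estimate (Theorem~\ref{thm:merten}),
\[
\sum_{p\le M,\,p\nmid d}\frac{z}{p}\le z\,\LL M+O(1),\qquad M:=a+dL,
\]
I would obtain
\[
\sum_{n\in\P}z^{\omega(n)}\ll\frac{dL}{\phi(d)}\cdot\frac{1}{\log M}\exp\!\bigl(z\,\LL M+O(1)\bigr)\ll\frac{dL}{\phi(d)}(\log M)^{z-1},
\]
with implicit constant uniform in $z\in(1,\upperz]$.

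Then I would optimize by setting $z=1+t/\LL M$; since $t=o(\LL M)$, this $z$ lies in $(1,\upperz]$ for large $M$. Plugging in and Taylor-expanding $\log(1+t/\LL M)=t/\LL M-t^2/(2(\LL M)^2)+O(t^3/(\LL M)^3)$ gives
\[
z^{-K}(\log M)^{z-1}=\exp\!\left(-K\log z+(z-1)\LL M\right)=\exp\!\left(-\frac{t^2}{2\LL M}+O\!\left(\frac{t^3}{(\LL M)^2}\right)\right).
\]
Because $t=o(\LL M)$ forces $t^3/(\LL M)^2=o(t^2/\LL M)$, the exponent collapses to $-\bigl(\tfrac12+o(1)\bigr)t^2/\LL M$, which, multiplied by the prefactor $dL/\phi(d)$, yields exactly the asserted bound.

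The principal (mild) difficulty is keeping Shiu's error terms uniform in $z$ as $z\to 1^+$, and being honest about the degenerate regime in which $t$ is bounded: there the Markov optimization is vacuous, but the trivial bound $|\{\cdots\}|\le L\le dL/\phi(d)$ already suffices, since $\exp(-(\tfrac12+o(1))t^2/\LL M)=1+o(1)$ in that range.
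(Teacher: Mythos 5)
Your proposal is correct and mirrors the paper's proof almost step for step: the same Markov bound $|\{n\in\P:\omega(n)\ge K\}|\le z^{-K}\sum_{n\in\P}z^{\omega(n)}$, the same invocation of Shiu's theorem with $(x,y,k)=(a+dL,dL,d)$ plus Mertens, the same choice $z=1+t/\LL(a+dL)$, and the same Taylor expansion producing the exponent $-\bigl(\tfrac12+o(1)\bigr)t^2/\LL(a+dL)$. The only cosmetic differences are that the paper writes the optimization in terms of $\lambda=z-1$ and fixes $\alpha=\beta=1/2$ in Shiu's theorem rather than "slightly below," and it does not explicitly address the bounded-$t$ regime, which (as you note) is trivially absorbed anyway.
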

\begin{proof}
    Let $\ell = \log\log (a+dL) + t$ and $z = \frac{\ell}{\log\log (a+dL)} = 1+o(1) \in (1, \upperz]$. As $z > 1$, we use bounds $z^{\omega(n)-\ell} \geq 1$ when $\omega(n) \geq \ell$ and $z^{\omega(n)-\ell} > 0$ otherwise, to get that
    \[|\{n\in \P: \omega(n) \geq \ell\}| \leq \sum_{n\in \P}z^{\omega(n)-\ell} = z^{-\ell}\sum_{n\in \P}z^{\omega(n)}.\]
    As $\ell = z\log\log (a+dL)$, we have $z^{-\ell} = e^{-z\log z\log\log (a+dL)} = (\log (a+dL))^{-z\log z}$. Note that we can write $\P = \{a \leq n < a+dL: n\equiv a\pmod d\}$. By $L^2 > a+dL > a > 0$, we may take $x = a+dL$, $y = dL$, and $k = d$, and they satisfy the conditions for Theorem~\ref{lem:Shiu} with $\alpha = \beta = 1/2$. It implies
    \[z^{-\ell}\sum_{n\in \P}z^{\omega(n)} \ll\frac{dL}{\phi(d)}(\log(a+dL))^{z-1-z\log z}.\]
    The implicit constant here is uniform for all $z\in (1, 2]$. If we write $\lambda := z-1 = o(1)$, then we have
    \[-1-z\log z+z = -1 - (1+\lambda)\log(1+\lambda) + (1+\lambda) = -\frac{\lambda^2}{2} + O(\lambda^3) = -\left(\frac{1}{2}+o(1)\right)\lambda^2.\]
    By definition, we have $\lambda = \frac{t}{\log\log (a+dL)}$, and the desired inequality follows.
\end{proof}
Note that $\frac{d}{\phi(d)} = O(\log\log d) = O(\log\log (a+dL))$. We can choose $t = (\log\log (a+dL))^{2/3}$, such that $o(L)$ elements have more than $T = \log\log (a+dL) + (\log\log (a+dL))^{2/3}$ distinct prime factors. Therefore we have the following lemma.
\begin{lemma}\label{lem:omega}
        For any $\ee \in (0, 1]$, there exists $a_0 = a_0(\ee)$ such that the following holds. For any $a > a_0$, if $\P = \{a+id: 0\leq i < L\}$ is an arithmetic progression with common difference $d$ and length $L$ satisfying $L^\Lpower > a + dL > a > 0$ and $\gcd(a, d) = 1$, then for $T = \log\log (a+dL) + (\log\log (a+dL))^{\frac{2}{3}}$,
\begin{equation}\label{eqn: deleting}
    |\{n\in \P: \omega(n) \leq T\}| \geq (1-\ee)L.
\end{equation}        
\end{lemma}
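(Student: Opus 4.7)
The plan is to apply the previous (unnamed) lemma with the specific choice $t = (\log\log(a+dL))^{2/3}$ and show that this already forces the number of ``bad'' elements (those with $\omega(n) > T$) to be a vanishing fraction of $L$, uniformly as $a+dL \to \infty$.

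First I would verify the hypothesis $t = o(\log\log(a+dL))$, which holds trivially since $(\log\log(a+dL))^{2/3}$ grows strictly slower than $\log\log(a+dL)$. The previous lemma then yields
\[
|\{n\in \P : \omega(n) \geq \log\log(a+dL) + t\}| \;\ll\; \frac{dL}{\phi(d)} \exp\!\left(-\left(\tfrac12+o(1)\right)\frac{t^2}{\log\log(a+dL)}\right).
\]
With the chosen $t$, the ratio $t^2/\log\log(a+dL) = (\log\log(a+dL))^{1/3}$ tends to infinity, so the exponential factor is $\exp(-(\tfrac12+o(1))(\log\log(a+dL))^{1/3})$.

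Next, I would bound the prefactor $dL/\phi(d)$. Since $L^2 > a+dL$, we have $d \leq a+dL < L^2$, so the classical estimate $m/\phi(m) \ll \log\log m + 1$ gives $d/\phi(d) \ll \log\log L + 1 \ll \log\log(a+dL)$ (using $a+dL > a > a_0$ chosen large). Hence
\[
\frac{dL}{\phi(d)} \exp\!\left(-\left(\tfrac12+o(1)\right)(\log\log(a+dL))^{1/3}\right) \;\ll\; L \cdot \log\log(a+dL) \cdot \exp\!\left(-c (\log\log(a+dL))^{1/3}\right),
\]
for some absolute constant $c > 0$ once $a$ is sufficiently large. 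The product of the polynomial-in-$\log\log$ factor and the stretched-exponential decay is $o(L)$ as $a \to \infty$.

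Finally I would convert this into the quantitative statement: given $\epsilon \in (0,1]$, choose $a_0 = a_0(\epsilon)$ large enough that for all $a > a_0$, the bound above is at most $\epsilon L$. Then the complementary set $\{n \in \P : \omega(n) \leq T\}$ has size at least $(1-\epsilon)L$, as desired. No step here is really an obstacle; the only mild care needed is ensuring that the dependence on $d$ through $d/\phi(d)$ does not spoil the decay, which is handled by the crude bound $d \leq a+dL < L^2$ and Mertens-type estimates.
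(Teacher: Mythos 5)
Your proposal is correct and matches the paper's derivation essentially line for line: the paper obtains Lemma~\ref{lem:omega} as an immediate corollary of the preceding lemma by setting $t = (\log\log(a+dL))^{2/3}$, noting $\frac{d}{\phi(d)} = O(\log\log(a+dL))$, and observing that the resulting bound is $o(L)$. The only tiny remark is that the cleaner way to see $d/\phi(d) \ll \log\log(a+dL)$ is via $d < L$ (from $dL < a+dL < L^2$), but your crude bound $d < L^2$ gives the same conclusion.
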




Let $\A'$ be the 
subset of elements of $\A$ with at most $T$ distinct prime factors, i.e.
\begin{equation}\label{def: A'}
    \A': = \{a\in \A:  \omega(n)\le T = \log\log (a+dL) + (\log\log (a+dL))^{\frac{2}{3}} \}.
\end{equation}
By Lemma~\ref{lem:omega}, we have $|\A'|\ge |\A|/2$ when $a > a_0(\delta/2)$. 
\begin{proof}[Proof of Theorem~\ref{thm:reduced stronger}]
    By choosing a sufficiently large constant in \eqref{eqn:thm-reduced}, we may assume that $a > dL\geq L > |\A|$ is sufficiently large. In particular $d < \frac{a}{L} < \log L < L^{1/3}$. By Lemma~\ref{lem:omega}, we may assume that $a\ge a_0(\delta/2)$. Now at most $\ee L = 
\frac{\delta}{2}L$ elements in $\P$ have more than $T = \log\log(a+dL) + (\log\log(a+dL))^{\frac{2}{3}}$ distinct prime factors. Similar to the proof of Lemma~\ref{lem:large-a}, we estimate the size of
    \[X := \{(x_1, x_2;y_1, y_2): x_iy_j\in \A'\;~~\forall \; i, j\in \{1, 2\}, 0 < x_1 < x_2, 0 < y_1 < y_2, x_1 \leq y_1\}.\]
    By the same argument as in the proof of Lemma~\ref{lem:large-a} (here we use an extra symmetry between $x$ and $y$), we have that $E_\times(\A') \leq 2|\A'|^2 + 8|X|$.
    
    Fix $x_1$. As all elements in $\A'$ are congruent to $a$ mod $d$, they are all coprime to $d$. Therefore $x_i$ and $y_j$ are also coprime to $d$. As a result, $a\equiv x_1y_1 \equiv x_2y_1 \equiv x_1y_2 \pmod d$ implies
    that $x_2$ is congruent to $x_1$ mod $d$ and $y_1$ and $y_2$ are both congruent to $ax_1^{-1}$ mod $d$. Since all elements in $\A'$ are included in the interval $[a, a+dL)$, one has $\frac{x_2}{x_1} = \frac{x_2y_1}{x_1y_1} < 1+\frac{dL}{a}$. Since $x_2 > x_1$, we know that $x_2\in (x_1, x_1+x_1dL/a)$. As $x_2\equiv x_1\pmod d$, the number of choices for $x_2$ is at most $\lfloor\frac{x_1L}{a}\rfloor$. In particular, if $\frac{x_1L}{a} < 1$ then there is no valid tuple in $X$.
    
    Similarly, as $x_1y_i\in [a, a+dL)$, we know that $y_i\in [\frac{a}{x_1}, \frac{a}{x_1}+\frac{dL}{x_1})$. As $y_1 < y_2$ and they are congruent mod $d$, we know that there is no valid choice for $(y_1, y_2)$ if $\frac{L}{x_1} < 1$. Otherwise if $\frac{L}{x_1} \geq 1$, then the number of choices is at most 
    \[\frac{1}{2}\left\lceil\frac{L}{x_1}\right\rceil\cdot \left(\left\lceil\frac{L}{x_1}\right\rceil - 1\right) \leq \frac{L^2}{x_1^2}.\]
    Therefore, for each $x_1$, the number of choices for $(x_2; y_1, y_2)$ is at most $\frac{L^3}{ax_1}$. 
    
    Let $X_1$ be the set of such tuples with $x_1 \leq \frac{ea^2}{L^2}$. We first estimate $|X_1|$. As we sum $\frac{L^3}{ax_1}$ over $1\leq x_1 \leq \frac{ea^2}{L^2}$, we conclude that $|X_1| \leq \frac{L^3}{a}\left(\log \frac{ea^2}{L^2}+1\right) \leq L^2\left(2\frac{L}{a} + 2\frac{L}{a}\log\frac{a}{L}\right) \leq 4L^2.$
    
    It remains to estimate the size of $X_2 = X\setminus X_1$. It consists of tuples with $x_1 > \frac{ea^2}{L^2}$. Note that $x_1^2 \leq x_1y_1 < a+dL$, we know that $\frac{ea^2}{L^2} < x_1 < \sqrt{a+dL}$.
    For each fixed $x_1 = x\in (\frac{ea^2}{L^2}, \sqrt{a+dL})$, we consider tuples $(x_1, x_2; y_1, y_2)\in X_2$. As argued above, we know that $x_2$ is an element in $(x_1, x_1dL/a)$ with $x_2\equiv x_1\pmod d$, and $y_1, y_2$ are elements in $[\frac{a}{x_1}, \frac{a}{x_1}+\frac{dL}{x_1})$ with $y_1\equiv y_2 \equiv ax_1^{-1}\pmod d$.
    
    Next we use the fact that all elements in $\A'$ having at most $T$ distinct prime factors and are square-free, which implies that $\omega(x_i)+\omega(y_j) = \omega(x_iy_j) \leq T$ for $i, j\in \{1, 2\}$. Therefore for each $(x_1, x_2; y_1, y_2)\in X_2$, we have
    \[ \left(\frac{1}{\sqrt{2}}\right)^{\omega(x_1y_1)+\omega(x_1y_2) + \omega(x_2y_1) + \omega(x_2y_2) - 4T}  \geq 1.\]
    Using the above bound and that $\omega(x_iy_j) = \omega(x_i)+\omega(y_j)$ since $x_iy_j\in \mathcal{A}'$ is square-free, we have
    \begin{equation}\label{eqn:thm-reduced-as-product}
    \begin{split}
        |X_2| & \leq \sum_{(x_1, x_2; y_1, y_2)\in X_2}
    2^{-\omega(x_1) -\omega(x_2)- \omega(y_1)-\omega(y_2)+2T } \\
        & \leq 2^{2T}\sum_{\frac{ea^2}{L^2} < x_1 < \sqrt{a+dL}}2^{-\omega(x_1)}\sum_{\substack{x_1 < x_2 < x_1\frac{a+dL}{a}\\ x_2\equiv x_1\Mod d}}2^{-\omega(x_2)}\left(\sum_{\substack{\frac{a}{x_1}\leq y < \frac{a+dL}{x_1}\\ y\equiv ax_1^{-1}\Mod d}}2^{-\omega(y)}\right)^2.
    \end{split}
    \end{equation}
    We now apply Theorem~\ref{lem:Shiu} to two factors on the right hand side of \eqref{eqn:thm-reduced-as-product}. Note that for $x' = x_1\frac{a+dL}{a}$ and $y' = x_1dL/a$, we have $y' > \frac{ea^2}{L^2}\frac{dL}{a} = \frac{ea}{L}d \geq ed^2 > d^2$ (where we use the assumption that $a \geq dL$) and $x' > y'$. Finally we have $(y')^2 = x_1^2d^2L^2/a^2 > ex_1d^2 \geq ex_1 > x'$. Therefore parameters $(x, y, d, f) = (x', y', k, 2^{-\omega(\cdot)})$ satisfy the conditions of Theorem~\ref{lem:Shiu} with $\alpha = \beta = 1/2$ and $A_1 = A_2 = 1$. By Theorem~\ref{thm:merten}, we have $\sum_{p\leq x', p\nmid d}f(p)/p \leq \frac{\LL x'+C}{2}$ for $f(x) = 2^{-\omega(x)}$, and further
    \begin{equation}\label{eqn: omega }
    \sum_{\substack{x_1 < x_2 < x_1\frac{a+dL}{a}\\ x_2\equiv x_1\Mod d}}2^{-\omega(x_2)} \ll \frac{x_1\frac{dL}{a}}{\phi(d)}\left(\log\frac{x_1(a+dL)}{a}\right)^{-\frac{1}{2}} \leq \frac{x_1dL}{a\phi(d)}(\log x_1)^{-\frac{1}{2}}.
    \end{equation}
    Similarly as $x_1 < \sqrt{a+dL} \leq \sqrt{2L\log L} < L^{\frac23}$ for $L$ sufficiently large, one can verify that for $(x', y') = (\frac{a+dL}{x_1}, \frac{dL}{x_1})$, we have $(y')^2 = d^2\frac{L^2}{x_1^2} > d^2L^{\frac23} > d^4$, $x' > y'$, and $\frac{(y')^2}{x'} = \frac{d^2L^2}{x_1(a+dL)} > \frac{d^{2}L^2}{(a+dL)^{3/2}} > 1$. Hence by Theorem~\ref{lem:Shiu} with the same $(\alpha,\beta,A_1,A_2)$ and by the same estimate due to Theorem~\ref{thm:merten}, we have
    \begin{equation}\label{eqn: omega y}
        \sum_{\substack{\frac{a}{x_1}\leq y < \frac{a+dL}{x_1}\\ y\equiv ax_1^{-1}\Mod d}}2^{-\omega(y)} \ll \frac{\frac{dL}{x_1}}{\phi(d)}\left(\log\frac{a+dL}{x_1}\right)^{-\frac12} \leq \sqrt{2}\frac{dL}{x_1\phi(d)}(\log(a+dL))^{-\frac{1}{2}}.
    \end{equation}
    Here in the second inequality we use $\frac{a+dL}{x_1} \geq \sqrt{a+dL}$. Put \eqref{eqn: omega } and \eqref{eqn: omega y} back to \eqref{eqn:thm-reduced-as-product}. We get
    \begin{equation}\label{eqn:thm-reduced-sum}
    \begin{split}
        |X_2|&\ll 2^{2T}\sum_{\frac{ea^2}{L^2} < x_1 < \sqrt{a+dL}}2^{-\omega(x_1)}\cdot \frac{d^3L^3}{x_1a\phi(d)^3}(\log x_1)^{-\frac12}(\log(a+dL))^{-1}\\
        & = \frac{d^3L^3}{a\phi(d)^3}2^{2T}(\log(a+dL))^{-1}\sum_{\frac{ea^2}{L^2} < x_1 < \sqrt{a+dL}}2^{-\omega(x_1)}\frac{(\log x_1)^{-\frac12}}{x_1}.
    \end{split}
    \end{equation}
    Here the implicit constant is absolute. To estimate the second line of \eqref{eqn:thm-reduced-sum}, we partition the interval dyadically over $x_1\in (e^k, e^{k+1})$ for $1\le \lfloor\log\frac{ea^2}{L^2}\rfloor \leq k \leq \lfloor\log\sqrt{a+dL}\rfloor$.  In each interval we have
    \[\sum_{e^k < x_1 < e^{k+1}}2^{-\omega(x_1)}\frac{(\log x_1)^{-\frac12}}{x_1} \leq \sum_{e^k < x_1 < e^{k+1}}2^{-\omega(x_1)}\frac{k^{-\frac12}}{e^k} \ll e^k(k+1)^{-\frac12}\frac{k^{-\frac12}}{e^k} \ll k^{-1},\]
    where in the second inequality we use Theorem~\ref{lem:Shiu} with $(x, y, d, a) = (e^{k+1}, e^{k+1}-e^k, 1, 1)$ and parameters $\alpha=\beta=1/2$ and $A_1=A_2 = 1$. Summing over all values of $k$, we have for large enough $L$,
    \begin{equation}\label{eqn: x_1}
      \sum_{\frac{ea^2}{L^2} < x_1 < \sqrt{a+dL}}2^{-\omega(x_1)}\frac{(\log x_1)^{-\frac12}}{x_1} \ll \sum_{k=\lfloor\log\frac{ea^2}{L^2}\rfloor}^{\lfloor\log\sqrt{a+dL}\rfloor}k^{-1} \leq 2 \log\log(a+dL) . 
    \end{equation}
Noting that $T = \log\log(a+dL) + (\log\log(a+dL))^{\frac23}$ and by choosing $L_0$ sufficiently large, we have 
\begin{equation}\label{eqn: 2T}
    2^{2T} = (\log (a+dL))^{(2\log 2)\left(1+(\log\log(a+dL))^{-\frac13}\right)} < (\log (a+dL))^{2\log 2 + \epsilon/3}.
\end{equation}
    Put \eqref{eqn: x_1} and \eqref{eqn: 2T} back to \eqref{eqn:thm-reduced-sum}. Noting that $\log\log(a+dL) < (\log(a+dL))^{\epsilon/3}$, $\frac{d}{\phi(d)} = O(\log\log d) \leq (\log(a+dL))^{\epsilon/9}$, and $a+dL \leq 2L\log L \leq L^2$ as $L\geq L_0$ is sufficiently large, we conclude that
    \[|X_2| \ll \frac{L^3}{a}\left(\frac{d}{\phi(d)}\right)^3(\log(a+dL))^{2\log2-1 + \epsilon/3}(\L(a+dL))^{\epsilon/3} \ll \frac{L^3}{a}(\log L)^{2\log2-1 + \epsilon}.\]
Here the implicit constant factor is absolute. It follows that
    \[E_\times(\A') \leq 2|\A'|^2 + 8(|X_1|+|X_2|) \ll L^2\left(1+\frac{L}{a}(\log L)^{2\log2-1 + \epsilon}\right).\]
    Recall that $|\A'| \geq |\A|/2$. We have the desired subset $\A'$.
\end{proof}

\section{Elements with a fixed number of primes factors in arithmetic progressions}\label{sec: Smirnov}

In this section, we make some preparations for the proof of Theorem~\ref{thm: A energy 86} in Section~\ref{Sec: main thm}. Let $\A$ be an arithmetic progression satisfying $0 < dL \leq a \leq L\sqrt{\log L}$. For any positive integer $n$, we denote the sequence of distinct prime factors of $n$ by $p_1(n) < p_2(n) < \cdots < p_{\omega(n)}(n)$. For parameters $\alpha, \beta\in \R$ and $k\in \N$, we define
\begin{equation}\label{eqn: N}
    \CN_k(\A; \alpha, \beta) := \{n \in \A: n\textrm{  square-free},\; \omega(n) = k,\; \LL p_j(n) \geq \alpha j-\beta\;~~ \forall\;1\leq j\leq k\}
\end{equation}
and let $N_k(\A; \alpha, \beta) = |\CN_k(\A; \alpha, \beta)|$. Our goal is to give a lower bound on $N_k(\A; \alpha, \beta)$. This is a natural generalization of \cite{Erd60,Fordstats07, Ten84} where the object studied is the special arithmetic progression $\A = [N]$. Our proof uses ideas from \cite{Erd60, Ten84} and in Appendix~\ref{sec:Smirnov-stronger} we give a stronger estimate following \cite{FordExtremal18}.

First we estimate the number of primes in an arithmetic progression using the following theorem of Siegel and Walfisz. For a finite $\A\subseteq \N$, let $\pi(\A)$ be the number of primes in $\A$.
\begin{theorem}[Siegel-Walfisz, see Section II.8. in \cite{Tenenbaum}]\label{thm:SW}
Let $\pi(x; q, a)$ be the number of primes up to $x$ with $p\equiv a \pmod q$. Then for any $A>0$, 
uniformly for $(a, q) = 1$ and $1\leq q \leq (\L x)^A$, we have
\begin{equation}\label{eqn: pi}
    \pi(x; q, a) = \frac{x}{\phi(q) \log x} + O\left(\frac{x}{\phi(q)(\L x)^2}\right).
\end{equation}
\end{theorem}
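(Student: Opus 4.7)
The plan is to follow the classical route through Dirichlet $L$-functions. First, I would expand the indicator of the residue class via orthogonality of the Dirichlet characters modulo $q$:
\[
\mathbf{1}_{n\equiv a \Mod{q}} = \frac{1}{\phi(q)}\sum_{\chi \Mod{q}} \overline{\chi}(a)\chi(n),
\]
so that
\[
\pi(x;q,a) = \frac{1}{\phi(q)}\sum_{\chi \Mod{q}} \overline{\chi}(a)\sum_{p\leq x}\chi(p).
\]
The principal character $\chi_0$ contributes, up to an $O(\omega(q))$ correction from primes dividing $q$, essentially $\pi(x)/\phi(q)$, and the Prime Number Theorem with its classical error term $\pi(x) = x/\L x + O(x/(\L x)^2)$ then supplies exactly the main term $x/(\phi(q)\L x)$ together with an error well inside the stated bound.

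For each non-principal $\chi$ I would pass to the Chebyshev weighted sum $\psi(x,\chi) = \sum_{n\le x}\Lambda(n)\chi(n)$ via partial summation, since this is the object directly controlled by the zeros of $L(s,\chi)$. Perron's formula followed by a contour shift to the left of the line $\operatorname{Re} s = 1$ yields an explicit formula for $\psi(x,\chi)$ in terms of the non-trivial zeros of $L(s,\chi)$. The classical zero-free region $\operatorname{Re} s \ge 1 - c/\L(q(|t|+2))$, valid for every non-principal $\chi$ with the possible exception of one real character admitting a Siegel zero, produces the bound $\psi(x,\chi) \ll x\exp(-c'\sqrt{\L x})$ uniformly in $q \le \exp(c''\sqrt{\L x})$. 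Summing over the $\phi(q)-1$ non-principal characters and translating from $\psi$ back to $\pi$ by partial summation absorbs these contributions comfortably into $O(x/(\phi(q)(\L x)^2))$.

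The main obstacle is the possible existence of a Siegel zero of a real character modulo $q$, where the general zero-free region above fails. To handle it I would invoke Siegel's theorem: for every $\epsilon>0$ there exists an (ineffective) $C(\epsilon)>0$ such that every real zero $\beta$ of any $L(s,\chi)$ with $\chi$ a real character of conductor at most $q$ satisfies $\beta \le 1 - C(\epsilon)\, q^{-\epsilon}$. Applying this with, say, $\epsilon = 1/(2A)$ and combining with the explicit formula forces the contribution of any exceptional zero to be $O\!\left(x\exp(-c''\sqrt{\L x})\right)$ as long as $q \le (\L x)^A$, which is swallowed by the target error $O(x/(\phi(q)(\L x)^2))$. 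The price is ineffectivity in $A$ of the implicit constant, inherited from Siegel's theorem and standard for this statement. A full treatment along exactly these lines is given in Section II.8 of \cite{Tenenbaum}.
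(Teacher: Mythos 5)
The paper does not prove this statement: it is quoted verbatim as a classical theorem, with Tenenbaum's book cited as the source. There is therefore nothing in the paper to compare your argument against. That said, your sketch is the standard textbook route (orthogonality of characters, explicit formula for $\psi(x,\chi)$ via a contour shift, the Vinogradov--Korobov-free classical zero-free region for non-principal characters, and Siegel's ineffective bound to kill the possible exceptional real zero), and every step you describe is sound. The one place you are cutting it close is the principal-character term: with the PNT error $\pi(x)=x/\log x + O(x/(\log x)^2)$ the resulting contribution is exactly of the size $x/(\phi(q)(\log x)^2)$, not strictly smaller; if you want it comfortably inside the stated bound, use the stronger de la Vall\'ee Poussin form $\pi(x)=\mathrm{Li}(x)+O(x\exp(-c\sqrt{\log x}))$ and expand $\mathrm{Li}(x)$ to one term. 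Either way the conclusion holds, and you correctly flag the ineffectivity in $A$ inherited from Siegel's theorem.
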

\begin{lemma}\label{lem:primes-in-AP}
If $\A = \{a+id:0\leq k < L\}$ satisfies that $L$ is sufficiently large, and $0 < dL < a < 10L\sqrt{\log L}$ and $\gcd(a, d) = 1$, then $\pi(\A) \geq \frac{dL}{2\phi(d)\log L}$.
\end{lemma}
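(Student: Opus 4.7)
The plan is to write $\pi(\A)$ as a difference of two counting functions and apply the Siegel--Walfisz theorem to each. Since every element of $\A$ lies in $[a,a+dL)$ and is congruent to $a$ modulo $d$, with $\gcd(a,d)=1$, we have
\[
\pi(\A) \;=\; \pi(a+dL;\,d,a) \;-\; \pi(a;\,d,a).
\]

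First I would verify that Theorem~\ref{thm:SW} applies at both endpoints. The hypothesis $dL < a < 10L\sqrt{\log L}$ forces $d < 10\sqrt{\log L}$, and with $x\in\{a,a+dL\}$ we have $\log x \gg \log L$, so $d \leq (\log x)^A$ holds for any fixed $A$ once $L$ is sufficiently large. Applying Siegel--Walfisz at both endpoints gives
\[
\pi(\A) \;=\; \frac{1}{\phi(d)}\!\left(\frac{a+dL}{\log(a+dL)}-\frac{a}{\log a}\right) + O\!\left(\frac{a+dL}{\phi(d)(\log L)^2}\right).
\]

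Next I would extract the main term. By the mean value theorem applied to $f(x)=x/\log x$, whose derivative is $\bigl(1-1/\log x\bigr)/\log x$, there exists $\xi\in(a,a+dL)$ with
\[
\frac{a+dL}{\log(a+dL)}-\frac{a}{\log a} \;=\; \frac{dL}{\log \xi}\Bigl(1-\tfrac{1}{\log\xi}\Bigr)\;\geq\; \frac{dL}{\log(a+dL)}\bigl(1-o(1)\bigr).
\]
Since $a+dL < 20L\sqrt{\log L}$, we have $\log(a+dL) \leq (1+o(1))\log L$, so the main term is at least $\bigl(1-o(1)\bigr)\dfrac{dL}{\phi(d)\log L}$.

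Finally I would compare the error to the main term. The Siegel--Walfisz error is
\[
O\!\left(\frac{a+dL}{\phi(d)(\log L)^2}\right) \;=\; O\!\left(\frac{L}{\phi(d)(\log L)^{3/2}}\right),
\]
which is smaller than the main term by a factor of $O\bigl(1/(d\sqrt{\log L})\bigr)=o(1)$. Combining the lower bound on the main term with the $o(1)$ error gives $\pi(\A) \geq (1-o(1))\frac{dL}{\phi(d)\log L} \geq \frac{dL}{2\phi(d)\log L}$ once $L$ is sufficiently large.

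The only mild obstacle is keeping track of the two logarithms $\log a$ and $\log(a+dL)$ so that their difference does not erode the main term; the upper bound $a<10L\sqrt{\log L}$ is exactly what ensures $\log(a+dL)=(1+o(1))\log L$, making both the main-term estimate and the error-term comparison go through.
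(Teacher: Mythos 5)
Your proposal is correct and follows essentially the same path as the paper: express $\pi(\A)$ as a difference of prime-counting functions in arithmetic progressions, invoke Siegel--Walfisz (using $d < 10\sqrt{\log L}$ and $\log x \asymp \log L$ to justify the hypothesis), and then show the error term is a factor $O(1/\sqrt{\log L})$ smaller than the main term. The only cosmetic difference is that you extract the main term via the mean value theorem applied to $x/\log x$, whereas the paper directly shifts $\log(a+dL-1)$ to $\log(a-1)$ at the cost of an absorbable error; both yield the same bound. (A minor nit: $\pi(a+dL;d,a)-\pi(a;d,a)$ counts primes in $(a,a+dL]$, which can differ from $\pi(\A)$ by $\pm 1$ at the endpoints; the paper's $\pi(a+dL-1;d,a)-\pi(a-1;d,a)$ is exact, but either way the discrepancy is $O(1)$ and irrelevant to the conclusion.)
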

\begin{proof}
First note that we can write $\pi(\A) = \pi(a+dL-1; d, a) - \pi(a-1; d, a)$. From our assumption we know that $d < 10\sqrt{\log L}$ and $a+dL > a > L$, so we may take $A = 1$ in Theorem~\ref{thm:SW} and $L > e^{100}$ such that $1\leq d < \log (a-1) < \log (a+dL-1)$. Hence we apply \eqref{eqn: pi} to have
\[  \pi(\A) =  \frac{a+dL-1}{\phi(d)\log (a+dL-1)} - \frac{a-1}{\phi(d)\log (a-1)} +   O\left(\frac{a-1}{\phi(d)(\log (a-1))^2}+ \frac{a+dL-1}{\phi(d)(\log (a+dL-1))^2} \right). \] 
To estimate the main term, noticing that  $a-1\leq a+dL-1 \leq 2a-2$, we have
\[\frac{a+dL-1}{\phi(d)\L (a+dL-1)} = \frac{a+dL-1}{\phi(d)\L (a-1)} + O\left(\frac{a+dL-1}{\phi(d)(\L (a-1))^2}\right).\]
This shows that
\[\pi(\A) = \frac{dL}{\phi(d)\L (a-1)} + O\left(\frac{a-1}{\phi(q)(\log (a-1))^2} + \frac{a+dL-1}{\phi(d)(\log (a+dL-1))^2} + \frac{a+dL-1}{\phi(d)(\L (a-1))^2}\right).\]
Clearly $\frac{a+dL-1}{\phi(d)(\L (a-1))^2}$ is the largest among the three summands in the error term. As $L \leq dL < a \leq 10L\sqrt{\log L}$, we can bound the error terms by
\[\frac{a+dL-1}{\phi(d)(\L (a-1))^2} \leq \frac{20dL\sqrt{\L L}}{\phi(d)(\log L)^2} = O\left(\frac{dL}{\phi(d)(\L L)^{\frac 32}}\right).\]
As $L$ is sufficiently large, we may assume that the error term is at most $\frac{1}{6}\frac{dL}{\phi(d)\L L}$. Also note that $a-1\leq 10L\sqrt{\log L} \leq L^{\frac32}$ when $L $ is sufficiently large. This gives the desired estimate
\[\pi(\A)\geq \frac{2}{3}\frac{dL}{\phi(d)\log L} - \frac{1}{6}\frac{dL}{\phi(d)\L L} = \frac{dL}{2\phi(d)\L L }.\]
\end{proof}
Using this, we can estimate $N_k(\A;\alpha, \beta)$ with a specific choice of the parameters motivated by \cite{Erd60}.

\begin{proposition}\label{Prop: N}
 If $\A = \{a+id:0\leq i < L\}$ is an arithmetic progression satisfying that $L$ is sufficiently large, $0 < dL \leq a \leq L\sqrt{\log L}$, and $\gcd(a, d) = 1$, then for $k = \left\lfloor\frac{\LL L}{\L 4} -5\sqrt{\L\L L}\right\rfloor -4 $,
\[N_k(\A; \log 4, 1) 
\gg {L}{(\log L)^{-\theta -o(1)} }.
\] 
\end{proposition}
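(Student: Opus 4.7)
My plan is to follow the classical Erd\H{o}s construction: iterate over the smaller prime factors of $n \in \CN_k(\A; \log 4, 1)$ and apply Siegel--Walfisz (Lemma~\ref{lem:primes-in-AP}) to count the largest one. Each such $n$ factors uniquely as $n = p_1\cdots p_k$ with $p_1 < \cdots < p_k$ prime, $p_j \geq Q_j := \exp(e^{(\log 4)j - 1})$, and $n \in \A$.

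First I would fix an admissible ``small-prime'' tuple $(p_1,\ldots,p_{k-1})$ with $p_j \geq Q_j$, set $P := p_1\cdots p_{k-1}$, and count primes $p_k$ with $p_k P \in \A$. Since $\gcd(a,d)=1$ forces every element of $\A$ (and hence every $p_j$) to be coprime to $d$, the set $\{m : mP \in \A\}$ is a sub-arithmetic progression of common difference $d$, length $\asymp L/P$, starting value $\asymp a/P$, lying in a residue class coprime to $d$. A check verifies the hypotheses of Lemma~\ref{lem:primes-in-AP} provided $P \leq L^{1-c}$ for some fixed $c>0$; this is automatic because every $p_j$ in the ranges considered satisfies $\log p_j \ll 4^k/e = \log L \cdot 4^{-5\sqrt{\log\log L}}$, forcing $P = L^{o(1)}$. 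The lemma thus produces $\gg dL/(P\phi(d)\log L)$ valid $p_k$, and the order $p_k > p_{k-1}$ is automatic since $p_{k-1} \leq L^{o(1)} \ll a/P$. Summing:
\[N_k(\A; \log 4, 1) \;\gg\; \frac{dL}{\phi(d)\log L}\sum_{(p_1,\ldots,p_{k-1})} \frac{1}{P},\]
over ordered tuples $p_1<\cdots<p_{k-1}$ with $p_j \geq Q_j$.

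Next I would lower-bound $\sum 1/P$ via a volume estimate. Put $t_j := \log\log p_j$; by Mertens' theorem (Theorem~\ref{thm:merten}) the measure $dp/p$ corresponds to unit Lebesgue measure in $t$, so up to negligible errors $\sum 1/P \gg \vol(R)$, where
\[R := \bigl\{(t_1,\ldots,t_{k-1}) : t_j \geq (\log 4)j - 1,\; t_1 < \cdots < t_{k-1} \leq T \bigr\}\]
and $T := \log\log L + O(1)$ (the upper bound comes from $P \leq L^{1-c}$, which in $t$-space amounts to $t_{k-1} \leq T+O(1)$). The key point is that the slack $B := T - (\log 4)(k-1) = \Theta(\sqrt{\log\log L})$, arising from the $-5\sqrt{\log\log L}$ shift in $k$, is exactly enough so that a ballot/lattice-path-type argument gives $\vol(R) \gg (T^{k-1}/(k-1)!)\cdot (B/T)$. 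Using $B/T = (\log L)^{-o(1)}$ and Stirling at $k-1 = \log\log L/\log 4 - 5\sqrt{\log\log L} - O(1)$,
\[\frac{(\log\log L)^{k-1}}{(k-1)!} \sim \frac{(e\log\log L/(k-1))^{k-1}}{\sqrt{2\pi(k-1)}} = (e\log 4)^{k-1}\cdot (\log L)^{-o(1)} = (\log L)^{1-\theta-o(1)},\]
via the identity $(1+\log\log 4)/\log 4 = 1-\theta$. Combining everything gives $N_k(\A; \log 4, 1) \gg (L/\log L)\cdot (\log L)^{1-\theta-o(1)} = L(\log L)^{-\theta-o(1)}$.

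The main obstacle I anticipate is the volume lower bound for $R$. The naive approach---placing each $p_j$ in the dyadic interval $[Q_j, Q_{j+1}]$, where ordering is automatic---yields only $\sum 1/P \gg (\log 4)^{k-1}$, short of the target by a factor $(\log L)^{1/\log 4}$. One must instead allow each $t_j$ to range essentially up to $T$ and handle the ordering combinatorially in the spirit of \cite{Erd60, Ten84}; the $5\sqrt{\log\log L}$ safety margin in the choice of $k$ is exactly calibrated so that a $(\log L)^{-o(1)}$ fraction of the full simplex volume $T^{k-1}/(k-1)!$ survives the staircase lower bounds $t_j \geq (\log 4)j - 1$. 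This ballot-probability estimate, together with the Mertens-to-integral transfer, is the technical heart of the proposition.
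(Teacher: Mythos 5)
Your first half (fix the small primes, let $q=p_1\cdots p_{k-1}$, count admissible $p_k$ via Siegel--Walfisz applied to the derived progression $\A_q$) matches the paper's proof exactly, including the observation that $q<\sqrt{a}$ makes the ordering $p_k>p_{k-1}$ and the staircase bound $\LL p_k \geq k\log 4 - 1$ automatic. The gaps are all in the second half, which you yourself flag as ``the technical heart'': you assert a ballot-type volume lower bound without proving it, and---more seriously---the translation of the product constraint into $t$-space is wrong. The condition $P=p_1\cdots p_{k-1}<\sqrt{a}$ reads $\sum_{j}e^{t_j}<\tfrac12\log a$ in $t$-space; it is \emph{not} equivalent to $t_{k-1}\le T+O(1)$ with $T=\LL L+O(1)$, since $k-1\asymp\LL L$ summands each of order $e^T\asymp\log L$ can make the sum far exceed $\log L$. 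Your claim ``$\log p_j\ll 4^k/e=\log L\cdot 4^{-5\sqrt{\LL L}}$'' is also unjustified: the defining condition $\LL p_j\geq j\log4-1$ is a \emph{lower} bound on $p_j$, so there is no a priori upper bound to invoke, and the ``automatic'' claim $P=L^{o(1)}$ does not follow.

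The paper sidesteps both issues by a concrete construction rather than a ballot estimate: Lemma~\ref{lem:sum-of-reciprocal} partitions the $t$-axis into $J\asymp\sqrt{\LL x}$ blocks of width $\sqrt{\LL x}$, places exactly $h=\lfloor\sqrt{\LL x}/\log4\rfloor$ primes in each, and then verifies $p_1\cdots p_k<x$ by the explicit inequality $\sum_j he^{(j+1)\sqrt{\LL x}}+(k-Jh)e^{(J+2)\sqrt{\LL x}}<e^{(J+4)\sqrt{\LL x}}<\log x$ (possible precisely because $J+4<\sqrt{\LL x}$, which is what the $-5\sqrt{\LL L}$ shift in $k$ buys). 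The staircase condition on $\LL p_j$ is then automatic block-by-block, and each block contributes $\approx(\sqrt{\LL x})^h/h!\geq(e\log4)^h(\LL x)^{-O(1)}$, giving $(e\log4)^{k}(\log x)^{-o(1)}$ overall. A genuine Smirnov/ballot volume argument \emph{is} used in the paper, but only in Appendix~\ref{sec:Smirnov-stronger} (Proposition~\ref{A Prop: N} and Lemma~\ref{A lem:sum-of-reciprocal}) to get the finer $(\LL L)^{-3/2}(\LLL L)^{-O(1)}$ factor with $k=\lfloor\LL L/\log 4\rfloor$---and even there the product constraint is handled by the same block device, not by a cutoff on $t_{k-1}$. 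So your Stirling arithmetic $(e\log4)^{k-1}=(\log L)^{1-\theta-o(1)}$ is right, but to make the proof go through you would need to either (i) impose a stronger cutoff such as $t_{k-1}\le\LL L-c\sqrt{\LL L}$ \emph{and} prove the resulting constrained volume bound, or (ii) switch to the block construction of Lemma~\ref{lem:sum-of-reciprocal}, which is self-contained and elementary.
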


\begin{proof}
 To obtain a lower bound, we count elements $n\in \CN_k(\A; \log 4, 1)$ with $p_1(n)p_2(n)\cdots p_{k-1}(n) < \sqrt{a}$.
 Once we fixed such a choice of $(p_1, p_2, \dots, p_{k-1})$ with $\LL p_j \geq j\log 4 - 1$ for all $1\le j \le k-1$, we count the number of $p_k$ with $p_k > p_{k-1}$ and $p_1p_2\cdots p_k\in \A$. Let $q = p_1p_2\cdots p_{k-1}$. Clearly if $qp\in \A$ for some prime $p$, then $p \geq \frac{a}{q} > \sqrt{a} > q > p_{k-1}$. Moreover we have \[\LL p > \LL \sqrt{L} = \LL L - \L 2 >  k \log 4 - 1,\] 
so any such choice of prime $p$ would give a valid choice for $p_k$ as defined in \eqref{eqn: N}. 

For any such fixed choice of $q$, if $\gcd(q, d) > 1$, then any element in $\A$ is not a multiple of $q$, so there is no such choice of $p_k$. 
If $\gcd(q, d) = 1$, then $q|a+id$ is equivalent to $i\equiv -ad^{-1}\pmod q$ and there is an unique such $i = i_0$ with $0\leq i_0 < q$. Then the multiples of $q$ in $\A$ are of the form $$\left\{a+(i_0+k'q)d:0\leq k' < \frac{L-i_0}{q}\right\} \supseteq \{a+(i_0+k'q)d:0\leq k' < \lfloor L/q\rfloor\},$$ and the inclusion follows from $\frac{L-i_0}{q} > \lfloor L/q\rfloor - 1$.
Hence any element $p\in \A_q := \{a'+k'd:0\leq k' < L'\}$ with $a' = \frac{a+i_0d}{q}$ and $L' = \lfloor L/q\rfloor$ would satisfy $pq\in \A$. It is sufficient to estimate the number of primes from $\A_q$, which itself is an arithmetic progression. As $\gcd(a', d) \leq \gcd(a+i_0d, d) = \gcd(a, d) = 1$, we have $\gcd(a', d) = 1$. Moreover, we know that $dL' \leq \frac{dL}{q} < \frac{a}{q} \leq a'$ and $a' < 10L'\sqrt{\log L'}$ when $L$ is sufficiently large. Note that $L' \geq \frac{L}{q} - 1 > \frac{L^{\frac12}}{(\L L)^{\frac14}} - 1$ is also sufficiently large. By invoking Lemma~\ref{lem:primes-in-AP}, the number of choices for $p_k$ is at least
\begin{equation}\label{eqn:count-last-prime}
    \pi(\A_q) \geq \frac{dL'}{2\phi(d)\L L'} \geq \frac{dL/q}{4\phi(d)\L L}.
\end{equation}
Summing \eqref{eqn:count-last-prime} up over all possible $(p_1, \dots, p_{k-1})$,
we have 
\begin{equation}\label{eqn:prop-N-target}
    N_k(\A; \log 4, 1) \geq \frac{dL}{4\phi(d)\log L}\cdot \sum_{\substack{p_1 < \cdots < p_{k-1}, p_1\cdots p_{k-1} < \sqrt{a}\\ p_j\nmid d,\LL p_j \geq j\log 4 - 1\;\forall\;1\leq j\leq k-1}}\frac{1}{p_1\cdots p_{k-1}}.
\end{equation}
Since $L \le a \le L^{2}$, the condition in Lemma~\ref{lem:sum-of-reciprocal} is satisfied with our choice of $k$, 
and it implies that
\[\begin{split}
N_k(\A; \log 4, 1) & \geq \frac{dL}{4\phi(d)\log L}(e\log 4)^{k-1}(\log \sqrt{a})^{-o(1)} \geq \frac{L}{(\log L)^{\theta+o(1)}} ,  
\end{split}
\]
where we use that $\frac{d}{\phi(d)}\geq 1$, $k = (1+o(1))\frac{\LL L}{\log 4}$, and the constant $\frac{1}{4}$ is absorbed in the $o(1)$ term.
\end{proof}

\begin{lemma}\label{lem:sum-of-reciprocal}
For any $x$ large enough, $k \le  \frac{\LL x}{\log 4} -5\sqrt{\log \log x}$, $d = O(\L x)$, we have
\begin{equation}\label{eqn:sum-of-reciprocal}
    \sum_{\substack{p_1 < \cdots < p_{k}, p_1\cdots p_{k} < x\\ p_j\nmid d,\LL p_j \geq j\log 4\;\forall\;1\leq j\leq k}}\frac{1}{p_1\cdots p_{k}} 
\ge (e\log 4)^k(\log x)^{- o(1)}.
\end{equation}
\end{lemma}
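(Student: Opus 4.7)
The plan is to reduce the desired lower bound to an explicit volume computation via Mertens' theorem. First, I would restrict the sum to tuples with $p_j < x^{1/k}$ for every $j$, which (since the $p_j$ are distinct primes) forces $p_1\cdots p_k < p_k^k \leq x$. Substituting $q_j := \LL p_j$ and $S := \LL x - \log k$, the constraints become $j\log 4 \le q_j \le S$ together with $q_1 < q_2 < \cdots < q_k$, while the weight $\prod_j 1/p_j$ is unchanged.

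The first key step is to replace the restricted sum by the volume of the corresponding region in $\R^k$. By Mertens' theorem (Theorem~\ref{thm:merten}), the measure $\sum_p (1/p)\delta_p$ pushes forward, under $p\mapsto \LL p$, to Lebesgue measure on $[1,\infty)$ up to an error of total mass $O(1)$. The $p\nmid d$ restriction further removes at most $\sum_{p\mid d}1/p = O(\LLL x)$ mass in total, since $d = O(\log x)$ has only $O(\LL x/\LLL x)$ prime factors, each at most $d$. Because this loss is small compared to the $\Theta(\LL x)$ mass per coordinate, applying Mertens coordinate-by-coordinate gives
\[
\sum_{\substack{p_1 < \cdots < p_k\\ \LL p_j \ge j\log 4,\ p_j < x^{1/k}\\ p_j\nmid d}} \prod_j \frac{1}{p_j} \;\geq\; \bigl(1 - O(\LLL x/\LL x)\bigr)^k\cdot V_k(S)\cdot (1-o(1)),
\]
where $V_k(S) := \operatorname{vol}\{(q_1,\ldots,q_k) : q_1 < \cdots < q_k \le S,\ q_j \ge j\log 4\ \forall j\}$. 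Since $k = O(\LL x)$, the prefactor is $\exp(-O(\LLL x/\log 4)) = (\log x)^{-o(1)}$.

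The second main step is to compute $V_k(S)$ in closed form by induction on $k$. Conditioning on the largest coordinate gives the recurrence $V_k(S) = \int_{k\log 4}^S V_{k-1}(y)\,dy$ with base case $V_1(y) = y - \log 4$. A direct induction, relying on a neat cancellation of the boundary terms at $y = k\log 4$, produces the closed form
\[
V_k(S) = \frac{S^{k-1}(S - k\log 4)}{k!}.
\]

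The last step is a direct Stirling estimate. Under $k \leq \LL x/\log 4 - 5\sqrt{\LL x}$, one has $S - k\log 4 \ge (5\log 4 - o(1))\sqrt{\LL x}$, $S = \LL x\,(1+o(1))$, and $eS/k \ge e\log 4\,(1-o(1))$; Stirling's formula $k! = (k/e)^k\sqrt{2\pi k}\,(1+o(1))$ then gives
\[
V_k(S) \;\geq\; \frac{(eS/k)^k\,(S - k\log 4)}{S\,\sqrt{2\pi k}}\,(1-o(1)) \;\geq\; (e\log 4)^k(\log x)^{-o(1)}.
\]
The main obstacle I anticipate is the first step: since $k = \Theta(\LL x)$ is large, one must track the Mertens remainder and the $p\mid d$ loss carefully so that the accumulated $k$-fold product still lies above $(\log x)^{-o(1)}$. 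This works precisely because the $O(\LLL x)$ total $p\mid d$ mass is negligible compared to the $\Theta(\LL x)$ per-coordinate main mass, so each coordinate contributes a multiplicative factor $1 - O(\LLL x/\LL x)$ and the $k$-fold product is $1/\LL x$ at worst, which is $(\log x)^{-o(1)}$.
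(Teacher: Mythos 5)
Your volume approach is genuinely different from the paper's proof of this lemma (which does not invoke any exact volume formula but instead partitions the primes into $J \approx \sqrt{\LL x}$ blocks $I_j$ each of Mertens mass $\approx \sqrt{\LL x}$, places $h \approx \sqrt{\LL x}/\log 4$ primes in each block, and applies Stirling block-by-block). The closed form you use, $V_k(S) = S^{k-1}(S-k\log 4)/k!$, is correct; I verified the induction and the boundary cancellation at $y = k\log 4$. It is the classical ballot/cycle-lemma identity, and it is essentially the $u=0$ limit of the Smirnov statistic the paper appeals to in Appendix A (Corollary A.4). Your choice to prune to $p_j < x^{1/k}$ so that the product constraint becomes a product of box constraints is clean, and your Stirling step at the end is fine given the $5\sqrt{\LL x}$ slack in $k$.

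The gap is in the first step, and it is a real one: you never actually justify that the weighted prime sum is bounded below by $V_k(S)\cdot(\log x)^{-o(1)}$. Mertens' theorem gives a one-dimensional statement $\sum_{p\le y}1/p = \LL y + M + o(1)$; passing from that to a $k$-dimensional comparison between a sum over ordered prime $k$-tuples and a Lebesgue volume requires a discretization/blocking argument (intervals of unit Mertens mass, counting boxes wholly inside a slightly shrunken region, accounting for off-by-one effects near the boundary hyperplanes $q_j = j\log 4$). You assert a per-coordinate multiplicative factor $1 - O(\LLL x/\LL x)$, but this is only a heuristic: the sum does not factor coordinate-by-coordinate because of the ordering $q_1 < \cdots < q_k$, and the $p\mid d$ mass of size $O(\LLL x)$ is not spread uniformly over the range $[\log 4, S]$ of each coordinate but is concentrated entirely on $\LL p \lesssim \LLL x$ (since every prime factor of $d$ is at most $d = O(\L x)$). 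In the worst case $d$ is the product of all primes up to $\approx \LL x$, and then the first $\Theta(\LLL x)$ of your unit-length blocks lose essentially all their mass; showing that this only costs a bounded (or $(\log x)^{o(1)}$) factor is exactly the delicate point, and it is what the paper handles carefully via the constructions of $\lambda_j$, $\P_j$ and the explicit boundary-slab estimates $V_1(m), V_2(m)$ in the proof of Lemma A.5. So the plan is sound and would in fact yield a sharper error term than the body proof of Lemma 5.5 once the comparison step is made rigorous, but as written that step is missing.
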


\begin{proof}
Let $h = \left\lfloor \frac{\sqrt{\log \log x}}{\log 4}\right\rfloor$, and $J = \floor{k/h} = \sqrt{\log \log x} -  O(1)$. For each $j\geq 1$, we define
\[I_j : = \left[ e^{e^{j \sqrt{\log \log x}
}}, e^{e^{(j+1) \sqrt{\log \log x}
}} \right). \]
By Theorem~\ref{thm:merten}, we know that $\sum_{p\in I_j}1/p = \sqrt{\LL x} + O(1)$. Therefore $\sum_{p\in P_j}1/p = \sqrt{\LL x} - O(\LLL x)$ where $P_j$ is the set of primes in $I_j$ which are not factors of $d$, and the 
$O(\LLL x)$ term bounds the contribution from those primes which are factors of $d$.
In the summation in \eqref{eqn:sum-of-reciprocal}, we consider only the tuples $(p_1, \dots, p_k)$ with $h$ primes in $P_j$ for each $1\leq j\leq J$ and $(k-Jh)$ primes in $P_{J+1}$. Noting that $k-Jh\leq h \leq e^h \leq e^{\sqrt{\LL x}}$, a straightforward computation shows that
\[p_1\cdots p_k \leq \exp\left(\sum_{j=1}^Jhe^{(j+1)\sqrt{\LL x}} + (k-Jh)e^{(J+2)\sqrt{\LL x}}\right) < e^{e^{(J+4)\sqrt{\LL x}}} < x\]
where in the last step we use that $J+4 < \sqrt{\LL x}$ for $x$ large. Hence, any such tuple would satisfy $p_1\cdots p_k < x$. Moreover, we know that $\LL p_j \geq \ceil{j/h}\sqrt{\LL x} > j\log 4$. Therefore any such tuple contributes to a summand on the left hand side of \eqref{eqn:sum-of-reciprocal}. Then the left hand side of \eqref{eqn:sum-of-reciprocal} is at least
\begin{equation}\label{eqn:sum-of-re-1}
    \prod_{j=1}^J\left(\sum_{p_1 < \cdots < p_h \in P_j}\frac{1}{p_1\cdots p_h}\right)\cdot \left(\sum_{p_1 < \cdots < p_{k-Jh}\in P_{J+1}}\frac{1}{p_1\cdots p_{k-Jh}}\right).
\end{equation}
Factors in \eqref{eqn:sum-of-re-1} are of the same form. We estimate each of them as follows. For $1\leq t\leq h$ and $j\geq 1$,
\begin{equation}\label{eqn:sum-of-re-each}
    \sum_{p_1 < \cdots < p_t\in P_j}\frac{1}{p_1\cdots p_t} = \frac{1}{t!}\sum_{p_1\in P_j}\frac{1}{p_1}\sum_{\substack{p_2\in P_j\\p_2\ne p_1}}\frac{1}{p_2}\cdots \sum_{\substack{p_t\in P_j\\p_t\ne p_1, \dots, p_{t-1}}}\frac{1}{p_t} \geq \frac{1}{t!}\left(\sum_{p\in P_j}\frac{1}{p}-\frac{t}{e^{e^{j\sqrt{\LL x}}}}\right)^t.
\end{equation}

Note that $t \leq h \le  \sqrt{\LL x}/\log 4 \leq e^{e^{j\sqrt{\LL x}}}$, so the term inside the parenthesis on the right hand side of \eqref{eqn:sum-of-re-each} is $\sqrt{\LL x} - O(\LLL x)$. We shall apply Stirling's formula and obtain that, when $x$ is sufficiently large, the right hand side of \eqref{eqn:sum-of-re-each} is at least
\[\begin{split}
  \textrm{RHS of }\eqref{eqn:sum-of-re-each} & \geq  \left(\frac{\sqrt{\log \log x} -O(\log \log \log x) - 1 }{t/e}\right)^{t} \cdot (\log \log x)^{-1/4-o(1)} \\
  & \geq  \left (e \log 4-  O\left(  \L\L\L x /\sqrt{\L\L x} \right)\right)^{t} \cdot (\log \log x)^{-1/4-o(1)} \\
  & \geq (e \log 4)^{t} \cdot (\log \log x) ^{-C}
\end{split}  \]
for some absolute constant $C>0$. 
Applying this bound to \eqref{eqn:sum-of-re-each} with $t= h$ and $t = k-Jh$, we have
\[ \begin{split}
\textrm{LHS of }\eqref{eqn:sum-of-reciprocal} \geq \eqref{eqn:sum-of-re-1} & \geq (e \log 4)^{Jh} (\log \log x)^{-CJ} \cdot  (e \log 4)^{k-Jt} (\log \log x)^{-C} \\
& = (e \log 4)^{k} \cdot (\L\L x)^{-O(\sqrt{\log \log x})}
\end{split}
\]
 Thus, noting that the $(\L\L x)^{-O(\sqrt{\log \log x})} = (\log x)^{-o(1)}$, we finish the proof. 
\end{proof}


\section{Proof of Theorem~\ref{thm: A energy 86}}\label{Sec: main thm}
Following the argument in Section \ref{sec:reduction}, we may only focus on the case where our arithmetic progression $\A =\{a+id:0\leq i < L\}$ satisfies $0 < dL < a < L\L L$. Moreover by Theorem~\ref{thm:reduced stronger} with $\epsilon = \frac{3}{2}-2\log2$, we may further assume that $a < \frac{1}{2}L\sqrt{\L L}$. In particular, we have the following reduction.

\begin{proposition}\label{prop:reduced-for-AP}
If $\A = \{a+id: 0\leq i < L\}$ is an arithmetic progression with length $L$ and common difference $d$ with $a > 0$, $L$ sufficiently large, and $L\sqrt{\log L} \leq a+dL$, then there exists a subset $\A'\subseteq \A$ satisfying $|\A'|\gg |\A|$ and $E_{\times}(\A') \ll |\A|^{2} $.
\end{proposition}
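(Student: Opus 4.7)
The plan is to apply Theorem~\ref{thm:reduced stronger} with the specific choice $\epsilon = \tfrac{3}{2} - 2\log 2 > 0$, so that the exponent $2\log 2 - 1 + \epsilon$ collapses to $\tfrac{1}{2}$. Following the opening paragraph of Section~\ref{Sec: main thm}, I may assume the standing reductions $\gcd(a,d) = 1$ and $dL < a$; combined with the hypothesis $a + dL \geq L\sqrt{\log L}$ this yields $a \geq \tfrac{1}{2}L\sqrt{\log L}$.

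First I would dispose of the case $a \geq L\log L$ via Corollary~\ref{cor:large-a}, which gives $E_\times(\A) \ll L^2$ immediately with $\A' = \A$. Henceforth suppose $\tfrac{1}{2}L\sqrt{\log L} \leq a < L\log L$. I would then reduce to square-free elements by applying Lemma~\ref{lem: squarefree} with $\delta = 1$; its hypotheses hold since $a + dL \leq 2a = o(L^2)$ and $d \leq a/L < \log L \leq L$. The lemma supplies a bounded integer $t = O(1)$ and a subset $\B_0 \subseteq \A$ with $|\B_0| \gg L$ such that every element of $\B_0$ is $t^2$ times a square-free integer. Set $\B := \B_0/t^2$; the set $\B$ consists of square-free integers and sits inside the arithmetic progression $\P_5 = \{a_5 + id_5 : 0 \leq i < L_5\}$ with $d_5 = d$, $a_5 \asymp a/t^2$, $L_5 \asymp L/t^2$, and $\gcd(a_5, d_5) = 1$ (inherited from $\gcd(a,d) = 1$ and $\gcd(t,d) = 1$). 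The bounds $a_5 \geq d_5 L_5$, $a_5 < L_5 \log L_5$, and $a_5 \gg L_5 \sqrt{\log L_5}$ propagate from the corresponding bounds on $a$ and $L$, since $t = O(1)$ rescales both $a$ and $L$ proportionally.

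Next I would invoke Theorem~\ref{thm:reduced stronger} on $\B \subseteq \P_5$ with $\delta = \Omega(1)$ and $\epsilon = \tfrac{3}{2} - 2\log 2$, producing $\B' \subseteq \B$ with $|\B'| \geq |\B|/2$ and
\[
E_\times(\B') \ll L_5^2 \left( 1 + \frac{L_5}{a_5}(\log L_5)^{1/2} \right) \ll L_5^2 \ll L^2,
\]
where the middle step uses $a_5 \gg L_5 \sqrt{\log L_5}$. Pulling back through the dilation, the set $\A' := t^2 \cdot \B' \subseteq \B_0 \subseteq \A$ has $|\A'| = |\B'| \gg |\A|$ and $E_\times(\A') = E_\times(\B') \ll |\A|^2$, as required.

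The main obstacle, though a rather clean one, is the precise matching between the exponent in the error term of Theorem~\ref{thm:reduced stronger} and the lower bound on $a$ supplied by the hypothesis: the choice $\epsilon = \tfrac{3}{2} - 2\log 2$ is exactly what is needed for $\tfrac{L}{a}(\log L)^{1/2} = O(1)$ to follow from $a \gg L\sqrt{\log L}$. Everything else is bookkeeping to verify that the AP produced by Lemma~\ref{lem: squarefree} satisfies all the hypotheses of Theorem~\ref{thm:reduced stronger} and that the dilation by $t^2$ preserves multiplicative energy.
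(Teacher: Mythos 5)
Your proposal is correct and takes essentially the same approach as the paper, whose own justification (the opening paragraph of Section~\ref{Sec: main thm} together with the Section~\ref{sec:reduction} reductions) is precisely Corollary~\ref{cor:large-a} for $a\geq L\log L$, and Lemma~\ref{lem: squarefree} followed by Theorem~\ref{thm:reduced stronger} with $\epsilon = \tfrac32 - 2\log 2$ for $\tfrac12 L\sqrt{\log L}\leq a < L\log L$. One small point to be slightly more careful about than "$t=O(1)$ rescales proportionally": after dividing $\B_0$ by $t^2$ the inequality $a_5 \geq d_5 L_5$ (and similarly $a_5 < L_5\log L_5$) only survives up to an $O(d)$ additive error — also left implicit in the paper's Step 5 — but this is harmless since the proof of Theorem~\ref{thm:reduced stronger} has slack in these inequalities and one may in any case trim $O(1)$ terms from the enclosing progression.
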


Therefore we may reduce to the case where $\A = \{a+id: 0\leq i < L\}$ with $2dL\le dL + a \leq L\sqrt{\log L}$. Inspired by \cite{FordExtremal18}, we set $A' = \CN_k(\A; \log 4, \beta)$ where parameters are as in Proposition~\ref{Prop: N}. This gives the desired lower bound on $|\A'|$. 
We show that $E_{\times}(\CN_k( \A; \log 4, \beta))$ is small. 

\begin{proposition}\label{thm: energy bound}
 Let $\A = \{a+id:0\leq i < L\}$ be an arithmetic progression with $L\sqrt{\log L} > a > dL > 0$ and $\A' = \CN_k(\A; \log 4, \beta)$ be defined as in \eqref{eqn: N} with $\beta \in \R$. Then for some absolute constant $C$ we have 
\[E_{\times} (\A')  \le CL^{2}(\log L)^{-2+\frac{1}{\log 2}} \cdot  \log \log L\cdot (\log 4)^{2k}2^{2\beta}.  \]
\end{proposition}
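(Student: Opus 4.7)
The plan is to follow the GCD-based parameterization used in the proof of Theorem~\ref{thm:reduced stronger}. For a quadruple $(a_1,a_2;a_3,a_4) \in (\A')^4$ with $a_1 a_2 = a_3 a_4$, set $x_1 = \gcd(a_1, a_3)$ and write $a_1 = x_1 y_1$, $a_3 = x_1 y_2$, $a_2 = x_2 y_2$, $a_4 = x_2 y_1$, where $\gcd(y_1, y_2) = 1$ and, by square-freeness of each $a_i \in \A'$, $\gcd(x_i, y_j) = 1$ for all $i, j$. The hypothesis $\omega(a_i) = k$ forces $\omega(x_1) = \omega(x_2) =: s$ and $\omega(y_1) = \omega(y_2) = k - s$.

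Using the symmetries $x_1 \leftrightarrow x_2$, $y_1 \leftrightarrow y_2$, and the $(x,y)$-interchange, I reduce to bounding $|X|$, where $X$ consists of ordered tuples with $x_1 < x_2$, $y_1 < y_2$, and $x_1 \leq y_1$, so that $E_\times(\A') \leq 2|\A'|^2 + 8|X| \leq 2L^2 + 8|X|$. The diagonal term $O(|\A'|^2)=O(L^2)$ already lies within the claimed bound. From $x_1^2 \leq x_1 y_1 \leq a+dL$ we get $x_1 \leq \sqrt{a+dL}$. For fixed $x_1$, the variables are constrained as in the proof of Theorem~\ref{thm:reduced stronger}: $x_2 \in (x_1, x_1(1+dL/a))$ with $x_2 \equiv x_1 \pmod d$ (an AP of length $\asymp x_1 L/a$), and $y_j \in [a/x_1, (a+dL)/x_1)$ with $y_j \equiv a x_1^{-1} \pmod d$ (APs of length $\asymp dL/x_1$).

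The main analytic tool is Shiu's theorem (Theorem~\ref{lem:Shiu}) applied to the multiplicative function $f(n) = (\log 4)^{\omega(n)}$; together with Theorem~\ref{thm:merten} this yields
\[ \sum_{\substack{n \in (X, X+Y) \\ n \equiv c \pmod d}} (\log 4)^{\omega(n)} \ll \frac{Y}{\phi(d)} (\log X)^{\log 4 - 1}. \]
The choice of weight is dictated by the condition defining $\A'$: on $X$ one has the identity $(\log 4)^{\omega(x_1)+\omega(x_2)+\omega(y_1)+\omega(y_2)} = (\log 4)^{2k}$, which is exactly how the factor $(\log 4)^{2k}$ in the target bound will be produced. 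After relaxing the condition $x_iy_j \in \A'$ to just the AP/congruence constraints and factoring by multiplicativity, I apply Shiu to each of the three inner sums over $x_2$, $y_1$, $y_2$, reducing the problem to estimating a weighted sum over $x_1 \leq \sqrt{a+dL}$ of the form $\sum (\log 4)^{\omega(x_1)} x_1^{-1} (\log x_1)^{\log 4 - 1} (\log(a/x_1))^{2(\log 4 - 1)}$; this is then handled by a dyadic decomposition together with a further application of Shiu within each dyadic piece.

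The main obstacle is obtaining the precise exponent $-2 + 1/\log 2$ of $\log L$: the naive relaxation above overshoots, and the saving must come from exploiting the prime-growth condition $\LL p_j \geq j\log 4 - \beta$ shared by every element of $\A'$. Because each $x_1$ is a product of $\omega(x_1)$ distinct primes drawn from the prime factorization of some $x_1y_1 \in \A'$, this condition forces $x_1 \gtrsim \exp\bigl(e^{-\beta} 4^{\omega(x_1)}/3\bigr)$. This lower bound restricts, at each dyadic scale, the admissible values of $\omega(x_1)$ and is what produces both the correct logarithmic exponent after combining with $(\log 4)^{-2k}$, and the extra factor $2^{2\beta}$; the $\log\log L$ factor then arises from summing the critical dyadic contributions, in the spirit of Ford's treatment in \cite{FordExtremal18}.
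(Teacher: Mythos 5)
Your reduction to bounding $|X|$ via the $\gcd$-parameterization, the observation that $\omega(x_1)=\omega(x_2)$ and $\omega(y_1)=\omega(y_2)$, and the idea of weighting by a multiplicative function whose exponent exploits $\omega(x_iy_j)=k$ and then applying Shiu's theorem, all agree with the paper's proof. Two issues, though, one cosmetic and one fatal.

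The cosmetic one first: to \emph{produce} the factor $(\log 4)^{2k}$ you must write $1 = (\log 4)^{2k}\,(1/\log 4)^{\omega(x_1)+\omega(x_2)+\omega(y_1)+\omega(y_2)}$, so the multiplicative weight fed into Shiu should be $(1/\log 4)^{\omega(n)}$, with Mertens exponent $1/\log 4 - 1 < 0$, not $(\log 4)^{\omega(n)}$ with exponent $\log 4 - 1$. As written, your Shiu display pulls out $(\log 4)^{-2k}$, which is the wrong sign relative to the target.

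The fatal gap is in how you propose to recover the lost logarithms. After the single-weight Shiu bound, the dyadic sum over $x_1\in(e^t,e^{t+1}]$ contributes $\asymp t^{2(1/\log 4 -1)}$ per scale, and $\sum_t t^{2/\log 4 - 2}\asymp (\log L)^{\,1-(2-1/\log 2)}$, which is a genuine polynomial-in-$\log L$ overshoot (roughly $(\log L)^{0.44}$) against the target's $\log\log L$. Your proposed repair --- the lower bound $x_1 \gtrsim \exp(e^{-\beta}4^{\omega(x_1)})$, equivalently $\omega(x_1)\le \frac{\LL x_1 + \beta}{\log 4}$ --- cannot close this gap. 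Under the weight $(1/\log 4)^{\omega(x_1)}$, the value of $\omega(x_1)$ for $x_1\asymp e^t$ is already concentrated near $\frac{\LL x_1}{\log 4}=\frac{\log t}{\log 4}$, with Poisson-like fluctuations of size $\sqrt{\log t}$; your constraint only truncates at $\frac{\log t + \beta}{\log 4}$, i.e.\ it removes a tail of relative size $O(\beta/\sqrt{\log t})$ at each scale, and thus does not change the $t$-exponent at all. The constraint you need is not on $\omega(x_1)$ but on the number of \emph{small} prime factors of $y_j$: since $\omega(x_iy_j,t)\le f(t)=\frac{\LL t+\beta}{\log 4}$ and $\omega(x_i,2x_1)=\omega(x_i)$, one has $\omega(y_j,2x_1)\le f(2x_1)-\omega(x_1)$, which for typical $\omega(x_1)$ forces $y_j$ to have only $O(\beta)$ prime factors below $2x_1$. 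The paper implements this with a second weight $\xi^{2\omega(\cdot,\,2x_1)}$, $\xi^2=1/2$, multiplied into the $\lambda$-weight; this shifts the dyadic $t$-exponent to exactly $-1$ (hence $\sum_t t^{-1}\asymp \log\log L$) and produces $\xi^{-4\beta}=2^{2\beta}$. Without a constraint of this type on the small primes of $y_j$, a single-weight Shiu argument cannot reach the stated exponent $-2+1/\log 2$.
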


\begin{lemma}\label{lem:random-selection}
For any finite $\A\subseteq \R$, there exists $\A'\subseteq \A$ with $|\A'|\geq \frac{|\A|^3}{2E_\times(\A)}$ and $E_\times(\A') \leq 4|\A'|^2$.
\end{lemma}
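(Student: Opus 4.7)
The plan is to construct $\A'$ by random sampling and then extract a good realization via a first moment argument. Let $N = |\A|$ and $E = E_\times(\A)$. The trivial diagonal contributions $(a, a, b, b)$ and $(a, b, b, a)$ give $E \geq 2N(N-1)$, so when $E = O(N^2)$ the statement is immediate with $\A' = \A$; I therefore focus on $E \gg N^2$. I would include each element of $\A$ in $\A'$ independently with probability $p := N/\sqrt{E} \leq 1$, and write $X := |\A'|$ and $Y := E_\times(\A')$. Then $\E X = pN = N^2/\sqrt{E}$, hence $(\E X)^2 = N^4/E$.

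To estimate $\E Y$, I would decompose the $4$-tuples counted by $E_\times(\A)$ according to the number $j \in \{1, 2, 3, 4\}$ of distinct entries and write $\E Y = \sum_{j=1}^4 p^j E_j$, where $E_j$ is the count of tuples with exactly $j$ distinct entries. Elementary case analysis gives $E_1 = N$ and $E_2 \leq 2N^2 + N$, while trivially $E_4 \leq E$. For $E_3$, the only equality patterns compatible with three distinct entries force one of $\{a_1, a_2\}$ to equal one of $\{b_1, b_2\}$; combined with Cauchy--Schwarz applied to $\sum_{u \in \A} r_{\A \cdot \A}(u^2)$ using $\sum_c r_{\A \cdot \A}(c)^2 = E$, this yields $E_3 = O(\sqrt{NE})$. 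Plugging in $p = N/\sqrt{E}$ and using $E \leq N^3$, each summand $p^j E_j$ is $O(N^4/E) = O((\E X)^2)$; more carefully, $\E Y \leq (3 + o(1))(\E X)^2$ as $N \to \infty$.

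To finish, I would consider the events $A := \{Y \leq \lambda X^2\}$ and $B := \{X \geq c N^3/E\}$ for constants $\lambda, c > 0$ to be chosen. Markov on $A^c$ gives $\E[X^2 \mathbf{1}_{A^c}] \leq \E Y / \lambda$, hence $\E[X^2 \mathbf{1}_A] \geq (1 - (3+o(1))/\lambda) \E X^2$; using $X \leq N$ to pass to first moments and comparing with $\E[X \mathbf{1}_{B^c}] \leq c N^3/E$ gives $\E[X \mathbf{1}_{A \cap B}] > 0$ whenever $\lambda > 3$ and $c$ is taken small enough, yielding the desired $\A'$. The main obstacle is hitting the specific constants $\lambda = 4$ and $c = 1/2$ stated in the lemma: the naive execution above lands $\lambda$ just below $4$ but $c$ noticeably smaller than $1/2$. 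This gap can be closed by several independent routes: optimize over $p = \alpha N/\sqrt{E}$ in $\alpha$; use Chebyshev concentration $X = (1 - o(1)) \E X$ with high probability (valid since $E \leq N^3$) to replace the crude $X \leq N$ passage; or add a short deletion step that removes elements with largest local multiplicative energy $|\{(b, c, d) \in (\A')^3 : ab = cd\}|$, at the cost of only a constant factor in $|\A'|$. In any event, the specific constants $2$ and $4$ are inessential for the application in Theorem~\ref{thm: A energy 86}, where they are absorbed into the $o(1)$ exponent.
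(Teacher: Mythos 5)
Your approach is correct in spirit and would yield the lemma (at least for $|\A|$ large, with the constants $2$ and $4$ replaced by other absolute constants), but it is genuinely different from, and noticeably heavier than, the paper's argument. You decompose $\E E_\times(\A')$ by the number $j$ of distinct coordinates in the quadruple, obtaining $\E Y = \sum_j p^j E_j$, and then need the nontrivial intermediate bound $E_3 = O(\sqrt{|\A| E_\times(\A)})$ via Cauchy--Schwarz; afterwards you intersect the two events $\{Y \le \lambda X^2\}$ and $\{X \ge c\,|\A|^3/E\}$ through separate Markov estimates, which as you observe loses a constant factor that must then be recovered (the Chebyshev-concentration route you sketch does indeed recover it, since $\mathrm{Var}(X)/(\E X)^2 = O(1/\E X) = O(|\A|^{-1/2}) \to 0$). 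The paper instead takes $p = |\A|^2/E_\times(\A)$, bounds the quadruples with at most two distinct entries \emph{pointwise} by $2|\A'|^2$ (a deterministic inequality, so no separate estimate of $E_1, E_2, E_3$ is needed), bounds the remaining quadruples in expectation simply by $p^3 E_\times(\A) = p^2|\A|^2$, and then runs a one-shot first-moment argument on the single random variable $4|\A'|^2 - E_\times(\A')$: its expectation is at least $p^2|\A|^2 > 0$, and any realization achieving this value immediately gives both $E_\times(\A')\le 4|\A'|^2$ and $|\A'| \ge p|\A|/2 = |\A|^3/(2E_\times(\A))$. This avoids the Cauchy--Schwarz bound on $E_3$, the intersection-of-events bookkeeping, and any asymptotics in $|\A|$, and it pins down the stated constants exactly. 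Your proof buys nothing extra here, but the decomposition-by-multiplicity viewpoint is a reasonable alternative and would generalize to settings where a pointwise diagonal bound is unavailable; for the present lemma, the paper's linear-combination trick is the cleaner route. One small further note: the opening reduction ``$E = O(N^2)$ is immediate with $\A' = \A$'' should be stated as $E \le 4N^2 \Rightarrow \A' = \A$ works, so that the remaining case $E > 4N^2$ is cleanly complementary; and the $o(1)$ terms in your $\E Y \le (3+o(1))(\E X)^2$ tend to zero as $N \to \infty$, so small $N$ would need the same direct check.
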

\begin{proof}[Proof of Theorem~\ref{thm: A energy 86} assuming Proposition~\ref{thm: energy bound} and Lemma~\ref{lem:random-selection}]
We may assume that $L$ is sufficiently large and choose $\A'' = \CN_k(\A; \log 4, 1)$ where $k = \left\lfloor\frac{\LL L}{\L 4} -5\sqrt{\L\L L}\right\rfloor -4 = (1+o(1))\frac{\LL L}{\L 4}$. By Proposition~\ref{Prop: N}, we know that $|\A''| \geq {L}{(\log L)^{-\theta-o(1)}}.$ By Proposition~\ref{thm: energy bound}, noting that $(\log 4)^{2k} = (2\log 2)^{(1+o(1))\frac{\LL L}{\log 2}} = (\log L)^{1+\frac{\LL 2}{\log 2}+o(1)}$, we have
\[E_\times(\A'') \leq L^2(\log L)^{-2+\frac{1}{\log 2} + 1+\frac{\LL 2}{\log 2}+o(1)} = L^2(\log L)^{-2\theta+o(1)}.\]
We apply Lemma~\ref{lem:random-selection} to find $\A'\subseteq \A''$ of size at least $\frac{|\A''|^3}{2E_\times(\A'')} \geq L(\log L)^{-\theta-o(1)}$, as desired.
\end{proof}
The proof of Lemma~\ref{lem:random-selection} uses the probabilistic method. A similar application appears in \cite{FordExtremal18}.
\begin{proof}[Proof of Lemma~\ref{lem:random-selection}]
Fix $p = \frac{|\A|^2}{E_\times(\A)}\in (0, 1)$. We choose a random subset $\A'\subseteq \A$ by keeping each element in $\A$ independently with probability $p$. Then we have $\E |\A'|^2 \geq p^2|\A|^2$. Note that for tuples $(a_1, a_2;a_3, a_4)\in (\A')^4$ with $a_1a_2=a_3a_4$, there are at most $2|\A'|^2$ of them with $\{a_1, a_2, a_3, a_4\}$ containing at most $2$ elements. For all other ones, they come from a tuple $(a_1, a_2; a_3, a_4)\in \A^4$ with at least $3$ distinct elements, and they are included in $(\A')^4$ with probability at most $p^3$. Therefore we have
\[\E E_\times(\A')\leq \E 2|\A'|^2 + p^3E_\times(\A) = 2p^2|\A|^2 + p^3E_\times(\A) = 3p^2|\A|^2.\]
This shows that there exists $\A'$ such that $4|\A'|^2 - E_\times(\A')\geq \E[4|\A'|^2 - E_\times(\A')] = p^2|\A|^2$. In particular we know that $|\A'|^2 \geq \frac{p^2|\A|^2}{4} = \frac{|\A|^6}{4(E_\times(\A))^2}$ and $E_\times(\A') \le 4|\A'|^2$. 
\end{proof}

 The proof of Proposition~\ref{thm: energy bound} is similar to that of Theorem~\ref{thm:reduced stronger} but with two undetermined parameters that need to be optimized. 

\begin{proof}[Proof of Proposition~\ref{thm: energy bound}]
By letting the constant $C$ large, we may assume that $L$ is sufficiently large. Similar to the proof of Lemma~\ref{lem:large-a}, we upper bound the size of
    \[X := \{(x_1, x_2;y_1, y_2): x_iy_j\in \A'\;~~\forall \; i, j\in \{1, 2\}, 0 < x_1 < x_2, 0 < y_1 < y_2, x_1 \leq y_1\}.\]
    By the same argument as in Lemma~\ref{lem:large-a} (here we use an extra symmetry between $x$ and $y$), we have that $E_\times(\A') \leq 2|\A'|^2 + 8|X|$. It remains to estimate $|X|$.
    
    Fix $x_1$. As all elements in $\A'$ are congruent to $a$ mod $d$, they are all coprime to $d$. Therefore $x_i$ and $y_j$ are also coprime to $d$. As a result, $x_1y_1\equiv a \equiv x_1y_2 \pmod d$ implies that $x_1$ and $x_2$ are congruent mod $d$. Moreover, $y_1$ and $y_2$ are both congruent to $ax_1^{-1}$ mod $d$. Also note that elements in $\A'$ are included in the interval $[a, a+dL)$, we know that $\frac{x_2}{x_1} = \frac{x_2y_1}{x_1y_1} < 1+\frac{dL}{a}$. Hence also note that $x_2 > x_1$, we know that $x_2\in (x_1, x_1+x_1dL/a)$. Since $x_2\equiv x_1\pmod d$, there is no valid tuple in $X$ for $\frac{x_1L}{a}\le 1$. Thus any valid tuple satisfies $x_1 > a/L$. Similarly, as $x_1y_i\in [a, a+dL)$, we know that $y_i\in [\frac{a}{x_1}, \frac{a}{x_1}+\frac{dL}{x_1})$, and $y_1, y_2$ are congruent mod $d$. By the extra symmetry, we have $x_1 \leq \sqrt{x_1y_1} < \sqrt{a+dL}$.
    
    
    
    The goal is to apply Theorem~\ref{lem:Shiu} to bound the number of choices for $x_i$ and $y_j$. For each fixed $x_1 = x\in (L/a, \sqrt{a+dL})$, we would like to compute the number of tuples $(x_1, x_2; y_1, y_2)\in X$. Let us denote the set of such tuples by $X(x_1)$.
    
    By definition, any $m\in \A'$ satisfies that $\omega(m) = k$ and $\omega(m, t)\leq f(t) := \frac{\LL t}{\log 4} + \beta$ for all $t\in \N$. Here $\omega(m, t) : = \#\{p | m : p\le t \}.$    Note that we have $x_iy_j\in \A'$. As $\A'$ consists of square-free elements having $k$ distinct prime factors, we know that for any $(i, j)$, $k = \omega(x_iy_j) = \omega(x_i)+\omega(y_j)$, and $\omega(x_i, t)+\omega(y_j, t) = \omega(x_iy_j, t)\leq f(t)$. Let $\lambda, \xi\in (0, 1)$ to be determined. Then we have for each $(x_1, x_2; y_1, y_2)\in X_2$,
    \[(\lambda^2)^{\omega(x_1)}(\lambda^2)^{\omega(x_2)}(\lambda^2)^{\omega(y_1)}(\lambda^2)^{\omega(y_2)}\lambda^{-4k} = \lambda^{\omega(x_1y_1)+\omega(x_1y_2) + \omega(x_2y_1) + \omega(x_2y_2) - 4k}  = 1,\]
    and for any $t\in \N$,
    \[(\xi^2)^{\omega(x_1, t)}(\xi^2)^{\omega(x_2, t)}(\xi^2)^{\omega(y_1, t)}(\xi^2)^{\omega(y_2, t)}\xi^{-4f(t)} = \xi^{\omega(x_1y_1, t)+\omega(x_1y_2, t) + \omega(x_2y_1, t) + \omega(x_2y_2,t) - 4f(t)}  \geq 1.\]
    For each $x_1$, as $x_2 < \frac{a+dL}{a}x_1 < 2x_1$, if we choose $t = 2x_1$, then $\omega(x_i, t) = \omega(x_i)$ for $i=1, 2$. Hence
    \begin{equation*}
        \begin{split}
        |X(x_1)| \leq  \lambda^{-4k+2\omega(x_1)}\xi^{-4f(2x_1)+2\omega(x_1)} \sum_{\substack{x_1 < x_2 < x_1\frac{a+dL}{a}\\ x_2\equiv x_1\Mod d}}\left(\lambda^2\xi^2\right)^{\omega(x_2)}\left(\sum_{\substack{\frac{a}{x_1}\leq y < \frac{a+dL}{x_1}\\ y\equiv ax_1^{-1}\Mod d}}\left(\lambda^2\right)^{\omega(y)}\left(\xi^2\right)^{\omega(y, 2x_1)}\right)^2,
    \end{split}
    \end{equation*}
First we try to estimate the inner summation by Theorems~\ref{lem:Shiu} and \ref{thm:merten}. To see that Theorem~\ref{lem:Shiu} is applicable with $\alpha = \beta = 1/2$ and $A_1 = A_2 = 1$, note that we have $x' = \frac{a+dL}{x_1}$ and $y' = \frac{dL}{x_1}$, and clearly $y' > x'$. Since $x_1\le \sqrt{a+dL} \le \sqrt{2L\log L}$, we have $\frac{(y')^2}{x'} = \frac{d^2L^2}{x_1(a+dL)}\le \frac{d^2L^2}{(2L\log L)^{3/2}} > 1$ for $L$ sufficiently large. Meanwhile we have $\frac{y'}{d^2} = \frac{dL}{d^2x_1} \le \frac{dL}{d^2\sqrt{2L\log L}} > 1$ as $d < \sqrt{\log L}$ when $L$ sufficiently large. Note that $g(y) = \left(\lambda^2\right)^{\omega(y)}\left(\xi^2\right)^{\omega(y, 2x_1)}$ takes value $g(p) = \lambda^2$ for $2x_1 < p < \frac{a+dL}{x_1}$ and $g(p) = \lambda^2\xi^2$ for $p < 2x_1$. Therefore by Theorem~\ref{lem:Shiu},
\[\begin{split}\sum_{\substack{\frac{a}{x_1}\leq y < \frac{a+dL}{x_1}\\ y\equiv ax_1^{-1}\Mod d}}\left(\lambda^2\right)^{\omega(y)}\left(\xi^2\right)^{\omega(y, 2x_1)} &\ll \frac{\frac{dL}{x_1}}{\phi(d)}\frac{1}{\log\frac{a+dL}{x_1}}\exp\left(\sum_{p \leq 2x_1} \frac{\lambda^2\xi^2-\lambda^2}{p} + \sum_{p < \frac{a+dL}{x_1}}\frac{\lambda^2}{p}\right) \\
&\ll \frac{dL}{x_1\phi(d)}(\log(a+dL))^{-1+\lambda^2}(\log x_1)^{\lambda^2\xi^2-\lambda^2}.\end{split}\]
Here in the second line we use Theorem~\ref{thm:merten} to get $\sum_{p<\frac{a+dL}{x_1}}\frac{\lambda^2}{p} \leq \lambda^2\LL\frac{a+dL}{x_1}+O(1)$, and simiarly $\sum_{p<2x_1}\frac{\lambda^2\xi^2 - \lambda^2}{p} \leq (\lambda^2\xi^2 - \lambda^2)\LL 2x_1+O(1) = (\lambda^2\xi^2 - \lambda^2)\LL x_1 + O(1)$. 

For the outer summation, we argue that Theorem~\ref{lem:Shiu} is applicable when $x_1 > \frac{ea^2}{L^2}$. In this case we have $x' = x_1\frac{a+dL}{a} > y' = x_1\frac{dL}{a}$. Moreover $\frac{(y')^2}{x'} = \frac{x_1d^2L^2}{(a+dL)a} > \frac{x_1d^2L^2}{2a^2} > 1$ when $x_1 = \frac{2ea^2}{L^2}$. At the same time, we have $\frac{y'}{d^2} = \frac{x_1L}{ad} > \frac{ea}{dL} > 1$. Hence Theorem~\ref{lem:Shiu} is applicable with $\alpha=\beta=1/2$, so
\begin{equation}\label{eqn:prop6.2-outersum}
    \sum_{\substack{x_1 < x_2 < x_1\frac{a+dL}{a}\\ x_2\equiv x_1\Mod d}}\left(\lambda^2\xi^2\right)^{\omega(x_2)} \ll \frac{x_1\frac{dL}{a}}{\phi(d)}\left(\log \frac{x_1(a+dL)}{a}\right)^{-1+\lambda^2\xi^2} \leq \frac{x_1dL}{a\phi(d)}(\log x_1)^{-1+\lambda^2\xi^2}.
\end{equation}
Here in the second inequality we use Theorem~\ref{thm:merten} to get $\sum_{p<x_1\frac{a+dL}{a}}\frac{\lambda^2\xi^2}{p} \leq \lambda^2\xi^2\LL\frac{x_1(a+dL)}{a}+O(1)$. Let $X_1$ be the collection of tuples in $X$ with $x_1 < \frac{ea^2}{L^2}$, and $X_2 = X\setminus X_1$. Summing over $x_1 > ea^2/L^2$, noting that $\log L < \log(a+dL) \leq \log(2L^2) \leq 3\log L$ and that $\xi^{-4f(2x_1)} = \xi^{-4\beta}\xi^{-4\frac{\LL 2x_1}{\log 4}}\ll \xi^{-4\beta}(\L x_1)^{\frac{2\log 1/\xi}{\log 2}}$, we have
\[|X_2|\ll \lambda^{-4k}\xi^{-4\beta}\sum_{\frac{ea^2}{L^2} < x_1 < \sqrt{a+dL}}\frac{d^3L^3}{ax_1\phi(d)^3}(\log x_1)^{\frac{2\log 1/\xi}{\log 2}-1+3\lambda^2\xi^2-2\lambda^2}(\log L)^{-2+2\lambda^2}(\lambda^2\xi^2)^{\omega(x_1)}.\]
We partition integers $x_1$ in the interval $(\frac{ea^{2}}{L^{2}}, \sqrt{a+dL})$ into $x_1\in (e^t, e^{t+1})$ for $\lfloor\log\frac{ea^2}{L^2}\rfloor \leq t \leq \lfloor\log\sqrt{a+dL}\rfloor$. Note that by our choice $t\geq \lfloor\log\frac{ea^2}{L^2}\rfloor \geq 1$. In each interval we have
\[\begin{split}&\sum_{e^t < x_1 < e^{t+1}}\frac{d^3L^3}{ax_1\phi(d)^3}(\log x_1)^{\frac{2\log 1/\xi}{\log 2}-1+3\lambda^2\xi^2}(\log L)^{-2+2\lambda^2-2\lambda^2\xi^2}(\lambda^2\xi^2)^{\omega(x_1)}\\
[e^t<x_1<e^{t+1}] \ll & \frac{d^3L^3}{ae^t\phi(d)^3}t^{\frac{2\log 1/\xi}{\log 2}-1+3\lambda^2\xi^2-2\lambda^2}(\log L)^{-2+2\lambda^2}\sum_{e^t < x_1 < e^{t+1}}(\lambda^2\xi^2)^{\omega(x_1)}\\
[\mbox{by Theorem}~\ref{lem:Shiu}] \ll & \frac{d^3L^3}{ae^t\phi(d)^3}t^{\frac{2\log 1/\xi}{\log 2}-1+3\lambda^2\xi^2-2\lambda^2}(\log L)^{-2+2\lambda^2} \cdot e^t(e-1)(t+1)^{-1+\lambda^2\xi^2}\\
 \ll & \frac{d^3L^3}{a\phi(d)^3}t^{\frac{2\log 1/\xi}{\log 2}-2+4\lambda^2\xi^2-2\lambda^2}(\log L)^{-2+2\lambda^2}.
\end{split}\]
Optimizing the parameters by choosing $\lambda = \frac1{\sqrt{\L 4}}$ and $\xi = \frac1{\sqrt{2}}$, we have
\[\begin{split}|X_2| & \ll \lambda^{-4k}\xi^{-4\beta}\sum_{t = \floor{\log \frac{ea^2}{L^2}}}^{\floor{\log \sqrt{a+dL}}}\frac{d^3L^3}{a\phi(d)^3}t^{\frac{2\log 1/\xi}{\log 2}-2+4\lambda^2\xi^2-2\lambda^2}(\log L)^{-2+2\lambda^2}
\\
& = (\log 4)^{2k}2^{2\beta}\frac{d^3L^3}{a\phi(d)^3} (\log L)^{-2+\frac{1}{\log 2}}\sum_{t = \floor{\log \frac{ea^2}{L^2}}}^{\floor{\log \sqrt{a+dL}}}t^{-1} \\
& \ll L^{2}(\log L)^{-2+\frac{1}{\log 2}} \cdot  \log \log L\cdot (\log 4)^{2k}2^{2\beta},
\end{split}\]
where in the last step we use that $\frac{d^3L^3}{a\phi(d)^3}\leq \frac{d^2}{\phi(d)^3}L^2\ll L^2$. 

It remains to estimate $|X_1|$. Note that \eqref{eqn:prop6.2-outersum} no longer holds, so we replace it by the trivial bound
\[\sum_{\substack{x_1 < x_2 < x_1\frac{a+dL}{a}\\ x_2\equiv x_1\Mod d}}\left(\lambda^2\xi^2\right)^{\omega(x_2)} \le \sum_{\substack{x_1 < x_2 < x_1\frac{a+dL}{a}\\ x_2\equiv x_1\Mod d}}1 \le \frac{x_1L}{a}.\]
Also by the choice of $\lambda$ we have $\lambda^{-4k+2\omega(x_1)}\le \lambda^{-4k} = (\log 4)^{2k}$ and $\xi^{-4f(2x_1)}= \xi^{-4\beta}\xi^{-4\frac{\LL 2x_1}{\log 4}}\ll \xi^{-4\beta}(\log x_1)^{2\frac{\log 1/\xi}{\log 2}} = 2^{2\beta}\log x_1$. As a consequence, we have (again noting that $\log(a+dL) = \Theta(\log L)$)
\[\begin{split}|X_1| &\ll (\log 4)^{2k}2^{2\beta}\sum_{\frac{a}{L} < x_1 < \frac{ea^2}{L^2}}\frac{d^2L^3}{a\phi(d)^2x_1}(\log L)^{-2+2\lambda^2}(\log x_1)^{1 + 2\lambda^2\xi^2-2\lambda^2}\\
& = (\log 4)^{2k}2^{2\beta}L^2(\log L)^{-2+\frac{1}{\log 2}}\frac{d^2}{\phi(d)^2}\frac{L}{a}\sum_{\frac{a}{L}< x_1 < \frac{ea^2}{L^2}} \frac{(\log x_1)^{1 + \frac{1}{\log 4}}}{x_1}.\end{split}\]
Note that we have
\[\sum_{\frac{a}{L}< x_1 < \frac{ea^2}{L^2}} \frac{(\log x_1)^{1 + \frac{1}{\log 4}}}{x_1} \ll \left(\log \frac{ea^2}{L^2}\right)^{2+\frac{1}{\log 4}} \ll \sqrt{a/L}.\]
Moreover we have $\frac{d^2}{\phi(d)^2}\sqrt{\frac{L}{a}} \leq \frac{d^\frac32}{\phi(d)^2} \ll 1$. We conclude that $|X_1| \ll L^2(\log L)^{-2+\frac{1}{\log 2}}(\log 4)^{2k}2^{2\beta}$. Combining with the previous estimate on $|X_2|$, 
 we have the desired upper bound $|X| = |X_1|+|X_2| \ll L^{2}(\log L)^{-2+\frac{1}{\log 2}} \cdot  \log \log L\cdot (\log 4)^{2k}2^{2\beta}$.
\end{proof}

\section{Concluding remarks}\label{sec:concluding}

As we mentioned in the introduction, the best bound we can get in Theorem~\ref{cor:  AA 86} is the following.
\begin{theorem}\label{thm: best}
Let $\A\subseteq\Z$ be a finite arithmetic progression and $\theta = 1- \frac{1+\log \log 4}{\log 4}$. Then 
\[|\A \cdot \A| \geq \frac{ {|\mathcal A|^2}}{{(\log |\mathcal A|)^{2\theta} (\log \log |\A|)^{7+o(1)}  }}. \]
\end{theorem}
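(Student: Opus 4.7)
The plan is to replay the proof of Theorem~\ref{thm: A energy 86} using a sharper form of Proposition~\ref{Prop: N} and to apply Lemma~\ref{lem: CS}(\ref{item:CS-1}) directly, bypassing the random-selection step of Lemma~\ref{lem:random-selection}.

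First, using the reductions of Section~\ref{sec:reduction} together with Proposition~\ref{prop:reduced-for-AP}, one reduces to the case $\A = \{a+id : 0 \leq i < L\}$ with $0 < dL < a \leq \tfrac{1}{2} L\sqrt{\log L}$ and $\gcd(a,d)=1$; each reduction costs only a constant factor in both $|\A|$ and in $E_\times$.

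The main input is a strengthened form of Proposition~\ref{Prop: N}, proved in Appendix~\ref{sec:Smirnov-stronger}: for a suitable $k = \frac{\log\log L}{\log 4} + O(1)$ and $\beta = O(1)$, one has
\[
|\A''| \;:=\; N_k(\A;\,\log 4,\,\beta) \;\gg\; \frac{L}{(\log L)^\theta (\log\log L)^{c_1}}
\]
for some explicit absolute constant $c_1$. The key refinement over Lemma~\ref{lem:sum-of-reciprocal} is to replace the crude dyadic blocking of primes by a more careful count, following the approach of Ford \cite{Ford08, FordExtremal18} in the classical setting $\A=[N]$: this preserves the $(\log\log L)^{-O(1)}$ factor coming from Stirling's formula instead of absorbing it into an amorphous $(\log L)^{-o(1)}$ loss. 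The uniformity provided by the Siegel--Walfisz theorem (Theorem~\ref{thm:SW}) and Shiu's estimate (Theorem~\ref{lem:Shiu}) allows this argument to carry over to an arbitrary arithmetic progression, exactly as set up in Section~\ref{sec: Smirnov}.

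Feeding $\A''$ into Proposition~\ref{thm: energy bound} with this choice of $k$ and $\beta$, and using $(\log 4)^{2k} \asymp (\log L)^{1 + \log\log 2/\log 2}$ together with the identity $-2 + \tfrac{1}{\log 2} + 1 + \tfrac{\log\log 2}{\log 2} = -2\theta$, yields
\[
E_\times(\A'') \;\ll\; L^2 (\log L)^{-2\theta} \log\log L.
\]
Lemma~\ref{lem: CS}(\ref{item:CS-1}) then gives
\[
|\A \cdot \A| \;\geq\; \frac{|\A''|^4}{E_\times(\A'')} \;\gg\; \frac{L^2}{(\log L)^{2\theta} (\log\log L)^{4c_1 + 1}}.
\]
The sharper $N_k$ bound from Appendix~\ref{sec:Smirnov-stronger} is calibrated so that $4c_1 + 1 \leq 7 + o(1)$, which yields the theorem. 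The main obstacle is the strengthened form of Proposition~\ref{Prop: N}: adapting the $\A=[N]$ arguments of \cite{Ford08, FordExtremal18} requires maintaining careful uniformity in the arithmetic-progression parameters $a, d$ throughout the combinatorial count while retaining the sharp Stirling constants; once this is in hand, the remainder of the argument above is routine bookkeeping of $\log\log L$ factors.
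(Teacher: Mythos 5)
Your proposal is correct and matches what the paper actually needs, including the key observation that one should \emph{bypass} Lemma~\ref{lem:random-selection} and apply Lemma~\ref{lem: CS}\eqref{item:CS-1} directly to $\A''=\CN_k(\A;\log 4,\beta)$. Indeed, if one followed the paper's prose literally and routed through the random-selection lemma, the chain $|\A'|\geq |\A''|^3/(2E_\times(\A''))$, $E_\times(\A')\ll|\A'|^2$, $|\A\cdot\A|\gg|\A'|^2$ yields $|\A\cdot\A|\gg |\A''|^6/E_\times(\A'')^2$; plugging in Proposition~\ref{A Prop: N} with exponent $c_1=3/2$ and Proposition~\ref{thm: energy bound} gives only $(\LL|\A|)^{-6c_1-2}=(\LL|\A|)^{-11}$ up to $\LLL$ powers. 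Your direct application gives $|\A''|^4/E_\times(\A'')$, i.e.\ $(\LL|\A|)^{-4c_1-1}=(\LL|\A|)^{-7}$, which is the bound claimed in the theorem; so the bypass is not merely a simplification but is required to hit the stated exponent, and you correctly identified this.

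One small inaccuracy worth flagging: you take $\beta=O(1)$, whereas Proposition~\ref{A Prop: N} and Lemma~\ref{A lem:sum-of-reciprocal} require $\beta=\gamma\,\LLLL L$ for a sufficiently large absolute constant $\gamma$. The reason is that the Mertens sum restricted to primes $p\nmid d$ incurs an error $\sum_{p\mid d}1/p = O(\LLLL x)$, which shifts $\LL\lambda_j$ by up to $O(\LLLL x)$ relative to $j$; the slack $\beta$ in the condition $\LL p_j\geq j\log 4-\beta$ must absorb this shift, so $\beta=O(1)$ is not enough. The only downstream effect is that $2^{2\beta}$ in Proposition~\ref{thm: energy bound} becomes $(\LLL L)^{O(1)}$ rather than $O(1)$, which is harmlessly absorbed into the $(\LL|\A|)^{o(1)}$ factor of the theorem. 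With that correction, your accounting of the $\LL$ exponent is exactly right: $c_1=3/2$ and $4c_1+1=7$.
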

The proof of Theorem~\ref{thm: best} is identical to the proof of Theorem~\ref{cor:  AA 86} but we need a stronger estimate on the set $\CN_k(\A; \log 4, \beta)$ for $k$ with slightly larger size than the choice in Proposition~\ref{Prop: N}. The stronger estimate requires ideas from the generalized Smirnov statistics \cite{Fordstats07} which is deferred to Appendix~\ref{sec:Smirnov-stronger}.

We make the following conjecture on the sharp lower bound in Theorem~\ref{cor:  AA 86}.
\begin{conjecture}
Let $\A $ be a finite arithmetic progression in integers. Then for $|\A|$ sufficiently large,
\[|\A\cdot\A| \gg \frac{|\A|^2}{(\L |\A|)^{2\theta}(\LL |\A|)^{3/2}}.\]
\end{conjecture}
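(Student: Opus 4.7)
The conjecture matches Ford's sharp asymptotic \eqref{eqn: ford} for $[N]$ uniformly over all APs, and its proof amounts to shrinking the $(\LL |\A|)^{7+o(1)}$ loss in Theorem~\ref{thm: best} down to the sharp $(\LL |\A|)^{3/2}$. The plan is to sharpen the multiplicative energy argument of Section~\ref{Sec: main thm} at every step to Ford-level precision, uniformly in the AP parameters $(a,d)$.

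As in Section~\ref{Sec: main thm}, I would first reduce to $\P = \{a+id:0\le i < L\}$ with $2dL \le a+dL \le L\sqrt{\L L}$; in the complementary range, $E_\times(\A') \ll |\A|^2$ by Theorem~\ref{thm:reduced stronger} and Proposition~\ref{prop:reduced-for-AP}, which is strictly better than the target. Inside this window, the candidate subset remains $\A' = \CN_k(\A; \log 4, \beta)$ with $k \approx \frac{\LL L}{\L 4}$, but $k$ and $\beta$ now need to be finely tuned with no $o(1)$ slack. Two ingredients must be upgraded simultaneously. First, a \emph{sharpened size bound}
\[
|\CN_k(\A; \log 4, \beta)| \gg L\,(\L L)^{-\theta}\,(\LL L)^{-c_1}
\]
with an explicit $c_1$, uniform in $(a,d)$; for $\A=[N]$ such bounds follow from Ford's generalized Smirnov statistics \cite{Fordstats07, FordExtremal18}, and in the AP setting one should combine Theorem~\ref{lem:Shiu} with a weighted Selberg--Delange approach so that the saving $(\LL L)^{-c_1}$ is uniform in $d$ and $a$. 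Second, a \emph{sharpened energy bound}
\[
E_\times\!\bigl(\CN_k(\A;\log 4, \beta)\bigr) \ll L^{2}\,(\L L)^{-2\theta}\,(\LL L)^{c_2},
\]
where the parasitic $\LL L$ produced by the dyadic sum $\sum_t t^{-1}$ in Proposition~\ref{thm: energy bound} is absorbed by exploiting the Smirnov ordering constraint inside $\CN_k$: the values of $x_1$ that contribute non-trivially to $X_2$ have $(\LL p_j(x_1))_j$ confined to a narrow tube around the line $j\log 4$, restricting the effective range of $x_1$ to $O(1)$ dyadic scales rather than $\LL L$. Pairing the two bounds through $|\A\cdot\A| \ge |\A'|^{4}/E_\times(\A')$ and optimizing $k,\beta$ so that $4c_1 + c_2 = 3/2$ would yield the conjectured exponent.

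The main obstacle is that Ford's proof of \eqref{eqn: ford} rests on a tight upper/lower-bound match driven by a Brownian-bridge limit theorem for the normalized prime-factor process $(\LL p_j(n))_j$ of a random integer, together with a delicate ``anatomy of integers'' analysis specific to the geometry of $[N]$. Porting this machinery uniformly over AP parameters $(a,d)$---in particular handling the residue class modulo $d$ and the shift $a$ without losing additional $\LL$ factors in the Shiu or Selberg--Delange estimates---requires AP-analogues of Ford's sharp anatomy results that are not currently available. Moreover, even for $\A=[N]$, the Cauchy--Schwarz-via-energy route of Lemma~\ref{lem: CS} is intrinsically lossy and does not by itself recover the constant in \eqref{eqn: ford}, so if the energy method saturates at some $(\LL|\A|)^{3/2+\Omega(1)}$ barrier, one would have to bypass Lemma~\ref{lem: CS} entirely and count $|\A\cdot\A|$ directly via a refined union bound over ordered prime-factor patterns; I would keep this as a fallback approach.
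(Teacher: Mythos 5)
The statement you set out to prove is not proved anywhere in the paper: it is stated as an open conjecture in Section~\ref{sec:concluding}, and the strongest result the authors actually establish is Theorem~\ref{thm: best}, which carries a $(\LL|\A|)^{7+o(1)}$ loss rather than the conjectured $(\LL|\A|)^{3/2}$. Your text is a research plan rather than a proof: both of its key ingredients --- the sharpened size bound with exponent $c_1$ and the sharpened energy bound with exponent $c_2$, uniformly in the parameters $(a,d)$ --- are left unproved, and you yourself acknowledge that the required AP-analogues of Ford's anatomy-of-integers results are not available. So there is no complete argument to check, and the proposal cannot be accepted as a proof of the conjecture.

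Moreover, the main route you propose is quantitatively blocked, not merely difficult. With $\A'=\CN_k(\A;\log 4,\beta)$ at the critical $k\approx\frac{\LL L}{\log 4}$, the true order of $|\A'|$ is at most $L(\L L)^{-\theta}(\LL L)^{-1/2}$ even before imposing the Smirnov ordering condition (this $1/\sqrt{k}$ is the Sathe--Selberg/Stirling factor), and with the ordering constraint it drops to order $(\LL L)^{-3/2}$, which is exactly what Proposition~\ref{A Prop: N} reflects. Hence $c_1\ge 1/2$ unavoidably (and $c_1=3/2$ for the set you actually use), so $4c_1+c_2\ge 2>3/2$ no matter how you tune $k$ and $\beta$; your optimization step ``$4c_1+c_2=3/2$'' cannot be realized through the Cauchy--Schwarz bound $|\A\cdot\A|\ge|\A'|^4/E_\times(\A')$ with any subset of this type. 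This is precisely why the authors remark immediately after the conjecture that they believe it cannot be obtained by only estimating the multiplicative energy of a subset. Your fallback --- bypassing Lemma~\ref{lem: CS} and counting $|\A\cdot\A|$ directly via an AP analogue of Ford's sharp multiplication-table machinery --- is the realistic route, but it is not carried out in your proposal and is exactly the open problem the conjecture encodes.
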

This conjecture, if true, would be tight up to a constant factor by considering $\A = [N]$. We believe that this cannot be done by only estimating the multiplicative energy of a subset.

We also extend Conjecture~\ref{conj: ER} to the $h$-fold product case.
\begin{conjecture}
Let $\A$ be a set of integers and $\A^h := \{a_1\cdots a_h:a_i\in \A,\;\forall\;1\leq i\le h\}$. If $|\A+\A|\ll |\A|$. Then
\[|\A^h| \geq |\A|^h(\log |\A|)^{-h\log h+h-1-o(1)}.\]
\end{conjecture}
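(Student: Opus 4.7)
The plan is to extend the strategy of Theorems~\ref{cor: AA2log2-1} and~\ref{thm: A energy 2log2-1} (the $h=2$ case) to general $h$ by working with the natural $h$-fold multiplicative energy
\[E_\times^{(h)}(\A) := |\{(a_1,\ldots,a_h,b_1,\ldots,b_h)\in \A^{2h}: a_1\cdots a_h = b_1\cdots b_h\}|.\]
By Cauchy--Schwarz applied to the representation function $r(m)=|\{(a_1,\ldots,a_h)\in\A^h: a_1\cdots a_h=m\}|$, we have $|\A^h|\ge |\A|^{2h}/E_\times^{(h)}(\A)$. Hence it suffices to find a subset $\A'\subseteq \A$ with $|\A'|\gg |\A|$ and $E_\times^{(h)}(\A')\ll |\A|^h(\log |\A|)^{h\log h -h+1+o(1)}$. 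The target exponent $Q(h):=h\log h-h+1$ is exactly the Sathe--Selberg entropy controlling the density of integers $n\le N^h$ with $\Omega(n)=h\log\log N+O(\sqrt{\log\log N})$, which matches the extremal example $\A=[N]$.

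Since $|\A+\A|\ll |\A|$, Freiman's theorem yields that $\A$ is contained in a generalised arithmetic progression of bounded dimension of size comparable to $|\A|$. As in the present paper, the feasible first target is the typical case in which $\A$ is a dense subset of a one-dimensional arithmetic progression $\P$. The reduction steps of Section~\ref{sec:reduction} (making elements positive, dividing by $\gcd(a,d)$, localising to a dyadic piece, eliminating the extremal large-$a$ range by a direct counting generalisation of Lemma~\ref{lem:large-a}, and restricting to square-free elements via Lemma~\ref{lem: squarefree}) should transfer verbatim, with constants now depending on $h$.

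In the essential case we set $\A'$ to consist of those elements of $\A$ with at most $T=h\log\log|\A|+(\log\log|\A|)^{2/3}$ distinct prime factors; Lemma~\ref{lem:omega} together with the Sathe--Selberg envelope ensures $|\A'|\gg |\A|$. For square-free tuples with $a_1\cdots a_h=b_1\cdots b_h$, each prime dividing the common product is assigned to exactly one $a_i$ and one $b_j$, so solutions are parametrised by an $h\times h$ matrix of positive integers $(m_{ij})$ with $a_i=\prod_j m_{ij}$ and $b_j=\prod_i m_{ij}$. This generalises the parametrisation $a_i=x_iy_{\sigma(i)}$ used in Lemma~\ref{lem:large-a} and Proposition~\ref{thm: energy bound}. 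One then bounds $E_\times^{(h)}(\A')$ by summing a weight $\prod_{i,j}\lambda^{\omega(m_{ij})}\xi^{\omega(m_{ij},t_{ij})}$ over the matrix entries, chosen so that the $\Omega$-constraint $\sum_{i,j}\omega(m_{ij})=k$ and the Smirnov-type constraints on $\omega(\cdot,t)$ contribute unit-size factors on admissible tuples. Iterated application of Shiu's Theorem~\ref{lem:Shiu} and Mertens' estimate (Theorem~\ref{thm:merten}) to each of the $h^2$ sums, followed by the analogue of the parameter optimisation $\lambda=1/\sqrt{\log(2h)}$, $\xi=1/\sqrt{h}$ (compare $\lambda=1/\sqrt{\log 4}$, $\xi=1/\sqrt{2}$ in Proposition~\ref{thm: energy bound}), should yield exactly the exponent $Q(h)$.

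The main obstacle is the final step: the solution space is an $h^2$-dimensional matrix rather than the four-dimensional box $(x_1,x_2,y_1,y_2)$, and the multi-variable Shiu-type estimate requires simultaneously controlling $h^2$ sums with prime-factor weights and congruence constraints modulo $d$, while ensuring the weights over the entries of $(m_{ij})$ combine to recover the correct exponent. A naive induction that writes $a_1\cdots a_h=(a_1\cdots a_{h-1})\cdot a_h$ and combines the $(h{-}1)$-fold energy bound with Theorem~\ref{thm: A energy 2log2-1} via H\"older loses sharpness, producing $h\,Q(2)/2=h\log 2-h/2$ instead of $Q(h)$; a direct $h^2$-variable optimisation therefore appears unavoidable. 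Carrying this out rigorously, and extending beyond the one-dimensional dense case to arbitrary sets with $|\A+\A|\ll|\A|$ (which even in the $h=2$ case remains open as Conjecture~\ref{conj: ER}), is what makes the statement a conjecture.
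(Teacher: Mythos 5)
This statement is a \emph{conjecture} in the paper (the final conjecture in Section~\ref{sec:concluding}); the authors offer no proof, only the remark that the lower bound is achieved by taking $\A=\{1\le n\le N:\omega(n)=(1+o(1))\log\log N\}$. There is therefore no paper argument against which to check your proposal, and you correctly frame what you have written as a research plan rather than a proof.

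As a plan, the sketch is sensible and internally consistent with the $h=2$ machinery of the paper: the target exponent $Q(h)=h\log h-h+1$ specialises correctly to $2\log 2-1$ at $h=2$ and matches the Sathe--Selberg density of integers with $h\log\log N+O(\sqrt{\log\log N})$ prime factors; the $h\times h$ matrix parametrisation of square-free solutions to $a_1\cdots a_h=b_1\cdots b_h$ is the right generalisation of the $(x_1,x_2;y_1,y_2)$ parametrisation in Lemma~\ref{lem:large-a}; and your tentative parameter choices $\lambda=1/\sqrt{\log(2h)}$, $\xi=1/\sqrt{h}$ recover the paper's $\lambda=1/\sqrt{\log 4}$, $\xi=1/\sqrt 2$ at $h=2$. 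You also correctly observe that the dense-AP case is the only feasible first target and that the general $|\A+\A|\ll|\A|$ case is open already for $h=2$.

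One caveat: your claim that the naive induction ``loses sharpness, producing $h\log 2-h/2$ instead of $Q(h)$'' is directionally wrong. Since $h\log 2-h/2<h\log h-h+1$ for $h\ge 3$, an energy bound $E_\times^{(h)}(\A)\ll|\A|^h(\log|\A|)^{h\log 2-h/2}$ would yield a \emph{stronger} lower bound on $|\A^h|$ than the conjecture, which contradicts the extremal example. The correct conclusion is that the inequality you are implicitly invoking (something of the form $E_\times^{(h)}\le (E_\times^{(2)})^{h/2}$, or a similar H\"older chain through $E_\times^{(h-1)}$) simply does not hold: the intermediate products $a_1\cdots a_{h-1}$ do not live in an arithmetic progression, so Theorem~\ref{thm: A energy 2log2-1} cannot be applied to them, and there is no cheap way to relate the $h$-fold energy to the $2$-fold one. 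The naive approach fails outright rather than producing a lossy bound. This reinforces your final point that a direct $h^2$-variable Shiu-type estimate appears unavoidable.
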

One way to achieve this lower bound is by choosing $\A = \{1\leq n\leq N:\omega(n) = (1+o(1))\log\log N\}$.

\section*{Acknowledgments}
We would like to thank Yifan Jing and Cosmin Pohoata for helpful discussions about Conjecture~\ref{conj: ER}. We are indebted to Kevin Ford for his helpful comments and corrections to earlier versions of the paper. We are grateful to Kannan Soundararajan for discussions about Shiu's Theorem \cite{Shiu80} and work of Tenenbaum \cite{Ten84} and
Ford \cite{Ford08}. We thank Fernando Xuancheng Shao for pointing out reference \cite{EGM14} after reading an earlier draft of the paper. We appreciate Huy Tuan Pham for his interest in and discussions on the problem. We thank Dimitris Koukoulopoulos for pointing out a mistake in the reference and Marc Munsch for pointing out a missing reference. We also thank the anonymous referee for valuable comments and feedback. Finally, we are indebted to Jacob Fox for encouraging us to write this paper and for carefully reading through earlier drafts of the paper and for useful suggestions.

	\bibliographystyle{amsplain}
	\bibliography{main}{}

\appendix	
	\section{Number of elements with a fixed number of primes factors in arithmetic progressions via generalized Smirnov statistics} \label{sec:Smirnov-stronger}
In this section, we give a stronger estimate on size of
$\CN_k(\A; \alpha, \beta)$, than the bound in Proposition ~\ref{Prop: N} (Section~\ref{sec: Smirnov}). This is the key ingredient in proving Theorem~\ref{thm: best}. Notably, our choice of $k$ here (see Proposition~\ref{A Prop: N}) is slightly larger than the one we used in Proposition~\ref{Prop: N}. The rest proof strategy of Theorem~\ref{thm: best} is identical to the proof of Theorem~\ref{cor:  AA 86}. By applying Proposition~\ref{thm: energy bound} and Lemma~\ref{lem:random-selection} to give corresponding upper bounds on multiplicative energy, an application of Cauchy-Schwartz inequality gives lower bounds on product sets. 

Recall that the set we are interested in Section~\ref{sec: Smirnov} is the following:
\begin{equation}\label{A eqn: N}
    \CN_k(\A; \alpha, \beta) := \{n \in \A: n\textrm{  square-free},\; \omega(n) = k,\; \LL p_j(n) \geq \alpha j-\beta\;~~ \forall\;1\leq j\leq k\}.
\end{equation}
Let $N_k(\A; \alpha, \beta) = |\CN_k(\A; \alpha, \beta)|$.
Our goal is to give a better lower bound on $N_k(\A; \alpha, \beta)$, comparing to Proposition \ref{Prop: N}. The proof uses ideas in \cite{Fordstats07}.

\begin{proposition}\label{A Prop: N}
There exists an absolute constant $\gamma$ such that the following holds. If $\A = \{a+id:0\leq i < L\}$ is an arithmetic progression satisfying that $L$ is sufficiently large and $0 < dL \leq a \leq L\sqrt{\log L}$, and $\gcd(a, d) = 1$, then 
\[N_k(\A; \log 4, \gamma \LLLL L) 
\gg \frac{L}{(\log L)^{\theta}} (\L\L L)^{-3/2} (\L\L\L L)^{-C_0}
\] for $k = \left\lfloor\frac{\LL L}{\L 4}\right\rfloor$ and some fixed $C_0>0$.
\end{proposition}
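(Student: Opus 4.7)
The plan is to follow the two-step counting strategy of Proposition~\ref{Prop: N}, replacing the crude estimate of Lemma~\ref{lem:sum-of-reciprocal} by a sharp lower bound tied to the one-sided Kolmogorov-Smirnov statistic. As in the proof of Proposition~\ref{Prop: N}, I write every $n\in\CN_k(\A;\log 4,\beta)$ as $n=qp_k$ with $q=p_1\cdots p_{k-1}<\sqrt a$ the product of the first $k-1$ prime factors of $n$, and apply Lemma~\ref{lem:primes-in-AP} to the induced arithmetic progression of length $\lfloor L/q\rfloor$ (whose parameters are readily checked to satisfy the required hypotheses, exactly as in Proposition~\ref{Prop: N}) to produce $\gg \frac{dL}{\phi(d)\,q\,\log L}$ admissible choices for the last prime $p_k$. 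The Smirnov inequality $\LL p_k\ge k\log 4-\beta$ is automatic, since $p_k>\sqrt a$ forces $\LL p_k\ge\LL L-O(1)$ while $k\log 4-\beta=\LL L-O(\LLLL L)$. Hence the problem reduces to proving the sharpened variant of Lemma~\ref{lem:sum-of-reciprocal},
$$S_{k-1}(\beta):=\sum_{\substack{p_1<\cdots<p_{k-1}<\sqrt a \\ p_j\nmid d,\;\LL p_j\ge j\log 4-\beta\;\forall j}}\frac{1}{p_1\cdots p_{k-1}}\;\gg\;\frac{(e\log 4)^{k-1}}{(\LL L)^{1/2}}\cdot\frac{\beta^2}{\LL L\cdot(\LLL L)^{C}}$$
for some absolute constant $C>0$.

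The main new ingredient is the probabilistic interpretation of $S_{k-1}(\beta)$. Substituting $v_j=\LL p_j$ and applying Merten's theorem (Theorem~\ref{thm:merten}) converts sums of $1/p$ over primes with $\LL p\in[s,t]$ into Lebesgue integrals $t-s+O(1)$, modulo a multiplicative $1+O(\LLL L/\LL L)$ loss coming from primes $p\mid d$ (whose reciprocal sum is $O(\LLL L/\LL L)$, using $\omega(d)=O(\log d)$ and $d\le L\sqrt{\LL L}$). Up to such errors, $S_{k-1}(\beta)$ is comparable to the volume of $\{(v_1,\ldots,v_{k-1})\in(0,\LL\sqrt a)^{k-1}:v_1<\cdots<v_{k-1},\;v_j\ge j\log 4-\beta\}$. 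Rescaling by $u_j=v_j/\LL\sqrt a$ turns this into $(\LL\sqrt a)^{k-1}/(k-1)!$ times the probability that the order statistics $U_{(1)}<\cdots<U_{(k-1)}$ of $k-1$ i.i.d.\ uniforms on $[0,1]$ satisfy $U_{(j)}\ge j/k-\epsilon$ for all $j$, where $\epsilon=\beta/\LL\sqrt a+O(1/\LL L)\asymp\beta/\LL L$. This is precisely the event $\{D_{k-1}^+\le\epsilon\}$ for the one-sided Kolmogorov-Smirnov statistic, and the Birnbaum-Tingey formula gives $P(D_{k-1}^+\le\epsilon)\sim 2(k-1)\epsilon^2\asymp\beta^2/\LL L$ in the regime $1/k\ll\epsilon\ll 1/\sqrt k$, which is satisfied once $\beta=\gamma\LLLL L$ with $\gamma$ a sufficiently small absolute constant. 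Combined with Stirling $(\LL\sqrt a)^{k-1}/(k-1)!\sim(e\log 4)^{k-1}(\pi\LL L)^{-1/2}$, this yields the displayed lower bound on $S_{k-1}(\beta)$.

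To finish, I multiply by the Siegel-Walfisz factor $dL/(\phi(d)\log L)$, use $d/\phi(d)=O(\LLL L)$, and note that $(e\log 4)^{k-1}=(\log L)^{1-\theta+o(1)}$ by the identity $\theta=1-(1+\log\log 4)/\log 4$; substituting $\beta=\gamma\LLLL L$ produces
$$N_k(\A;\log 4,\gamma\LLLL L)\gg\frac{L\,(\log L)^{-\theta}}{(\LL L)^{3/2}}\cdot\frac{(\LLLL L)^2}{(\LLL L)^{C'}}\gg\frac{L}{(\log L)^\theta(\LL L)^{3/2}(\LLL L)^{C_0}}$$
after absorbing the harmless $(\LLLL L)^2=(\LLL L)^{o(1)}$ factor into the $(\LLL L)^{-C_0}$ in the conclusion.

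The main obstacle is porting the Smirnov argument from \cite{Fordstats07}, which is developed for $\A=[N]$, to a general arithmetic progression. The two required adaptations are: (i) verifying that excluding primes $p\mid d$ perturbs each Mertens factor by only a multiplicative $(1+o(1))$, which follows from $d\le L\sqrt{\LL L}$; and (ii) handling the truncation $q<\sqrt a$ (rather than $q<x$ as in \cite{Fordstats07}), which amounts to conditioning one uniform variable to lie near $1/2$ and remains compatible with the ballot / cycle-lemma estimate underlying the one-sided Smirnov asymptotic. Neither adaptation introduces loss beyond the $(\LLL L)^{O(1)}$ factor already present in the stated bound, so choosing $C_0$ large enough to absorb all the polylog-log errors yields the proposition.
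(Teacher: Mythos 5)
Your overall strategy is the same as the paper's: factor $n = q\cdot p_k$ with $q = p_1\cdots p_{k-1} < \sqrt{a}$, count the last prime via Lemma~\ref{lem:primes-in-AP}, and estimate the remaining reciprocal-sum $S_{k-1}(\beta)$ by interpreting it as a volume and invoking a one-sided Smirnov-type estimate. The paper's implementation is Lemma~\ref{A lem:sum-of-reciprocal} combined with Corollary~\ref{cor:Smirnov} (which in turn follows from Ford's Theorem~\ref{thm:Smirnov}), and your arithmetic recovering the exponent $\theta$ from $(e\log4)^{k-1}$ and the Siegel--Walfisz factor is correct and matches the paper's conclusion.

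However, two of the steps you mark as routine are exactly where the paper's nontrivial work is, and as written they have gaps. First, the passage ``converts sums of $1/p$ over primes with $\LL p\in[s,t]$ into Lebesgue integrals $t-s+O(1)$'' is not enough: you are lower-bounding a sum over \emph{ordered tuples of distinct primes}, and an $O(1)$ per-coordinate error compounded across $k-1\asymp\LL L$ coordinates is exponentially larger than the $(\LLL L)^{-C_0}$ loss you are willing to lose. The paper avoids this by partitioning primes into blocks $\P_j$ chosen so that $\sum_{p\in\P_j}1/p$ is within $O(1/\lambda_j)$ of $1$, reducing $S_{k-1}$ to a sum over block-occupancy vectors $(b_t)$ of $\prod 1/b_t!$, and proving the explicit $\geq \tfrac12$ comparison \eqref{eqn:fix-t} between that sum and the volume; your sketch does not address the distinctness/ordering constraints or the accumulated error. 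Second, the assertion that $\gamma$ should be ``a sufficiently small absolute constant'' is backwards. The budget constraint $q<\sqrt{a}$ is on $\sum_j\log p_j$, not on a single coordinate, and it is enforced in the paper by truncating the ranges to $[0, M-2T]$ with $M = \LL x - C\LLLL x$ and bounding the corrections $V_1(m)$, $V_2(m)$; after this shift one has $N-\alpha n+\beta = (\gamma - C)\LLLL x + O(1)$ in Corollary~\ref{cor:Smirnov}, so $\gamma$ must be \emph{large} enough (larger than the absolute constant $C$) for the Smirnov parameter $w$ to stay positive. ``Conditioning one uniform variable to lie near $1/2$'' does not capture this, and the argument would fail for small $\gamma$.

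In short: the decomposition, the role of the one-sided Smirnov statistic, and the final bookkeeping are all as in the paper, but the two technical pivots --- rigorous conversion of the prime-tuple sum to a volume, and the truncation needed to enforce $q<\sqrt{a}$ (which forces $\gamma$ large, not small) --- are left as heuristics and need the paper's machinery to be made correct.
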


\begin{proof}[Proof of Proposition~\ref{A Prop: N} assuming Lemma~\ref{A lem:sum-of-reciprocal}]
Let $\beta = \gamma \LLLL L$, and we may write \newline $\CN_k = \CN_k(\A; \log 4, \beta)$ as the inputs are clear from the context. To obtain a lower bound, we consider elements $n\in \CN_k$ with $p_1(n)p_2(n)\cdots p_{k-1}(n) < \sqrt{a}$. 

Once we fix such a choice of $(p_1, p_2, \dots, p_{k-1})$ with $p_1\cdots p_{k-1} < \sqrt{a}$ and $\LL p_j \geq j\log 4 - \beta$ for all $1\le j \le k-1$, we count the number of $p_k$ with $p_k > p_{k-1}$ and $p_1p_2\cdots p_k\in \A$. Let $q = p_1p_2\cdots p_{k-1}$. Clearly if $qp\in \A$ for some prime $p$, then $p \geq \frac{a}{q} > \sqrt{a} > q > p_{k-1}$. Moreover we have $\LL p > \LL \sqrt{L} = \LL L - \L 2 > \alpha k - \beta$ when $\beta$ is sufficiently large, so any such choice of prime $p$ would give a valid choice for $p_k$ as defined in \eqref{A eqn: N}. 

For any such fixed choice of $q$, if $\gcd(q, d) > 1$, then any element in $\A$ is not a multiple of $q$, so there is no such choice of $p_k$. 

If $\gcd(q, d) = 1$, we know that $q|a+id$ is equivalent to $i\equiv -ad^{-1}\pmod q$. There is a unique such $i_0$ with $0\leq i_0 < q$. Then the multiples of $q$ in $\A$ are of the form $$\left\{a+(i_0+k'q)d:0\leq k' < \frac{L-i_0}{q}\right\} \supseteq \{a+(i_0+k'q)d:0\leq k' < \lfloor L/q\rfloor\}.$$ The inclusion is because $\frac{L-i_0}{q} > \lfloor L/q\rfloor - 1$.
Hence any element $p\in \A_q := \{a'+k'd:0\leq k' < L'\}$ where $a' = \frac{a+i_0d}{q}$ and $L' = \lfloor L/q\rfloor$ would satisfy $pq\in \A$. It is sufficient to estimate the number of primes from $\A_q$, which itself is an arithmetic progression. As $\gcd(a', d) \leq \gcd(a+i_0d, d) = \gcd(a, d) = 1$, we have $\gcd(a', d) = 1$. Moreover, we know that $dL' \leq \frac{dL}{q} < \frac{a}{q} \leq a'$ and $a' < 10L'\sqrt{\log L'}$ when $L$ is sufficiently large. We also have that $L' \geq \frac{L}{q} - 1 > \frac{L^{\frac12}}{(\L L)^{\frac14}} - 1$ is also sufficiently large. Hence by Lemma~\ref{lem:primes-in-AP}, the number of choices for $p_k$ is at least
\begin{equation}\label{A eqn:  count-last-prime}
    \pi(\A_q) \geq \frac{dL'}{2\phi(d)\L L'} \geq \frac{dL/q}{4\phi(d)\L L}.
\end{equation}
Hence by summing \eqref{A eqn:  count-last-prime} up, we have
\begin{equation}\label{A eqn:  prop-N-target}
    N_k(\A; \log 4, \beta) \geq \frac{dL}{4\phi(d)\log L}\cdot \sum_{\substack{p_1 < \cdots < p_{k-1}, p_1\cdots p_{k-1} < \sqrt{a}\\ p_j\nmid d,\LL p_j \geq j\log 4 - \beta\;\forall\;1\leq j\leq k-1}}\frac{1}{p_1\cdots p_{k-1}}.
\end{equation}
To estimate the sum on the right hand side of \eqref{A eqn: prop-N-target}, we use Lemma~\ref{A lem:sum-of-reciprocal}. Let $\gamma', C'$ be the constants in Lemma~\ref{A lem:sum-of-reciprocal}, and let $(x', k', d', \beta') = (\sqrt{a}, k-1, d, \beta)$. We verify that the assumptions are met. First, as $x' \geq \sqrt{a} \geq \sqrt{L}$, we may assume that $x'$ is sufficiently large by setting $L$ large enough. As $L$ is sufficiently large, we have $L^2 \geq a \geq L$, so $L \geq x' \geq \sqrt{L}$. Hence $\LL x' = \LL L + O(1)$, and
\[k' = k-1 = \frac{\LL L }{\L 4}+O(1) = \frac{\LL x'}{\L 4} + O(1).\]
Also we have $d \leq 10\sqrt{\L L} \leq 10\sqrt{\L (x')^2} = O(\L x')$. By choosing $\gamma = \gamma'$, we have $\beta' = \beta = \gamma\LLLL L \geq \gamma \LLLL x'$. Therefore we may apply Lemma~\ref{A lem:sum-of-reciprocal} with $a$ being polynomial in $L$ to get that there exists a constant $C_0>0$ such that, by recalling the definition of $\theta$,
\[\begin{split}
N_k(\A; \log 4, \beta) & \gg \frac{dL}{\phi(d)\log L}\cdot(\log L)^{\frac{1+ \L\L 4}{\L 4}} (\L\L L)^{-3/2} (\L\L\L L)^{-C_0}\\
& \gg \frac{L}{(\log L)^{\theta}} (\L\L L)^{-3/2} (\L\L\L L)^{-C_0},  
\end{split}
\]
where in the last inequality we use that $\frac{d}{\phi(d)}\geq 1$.
\end{proof}

\begin{theorem}[Theorem 1 in \cite{ford2008sharp}]\label{thm:Smirnov}
Let $\{U_i\}_{i=1}^n$ be independent uniform random variables on $[0, 1]$ and $\mathbf{1}_E$ be the indicator function for event $E$. Uniformly for $n\in \N$ and $u, w > 0$, we have
\begin{equation}\label{eqn:Smirnov-original}
    \Pr\left(\sum_{i=1}^n\mathbf{1}_{U_i \leq t} \leq (n+w-u)t+u\;\forall\; t\in [0, 1]\right) = 1 - e^{-\frac{2uw}{n}} + O\left(\frac{u+w}{n}\right).
\end{equation}
\end{theorem}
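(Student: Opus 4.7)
The plan is to recognize this as a one-sided boundary-crossing question for the empirical distribution function $F_n(t) = \frac{1}{n}\sum_{i=1}^{n}\mathbf{1}_{U_i \leq t}$, then combine the Brownian bridge limit (to identify the leading term) with an exact closed-form identity (to control the error). First I would rewrite the event as $n(F_n(t)-t) \leq u(1-t) + wt$ for all $t \in [0,1]$, so that the centered empirical process must stay below the linear segment joining $(0,u)$ and $(1,w)$. This is the natural setting of the one-sided Smirnov statistic.

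Second, I would identify the heuristic leading term via the Brownian bridge limit. Since $\sqrt{n}(F_n-\mathrm{id})$ converges in distribution to a standard Brownian bridge $B$, the probability in question is heuristically approximated by $\Pr(B(t) \leq \alpha(1-t) + \beta t\;\forall t\in[0,1])$ with $\alpha = u/\sqrt{n}$, $\beta = w/\sqrt{n}$. The classical Doob formula for the Brownian bridge gives this probability as $1-e^{-2\alpha\beta} = 1-e^{-2uw/n}$, matching the main term in the statement.

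Third, to turn this heuristic into a sharp bound with error $O((u+w)/n)$, I would pass to an exact formula for finite $n$. Rewriting the event via order statistics as $U_{(k)} \geq (k-u)/(n+w-u)$ for every $k > u$, and then using the representation $U_{(k)} = S_k/S_{n+1}$ with $S_k = E_1 + \cdots + E_k$ a sum of i.i.d.\ exponentials, the condition becomes a ballot-type inequality
\[
(n+w-k)(E_1+\cdots+E_k) \geq (k-u)(E_{k+1}+\cdots+E_{n+1}).
\]
A cycle-lemma or reflection argument (in the spirit of Daniels--Takács formulas for linear barrier crossing by empirical processes) then yields a closed-form expression for $p_n(u,w)$ as a finite combinatorial sum. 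The final step is a careful asymptotic expansion of that sum via Stirling's formula, which reproduces $1 - e^{-2uw/n}$ as the main term and bounds the remainder by $O((u+w)/n)$.

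The main obstacle is making the error term uniform in all three parameters $n, u, w$, particularly at the boundary of the regimes where the formula is used. In the regime $uw \ll n$, the probability itself is $\Theta(uw/n)$, of the same order as the claimed error, so the expansion of the exact formula must produce real cancellation and not merely a bound. In the opposite regime $u$ or $w \gg \sqrt{n}$, the probability is exponentially close to $1$ and a naive Donsker-type strong approximation (which only gives pointwise error $O(\log n/\sqrt{n})$) is insufficient; one must exploit the precise combinatorial structure of the ballot-problem formula rather than pass through a Gaussian coupling. Striking this balance in both extremes, and merging them into a single uniform estimate, is the technical heart of the proof.
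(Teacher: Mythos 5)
The paper does not prove this statement: it is quoted verbatim as Theorem~1 of Ford's \emph{Sharp probability estimates for generalized Smirnov statistics} (cited as \cite{ford2008sharp}), so there is no internal proof for your outline to be compared against. Your outline is, however, in the spirit of how one would actually prove it, and in particular in the spirit of Ford's argument: you correctly reduce the event to a linear boundary-crossing condition $n(F_n(t)-t)\le u(1-t)+wt$, correctly identify the Brownian bridge limit as the source of the main term $1-e^{-2uw/n}$ via Doob's formula, and correctly flag that a Donsker/KMT coupling cannot deliver a uniform $O((u+w)/n)$ error, so one must instead work from an exact finite-$n$ identity (Birnbaum--Tingey / Daniels--Tak\'acs / cycle-lemma type) for the probability that order statistics clear a linear barrier, then expand that identity uniformly.

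That said, the proposal stops exactly where the real work begins. Two gaps in particular. First, the ``exact closed-form expression for $p_n(u,w)$'' is asserted, not derived: for the two-parameter generalized Smirnov statistic (distinct slopes at $t=0$ and $t=1$, and non-integer $u,w$) the closed form is a nontrivial alternating sum over lattice-path configurations, and getting it right (including the correct range of summation and the treatment of the ``free'' order statistics $U_{(k)}$ with $k\le u$) is itself delicate. Second, the phrase ``careful asymptotic expansion\dots via Stirling's formula'' compresses essentially the entire content of Ford's paper into one clause. As you correctly observe, when $uw\ll n$ the target probability and the claimed error are of the same order $\Theta(uw/n)$, so mere bounds on the terms of the sum are useless --- one needs cancellation up to relative error $O((u+w)/\min(u,w)\cdot 1/n)$ --- and when $u$ or $w$ is $\gg\sqrt{n}$ the terms of the alternating sum become individually large and one must exploit the alternation. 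Making the single estimate $O((u+w)/n)$ cover both regimes simultaneously is the technical heart, and the sketch offers no mechanism for it beyond naming Stirling. So the outline is a correct roadmap but is not a proof; to make it one you would essentially have to reproduce the contents of \cite{ford2008sharp}.
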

\begin{corollary}\label{cor:Smirnov}There exists a sufficiently large constant $C$ such that the following holds. Uniformly for $\beta \geq C\alpha > 0$ and $C \leq \frac{N-\alpha n+\beta}{\alpha} $, we have
        \[\vol\left\{(x_1, \dots, x_n):\begin{matrix}0\leq x_1\leq \cdots \leq x_n\leq N,\\ x_j\geq \alpha j -\beta \;\forall\; 1\leq j\leq n\end{matrix}\right\} \le 3\frac{N^n}{n!}\frac{\beta(N-\alpha n+\beta)}{n\alpha^2}.\]
Moreover, if we further have that $\frac{\beta(N-\alpha n+\beta)}{n\alpha^2}\leq 1$, then
\[\vol\left\{(x_1, \dots, x_n):\begin{matrix}0\leq x_1\leq \cdots \leq x_n\leq N,\\ x_j\geq \alpha j -\beta \;\forall\; 1\leq j\leq n\end{matrix}\right\} \ge \frac{1}{4}\frac{N^n}{n!}\frac{\beta(N-\alpha n+\beta)}{n\alpha^2}.\]
\end{corollary}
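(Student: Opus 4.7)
The plan is to recognize the target volume as (up to a factor $N^n/n!$) the probability that the order statistics of $n$ i.i.d.\ uniform random variables on $[0,1]$ satisfy a lower envelope condition of exactly the shape appearing in Theorem~\ref{thm:Smirnov}, and then read off both bounds from the asymptotic formula \eqref{eqn:Smirnov-original} after carefully choosing the parameters $u,w$.

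First I would rescale by $u_i = x_i/N$, reducing to estimating the volume
\[
V := \vol\left\{(u_1,\dots,u_n):\ 0\le u_1\le\cdots\le u_n\le 1,\ u_j\ge \tfrac{\alpha j-\beta}{N}\ \forall\,1\le j\le n\right\},
\]
so that the target volume equals $N^n V$. Since the order statistics $(U_{(1)},\dots,U_{(n)})$ of $n$ i.i.d.\ Uniform$[0,1]$ random variables have joint density $n!$ on the simplex, $V = \Pr(U_{(j)}\ge (\alpha j-\beta)/N\ \forall j)/n!$. Next, I translate the condition in Theorem~\ref{thm:Smirnov} into an order-statistics condition: $\sum_i \mathbf{1}_{U_i\le t}\le (n+w-u)t+u$ for all $t\in[0,1]$ is equivalent to $U_{(j)}\ge (j-u)/(n+w-u)$ for all $j$. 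Matching coefficients with $(\alpha j-\beta)/N$ forces the choice
\[
u = \beta/\alpha,\qquad w = (N-\alpha n+\beta)/\alpha,
\]
which are both positive and at least $C$ by the hypotheses $\beta\ge C\alpha$ and $(N-\alpha n+\beta)/\alpha\ge C$. In particular $uw/n = \beta(N-\alpha n+\beta)/(n\alpha^2)$, matching the quantity in the corollary.

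Applying Theorem~\ref{thm:Smirnov} gives
\[
n!\,V = 1-e^{-2uw/n} + O\!\left(\tfrac{u+w}{n}\right).
\]
The key numerical observation is that since $\min(u,w)\ge C$, we have $u+w\le 2\max(u,w)\le (2/C)\,uw$, so $(u+w)/n\le (2/C)\cdot uw/n$, and by choosing $C$ sufficiently large the implicit $O$-constant can be absorbed into any desired fraction of $uw/n$. For the upper bound I use $1-e^{-2uw/n}\le 2uw/n$ to get $n!\,V\le 2uw/n + O((u+w)/n)\le 3uw/n$, which multiplied by $N^n/n!$ yields the stated upper bound. For the lower bound, under the extra hypothesis $uw/n\le 1$ we have $2uw/n\le 2$; concavity of $1-e^{-x}$ on $[0,2]$ gives $1-e^{-2uw/n}\ge (1-e^{-2})\,uw/n > (4/5)\,uw/n$, and again absorbing $O((u+w)/n)\le (2/C)\,uw/n$ into a small fraction of this main term (for $C$ large) produces $n!\,V\ge uw/(4n)$, from which the lower bound follows.

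The only real subtlety I anticipate is making the constant $C$ large enough that the error term $O((u+w)/n)$ is genuinely dominated by $uw/n$ uniformly over all admissible $(\alpha,\beta,N,n)$; this is exactly what the bound $u+w\le (2/C)uw$ delivers, so no further work beyond fixing $C$ is needed. Everything else is a bookkeeping exercise once the substitution $u=\beta/\alpha$, $w=(N-\alpha n+\beta)/\alpha$ is identified.
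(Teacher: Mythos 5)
Your proof is correct and follows essentially the same route as the paper: rescale to $[0,1]$, recognize the volume as $1/n!$ times the Smirnov-type probability, match $u=\beta/\alpha$ and $w=(N-\alpha n+\beta)/\alpha$, and absorb the $O((u+w)/n)$ error into a small fraction of $uw/n$ using $\min(u,w)\ge C$. The only cosmetic difference is your use of concavity of $1-e^{-x}$ on $[0,2]$ for the lower bound, where the paper instead uses the elementary inequality $e^{-x}\le 1-x/4$ on $[0,2]$; both give the same conclusion.
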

\begin{proof}
    We may shrink the region by $N$ along each dimension. Now $(N, \alpha, \beta) = (1, \frac{\alpha}{N}, \frac{\beta}{N})$. As the volume of $\{x_i = x_j\}$ is zero, we may assume that they are all distinct without affecting the volume. Then we know that the volume of the desired region is $\frac{1}{n!}$ of the volume of the following region:
    \[R' := \{x_1, \dots, x_n\in [0, 1]^{n}: \textrm{the $j$-th smallest element is at least $(\alpha j - \beta)/N$ for all $1\leq j\leq n$}\}.\]
    As $\vol\{x_j = (\alpha j-\beta)/N\} = 0$ for each $j$, we can rewrite $R'$ as
    \[R'' := \left\{x_1, \dots, x_n\in [0, 1]^{n}: \textrm{the number of elements $\leq t$ is at most $\frac{Nt+\beta}{\alpha}$ for all $t\in [0, 1]$}\right\}\]
    without affecting the volume, i.e. $\vol(R'') = \vol(R')$. Compare it with Theorem~\ref{thm:Smirnov}. Choose $u = \frac{\beta}{\alpha} \geq C$ and $w = \frac{N-\alpha n+\beta}{\alpha} \in [C, n/u]$. We denote the left hand side of \eqref{eqn:Smirnov-original} by $Q_n(u, w)$. We have $\vol(R') = \vol(R'') = Q_n(u, w)$. By Theorem~\ref{cor:Smirnov}, there exists an absolute constant $C_0$ such that
    \[\left|Q_n(u, w) - \left(1-e^{-\frac{2uw}{n}}\right)\right| \leq C_0\frac{u+w}{n}.\]
    Pick $C > 8C_0$. Then $C_0\frac{u+w}{n} \leq \frac{1}{4}\frac{uw}{n}$. Since $1-x\leq e^{-x}$, we conclude that
    \[\vol(R') = Q_n(u, w)\leq 1-e^{-\frac{2uw}{n}}+C_0\frac{u+w}{n} \leq \frac{2uw}{n}+\frac{1}{4}\frac{uw}{n} <3 \frac{uw}{n} = 3\frac{\beta(N-\alpha n+\beta)}{n\alpha^2}.\]
    
    With the extra condition that $\frac{\beta(N-\alpha n+\beta)}{n\alpha^2}\leq $, we have $\frac{uw}{n}\leq 1$. Also note that for $x\in [0, 2]$, $e^{-x}\leq 1-\frac{x}{4}$. Therefore we conclude that, as $\frac{2uw}{n}\in [0, 2]$,
    \[\vol(R') =Q_n(u, w) \geq \frac{uw}{2n} - C_0\frac{u+w}{n} \geq \frac{uw}{4n} = \frac{1}{4}\frac{\beta(N-\alpha n+\beta)}{n\alpha^2}.\]
    Putting back the factor $\frac{N^n}{n!}$, we get the desired statements.
\end{proof}

\begin{lemma}\label{A lem:sum-of-reciprocal}
There exist constants $\gamma$ and $C_0>0$ so that for any $x$ large enough, $k = \frac{\LL x}{\log 4} + O(1)$, $d = O(\L x)$, and $\beta \geq \gamma\LLLL x$, we have
\begin{equation}\label{A eqn:sum-of-reciprocal}
    \sum_{\substack{p_1 < \cdots < p_{k}, p_1\cdots p_{k} < x\\ p_j\nmid d,\LL p_j \geq j\log 4 - \beta\;\forall\;1\leq j\leq k}}\frac{1}{p_1\cdots p_{k}} \ge (\log x)^{\frac{1+ \L\L 4}{\L 4}} (\L\L x)^{-3/2} (\L\L\L x)^{-C_0}
\end{equation}
\end{lemma}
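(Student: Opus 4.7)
The plan is to convert the sum into a $k$-dimensional volume integral via Mertens' theorem, and then apply Corollary~\ref{cor:Smirnov} to lower-bound that volume. First I would substitute $y_j = \LL p_j$, turning the weight $1/p$ into Lebesgue measure $dy$ up to an $O(1/\log p)$-error per cell (by Mertens' theorem, Theorem~\ref{thm:merten}). The restriction $p_j\nmid d$ loses only $O(\LLLL x)$ per coordinate since $\sum_{p\mid d}1/p=O(\LLLL x)$ when $d=O(\log x)$. Dyadically partitioning the $y$-axes and telescoping the Mertens error gives
\[
\text{LHS of \eqref{A eqn:sum-of-reciprocal}} \gtrsim \vol(R), \qquad R:=\Bigl\{(y_1,\dots,y_k):0\le y_1\le\cdots\le y_k,\ y_j\ge j\log 4-\beta,\ \textstyle\sum_j e^{y_j}\le\log x\Bigr\},
\]
with the accumulated discretization loss polynomial in $\LLL x$ and absorbed into the $(\LLL x)^{-C_0}$ slack in the target bound.

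Next I would apply Corollary~\ref{cor:Smirnov} with parameters $(n,N,\alpha,\beta)=(k,k\log 4,\log 4,\beta)$. The hypotheses $\beta\ge C\alpha$ and $\beta^2/(k(\log 4)^2)\le 1$ both hold for $\gamma$ absolute and $x$ large, since $\beta\ge\gamma\LLLL x$ and $k\asymp\LL x/\log 4$. The corollary yields
\[
\vol(R_0) \gtrsim \frac{(k\log 4)^k}{k!}\cdot \frac{\beta^2}{k(\log 4)^2},
\]
where $R_0$ is the Smirnov region (the variant of $R$ with the product constraint replaced by the flat upper bound $y_k\le k\log 4$). Using Stirling together with $(e\log 4)^k = (\log x)^{(1+\LL 4)/\log 4}$ and $k = \LL x/\log 4+O(1)$, this reduces to a constant times $(\log x)^{(1+\LL 4)/\log 4}(\LL x)^{-3/2}\beta^2$, which comfortably exceeds the target $(\log x)^{(1+\LL 4)/\log 4}(\LL x)^{-3/2}(\LLL x)^{-C_0}$ for $\beta\ge\gamma\LLLL x$ and any fixed $C_0>0$.

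The main obstacle is that $R_0\not\subseteq R$: a typical Smirnov tuple $y_j\approx j\log 4$ yields $\sum_j e^{y_j}\approx \tfrac{4}{3}\log x$, barely violating the product constraint. To force membership in $R$, I would intersect $R_0$ with the auxiliary staircase-upper bound $y_j\le j\log 4-c_0$ for a fixed $c_0>\log(4/3)$; every tuple in the resulting doubly-staircased region automatically satisfies $\sum_j e^{y_j}\le e^{-c_0}\sum_{j\le k}4^j<\log x$. Lower-bounding the volume of this doubly-staircased region requires adapting Corollary~\ref{cor:Smirnov} to the two-sided setting, which can be done by a symmetric application of Theorem~\ref{thm:Smirnov} to both the lower- and upper-staircase events; the outcome is that the two-sided volume differs from the Corollary~\ref{cor:Smirnov} output by only an absolute constant factor. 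Combining this with routine bookkeeping for the $y_j\ge 0$ boundary at the smallest indices (where $j\log 4-\beta<0$), the final lower bound matches the target once $C_0$ is chosen large enough.
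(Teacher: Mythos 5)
Your overall strategy matches the paper's at a high level: discretize via Mertens into blocks, convert the sum to a volume, and invoke Corollary~\ref{cor:Smirnov} on a Smirnov-type region. You also correctly diagnose the central obstacle, namely that the Smirnov region $R_0$ does not by itself enforce $\sum_j e^{y_j} \le \log x$. However, your proposed fix has a genuine gap.

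You propose intersecting $R_0$ with the \emph{dense} upper staircase $y_j \le j\log 4 - c_0$ for every $1\le j\le k$, and assert that the resulting volume ``differs from the Corollary output by only an absolute constant factor,'' justified by ``a symmetric application of Theorem~\ref{thm:Smirnov}.'' This step is not valid as stated. Theorem~\ref{thm:Smirnov} is a strictly one-sided Kolmogorov--Smirnov-type estimate: it controls the probability of staying below a single linear boundary. Applying it symmetrically gives separate bounds on the two one-sided events, but the target is the probability of their \emph{intersection}, and the events are correlated. Both one-sided probabilities are of order $\frac{(\LLLL x)^2}{\LL x}\to 0$, so Bonferroni-type inclusion-exclusion gives nothing, and there is no obvious monotone-coupling or FKG reason for the two-sided probability to be within a constant of the one-sided one. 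A two-boundary estimate (a genuine two-sided Smirnov statistic with two finite offsets) is a nontrivial additional lemma that you would need to prove; the paper deliberately avoids it.

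What the paper actually does is structurally different at exactly this point: rather than imposing the full upper staircase, it imposes only \emph{sparse} constraints $x_{mT+1}\ge T+m$ and $x_{k-mT}\le M-T-m$ for $1\le m < k/T$, which still force $\sum_j e^{t_j}\le e^M$ by the direct telescoping computation in \eqref{eqn:t_j-up}. The region is then written as $R = R_0\setminus\bigl(\bigcup_m V_1(m)\cup V_2(m)\bigr)$, and each $V_i(m)$ is a \emph{single} extra condition, so it can be bounded by an elementary volume/binomial estimate, summed geometrically in $m$, and shown to be a small fraction of $\vol(R_0)$. This bypasses the two-sided boundary issue entirely. If you want to pursue your full-staircase intersection, you would have to either prove a quantitative two-sided analogue of Theorem~\ref{thm:Smirnov} with one small offset and one $\Theta(\LLLL x)$ offset, or adopt the paper's sparse-constraint decomposition. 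One further small point: your choice $N = k\log 4$ makes $N-\alpha n + \beta = \beta$ and satisfies the Corollary hypotheses, but it ignores the shift $M = \LL x - C\LLLL x$ needed for the $\sum e^{t_j}\le e^M$ bookkeeping after absorbing the $O(\LLLL x)$ losses from the primes dividing $d$; using the correct $N = M - 2T$ costs only a $(\LLL x)^{O(1)}$ factor, consistent with your claim, but should be made explicit.
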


First let us explain the intuition behind the right hand side of \eqref{A eqn:sum-of-reciprocal}. If we replace the conditions by that $p_j < x$ and $p_j\nmid d$ for all $1\leq j\leq k$, then the sum would be
\[\left(\sum_{p < x, p\nmid d}\frac{1}{p}\right)^k = (\LL x - O(\LLL d))^k = (\LL x)^k(\LL d)^{-O(1)} = (\LL x)^{k-o(1)}.\]
Here we repeat each term on the left hand side of \eqref{A eqn:sum-of-reciprocal} by exactly $k!$ times. Thus we should expect a formula of the form $\frac{(\LL x)^{k-o(1)}}{k!}$. Now we aim to show that, with these extra conditions on the choices of $(p_1, \dots, p_k)$, the sum stays roughly the same.
\begin{proof}Motivated by \cite{Fordstats07}, we partition primes in $\P = \{p:p\nmid d\}$ into consecutive parts. Fix $\lambda_0 = 1.9$, and for $j\geq 1$ let $\lambda_j$ to be the largest prime such that
\[\sum_{p \leq \lambda_j:p\nmid d}\frac{1}{p}\leq j.\]
Let $\P_j = \{p: \lambda_{j-1} < p \leq \lambda_j, p\nmid d\}$. First we can estimate the size of $\lambda_j$. By maximality of $\lambda_j$ we have
\begin{equation}\label{eqn:lambda_j-lower}
    j-1 < j - \frac{1}{\lambda_j} < \sum_{p\leq \lambda_j:p\nmid d}\frac{1}{p} \leq \sum_{p\leq \lambda_j}\frac{1}{p}
\end{equation}
Meanwhile note that $\sum_{p|d}\frac{1}{p} = O(\LL\omega(d)) = O(\LLLL x)$, so we have
\[\sum_{p\leq \lambda_j}\frac{1}{p} \leq \sum_{p\leq \lambda_j:p\nmid d}\frac{1}{p} + \sum_{p|d}\frac{1}{p} \leq j + O(\LLLL x).\]
By Merten's estimate (e.g. see Theorem 1.10 in \cite{Tenenbaum}), we have
\[\sum_{p\leq \lambda_j}\frac{1}{p} = \LL\lambda_j + O(1).\]
Thus we conclude that for some absolute constant $C > 0$,
\[j - C < \LL \lambda_j < j + C\LLLL x.\]
Moreover, we know that by \eqref{eqn:lambda_j-lower} we have
\begin{equation}\label{eqn:P_j-lower}
    \sum_{p\in \P_j}\frac{1}{p} = \sum_{p\leq \lambda_j: p\nmid d}\frac{1}{p} - \sum_{p\leq \lambda_{j-1}:p\nmid d}\frac{1}{p} \geq j-\frac{1}{\lambda_j} - (j-1) = 1-\frac{1}{\lambda_j}.
\end{equation}
 Fix positive integers $t_1\leq t_2\leq  \dots\leq t_k$.

\[\P(t_1, \dots, t_k) := \{(p_1, \dots, p_k) \in \P^{k}: p_1 < \cdots< p_k, \quad p_j \in \P_{t_j}, \quad \forall 1\le j\le k\}.\]
We would like to reduce the summation in \eqref{A eqn:sum-of-reciprocal} over $(p_1, \dots, p_k)$ into summation over $(t_1, \dots, t_k)$. Thus we would like to impose some restrictions on $(t_1, \dots, t_k)$ so that the conditions $p_1\cdots p_k < x$, $p_j\nmid d$, and $\LL p_j \geq j\log 4 - \beta$ are all satisfied. Clearly $p_j\nmid d$ is satisfied for all $j$. Note that we have
\[\LL p_j \geq \LL \lambda_{t_j-1} \geq t_j-1-C,\]
so the condition $\LL p_j\geq j\log 4 - \beta$ is satisfied if we require that $t_j\geq j\log 4 - (\beta - 1 - C)$. Meanwhile, we also need $p_1\cdots p_k < x$. Note that we have
\[\sum_{j=1}^k \L p_j \leq \sum_{j = 1}^k \L \lambda_{t_j} \leq \sum_{j = 1}^k e^{t_j+C\LLLL x}.\]
Hence if we define $M = \LL x - C\LLLL x$, then $p_1\cdots p_k < x$ if
\begin{equation}\label{eqn:t_j-up}
    \sum_{j=1}^k e^{t_j} \leq e^{M}.
\end{equation}
Now we impose the restriction that $t_j$'s are not concentrated at small values or large values: if many $t_j$'s are small, then it is hard to take distinct $p_j$'s from the same $\P_t$; if many of them are large then \eqref{eqn:t_j-up} is disobeyed. Let $T$ be a sufficiently large constant to be determined. We choose our $t_j$ so that they only take values in $[T+1, M-T]$, and $t_{mT+1} \geq T+m+1$ and $t_{k-mT} \leq M-T-m$ for all positive integers $m < \frac{k}{T}$. Let $b_t$ be the number of $1\leq j\leq k$ with $t_j = T+t$ for $1 \leq t \leq M-2T$. As $t_1 \leq \cdots \leq t_k$, the numbers $t_1, \dots, t_k$ are uniquely determined by $b_1,\dots, b_{M-2T}$. Moreover, by our previous restriction, we know that $b_t\leq \min(Tt, T(M-2T-t+1), k)$ for all $1\leq t\leq M-2T$. First we argue that by choosing $T$ sufficiently large, \eqref{eqn:t_j-up} is satisfied. Indeed,
\begin{equation}
    \sum_{j=1}^k e^{t_j} = \sum_{t=1}^{M-2T}b_te^{T+t} \leq \sum_{t=1}^{M-2T}T(M-2T-t+1)e^{T+t}
     = \sum_{s=1}^{M-2T}Tse^{M-T+1-s}
     < Te^{M-T+1}
\end{equation}
which is less than $e^M$ if we take $T > 2$. Now we have
\begin{equation}\label{eqn:fix-t}
\begin{split}\sum_{(p_1, \dots,  p_k)\in \P(t_1, \dots, t_k)}\frac{1}{p_1 \cdots  p_k} &= \prod_{t=1}^{M-2T}\frac{1}{b_t!}\left(\sum_{p_1\in \P_{t+T}}\frac{1}{p_1}\sum_{\substack{p_2\in \P_{t+T}\\p_2\ne p_1}}\frac{1}{p_2}\cdots \sum_{\substack{p_{b_t}\in \P_{t+T}\\
p_{b_t}\ne p_1, \dots, p_{b_t-1}}}\frac{1}{p_{b_t}}\right)\\
& \geq \prod_{t=1}^{M-2T}\frac{1}{b_t!} \left(\sum_{p\in \P_{t+T}}\frac{1}{p} - \frac{b_t-1}{\lambda_{t+T-1}}\right)^{b_t}\\
[\mbox{by } \eqref{eqn:P_j-lower}] \quad & \geq \prod_{t=1}^{M-2T}\frac{1}{b_t!} \left(1 - \frac{b_t}{\lambda_{t+T-1}}\right)^{b_t}\\
[\mbox{by } b_t \leq tT]\quad & \geq \prod_{t=1}^{M-2T}\frac{1}{b_t!}\left(1-\frac{tT}{\exp\exp(t+T-1-C)}\right)^{tT}\\
& \geq \frac{1}{2}\prod_{t=1}^{M-2T}\frac{1}{b_t!}
\end{split}
\end{equation}
where the last inequality is derived by picking $T$ sufficiently large. Omitting the coefficient $1/2$, \eqref{eqn:fix-t} is the volume of the region
\[R(t_1, \dots, t_k) := \left\{(x_1, \dots, x_k)\in \R^k:\begin{matrix}0 \leq x_1 \leq x_2 \leq \cdots \leq x_k\leq M-2T,\\ t_j-T-1 \leq x_j \leq t_j-T\;\forall\;1\leq j\leq k\end{matrix}\right\}.\]
Recall that $t_j$ satisfies the following conditions: $t_j \geq j\log 4 - (\beta - 1 - C)$ for any $1\leq j\leq k$, and $t_{k-mT}\geq T+m+1$ and $t_{k-mT} \leq M-T-m$ for all positive integers $m < \frac{k}{T}$. Therefore we conclude that the union of $R(t_1, \dots, t_k)$ over these choices of $(t_1, \dots, t_k)$ contains the following region
\[R := \left\{(x_1, \dots, x_k)\in \R^k:\begin{matrix} 0\leq x_1 \leq x_2 \leq \cdots \leq x_k\leq M-2T, \\
x_j\geq j\log 4 - (\beta-1-C)\;\forall\; 1\leq j\leq k,\\
x_{mT+1} \geq T+m, x_{k-mT} \leq M-T-m\;\forall\; 1\leq m < k/T\end{matrix}\right\}.\]
For simplicity let $\beta' = \beta - 1 - C$. Let us define
\[R_0 := \left\{(x_1, \dots, x_k)\in \R^k: 0\leq x_1 \leq x_2 \leq \cdots \leq x_k\leq M-2T, 
x_j\geq j\log 4 - \beta'\;\forall\; 1\leq j\leq k\right\},\]
and for $1\leq m < \frac{k}{T}$,
\[V_1(m) = \vol(R_0\cap \{x_{mT+1} < m\});\quad V_2(m) = \vol(R_0\cap \{x_{k-mT} > M-2T-m\}).\]
Then we have $R = R_0\setminus \left(\bigcup_{1\leq m < k/T}V_1(m)\cup V_2(m)\right)$, so
\begin{equation}\label{eqn: vol R}
    \vol(R) \geq \vol(R_0)- \sum_{1\leq m < k/T} V_1(m) - \sum_{1\leq m < k/T}V_2(m).
\end{equation}
First we estimate $\vol(R_0)$. By Corollary~\ref{cor:Smirnov}, we may set $(\alpha, \beta, N, n) = (\log 4, \beta', M-2T, k)$. When $\gamma > C$ and $x$ is sufficiently large, we have $\beta' > \log 4 > 0$ and
\begin{equation}\label{eqn: M-2T}
    (M-2T)- k\log 4 + \beta' = (\gamma - C)\LLLL x + O(1).
\end{equation}
 Hence the assumptions of Corollary~\ref{cor:Smirnov} are met, so
\[\vol(R_0) \gg \frac{(M-2T)^k}{k!}\frac{(\LLLL x)^2}{k}.\]
For each $V_1(m)$, we notice that the first $mT+1$ coordinates contribute volume at most $\frac{m^{mT+1}}{(mT+1)!}$, so
\[  V_1(m)  \leq \frac{m^{mT+1}}{(mT+1)!} \cdot \vol\{ 0\le x_{mT+2}\le \cdots \le x_{k} \le M-2T : x_j \ge \alpha j - \beta' ~~\forall~ mT+2 \le j \le k   \}.\]
By changing the variables, we shall rewrite as $V_1(m) \le  \frac{m^{mT+1}}{(mT+1)!} S_1(m)$ where
\[S_1(m): =  \vol\left\{ \begin{matrix}0\le y_1\le y_2\le \cdots \le y_{k-mT-1} \le M-2T :\\
y_j \ge (j+mT+1)\log 4 - \beta' ~~\forall~ 1 \le j \le k-mT-1  \end{matrix} \right\} . \]
Similarly for $V_2(m)$ we have $V_2(m) \leq \frac{m^{mT+1}}{(mT+1)!} S_2(m)$ where
\[S_2(m): =  \vol\left\{ \begin{matrix}0\le y_1\le y_2\le \cdots \le y_{k-mT-1} \le M-2T :
y_j \ge j\log 4 - \beta' ~~\forall~ 1 \le j \le k-mT-1  \end{matrix} \right\}. \]
By comparing the definition of $S_1(m)$ and $S_2(m)$, we see that $(j+mT+1)\log 4-\beta' > j\log 4 - \beta'$, so the region of $S_1(m)$ is a subset of that of $S_2(m)$. Therefore $S_1(m)\leq S_2(m)$. We next give an upper bound on $S_2(m)$.
Since $\beta \geq \gamma\LLLL x$, the  conditions in Corollary~\ref{cor:Smirnov} is satisfied once $x$ is sufficiently large (which is true as long as $L$ is large enough). By applying Corollary~\ref{cor:Smirnov} 
we have that  
\[ S_2(m) \le  \frac{(M-2T)^{k-1-mT}}{(k-1-mT)!} \cdot \frac{\beta'(M-2T- \log 4 (k-1-mT)+\beta')}{(k-1-mT)\alpha^2}.  \]
By using \eqref{eqn: M-2T}, the upper bound can be simplified as 
\[  \le  \frac{(M-2T)^{k-1-mT}}{(k-1-mT)!} \cdot \frac{\beta' (\gamma-C)\L\L\L\L x  + \beta'(mT+O(1))}{k-1-mT}.  \]
We next use the binomial identity $\binom{k}{mT+1} = \frac{k!}{(mT+1)! (k-1-mT)!}$ to get upper bound
\[ \frac{(M-2T)^{k}}{k!} \cdot  \frac{(mT+1)!}{(M-2T)^{mT+1}}  \cdot \binom{k}{mT+1}\cdot \frac{(\L\L\L\L x + mT)\L\L\L\L x}{k-1-mT},\]
and now we proceed by estimating binomial coefficients to get 
\[S_2(m) \le  \frac{(M-2T)^{k}}{k!} \cdot (mT+1)! \left( \frac{k}{e(M-2T)mT} \right)^{mT+1} \cdot (\L\L\L\L x +mT) \L\L\L\L x .\]
For $i = 1, 2$, as $V_i(m) \leq \frac{m^{mT+1}}{(mT+1)!}S_i(m)\leq \frac{m^{mT+1}}{(mT+1)!}S_2(m)$, we have
\[\begin{split}
    V_i(m) & \le  \frac{(M-2T)^{k}}{k!} \cdot  \left( \frac{k}{e(M-2T)T} \right)^{mT+1} \cdot (\L\L\L\L x +mT) \L\L\L\L x \\
   & \le \frac{(M-2T)^{k}}{k!} \cdot \frac{1}{T^{mT+1}} \cdot (\L\L\L\L x )^{2} (mT)^{2} .  
\end{split}  \]
We next sum up all $V_i(m)$ over $m<k/T$, and compare the sum with $\vol(R_0)$. One has
\[ 
\frac{1}{\vol(R_0)}\sum_{1\leq m < k/T} V_i(m)  \le \sum_{ m \ge 1}   \frac{(mT)^{2}}{T^{mT+1}}  \leq \frac{1}{2^T}
\]
for any $T \geq 8$. Thus, by choosing $T =8$, we conclude that for $i = 1, 2$,
\[ \sum_{m \le k/T} V_i(m) < \frac{1}{4} \vol (R_0). \]
Plugging these two bounds in \eqref{eqn: vol R}, we get 
\[\vol(R) \geq \frac{1}{2}\vol(R_0) \gg  \frac{(M-16)^k}{k!}\frac{(\LLLL x)^2}{k}. \]
Recall that $M = \log\log x - C \LLLL x$. By using Stirling formula, there exists some positive constants $C_1, C_2 > 0$ such that
\[\begin{split}
\vol(R) & \gg \left(\frac{ e \log 4 \L\L x - C_1\L\L\L\L x}{\L\L x}\right)^{\frac{\L\L x}{\log 4}} \frac{(\L\L\L\L x)^{2}}{(\L\L x)^{3/2}}\\
& \gg (\log x)^{\frac{1+ \log \log 4}{\L 4}} \cdot \frac{(\L\L\L x)^{C_2} (\L\L\L\L x)^{2}}{(\L\L x)^{3/2}},
\end{split} \]
which completes the proof. 
\end{proof}

\begin{dajauthors}

\begin{authorinfo}[maxxu]
  Max Wenqiang Xu\\
  Department of Mathematics, \\
  Stanford University, Stanford, CA 94305, USA\\
  maxxu\imageat{}stanford\imagedot{}edu 
\end{authorinfo}

\begin{authorinfo}[yunkunzhou]
  Yunkun Zhou\\
  Department of Mathematics, \\
  Stanford University, Stanford, CA 94305, USA\\
  yunkunzhou\imageat{}stanford\imagedot{}edu \\
\end{authorinfo}

\end{dajauthors}

\end{document}